\documentclass[psamsfonts, reqno]{amsart}
\delimitershortfall-1sp
\newcommand\abs[1]{\left|#1\right|}
\usepackage[margin=1in]{geometry}
\usepackage{tikz-cd}

\usepackage{amssymb,amsfonts,amsmath}
\usepackage[all,arc]{xy}
\usepackage{enumerate}
\usepackage{mathrsfs}
\usepackage{mathtools}
\usepackage{thmtools}
\usepackage{enumitem}
\usepackage{bm}
\usepackage{graphicx}
\usepackage{hyperref}
\usepackage{url}
\usepackage{xcolor}
\PassOptionsToPackage{cmyk}{xcolor}

\newtheorem{thm}{Theorem}[section]

\newtheorem{prop}[thm]{Proposition}
\newtheorem*{prop*}{Proposition}
\newtheorem{lem}[thm]{Lemma}
\newtheorem{conj}[thm]{Conjecture}

\newtheorem*{thm*}{Theorem}
\newtheorem*{lem*}{Lemma}

\theoremstyle{definition}
\newtheorem{defn}[thm]{Definition}

\newtheorem*{defn*}{Definition}
\newtheorem*{fact*}{Fact}

\theoremstyle{remark}
\newtheorem{rem}[thm]{Remark}

\newtheorem*{rem*}{Remark}
\newtheorem*{rems*}{Remarks}

\declaretheoremstyle[notefont=\bfseries,notebraces={}{},%
headpunct={},postheadspace=1em]{mystyle}

\newcommand{\Q}{\mathbf{Q}}
\newcommand{\Z}{\mathbf{Z}}
\newcommand{\R}{\mathbf{R}}
\newcommand{\C}{\mathbf{C}}

\DeclareMathOperator{\res}{res}

\DeclareMathOperator{\Hom}{Hom}
\DeclareMathOperator{\tr}{tr}
\DeclareMathOperator{\Mod}{mod}

\DeclareMathOperator{\Br}{Br}
\DeclareMathOperator{\Gal}{Gal}
\DeclareMathOperator{\ord}{ord}

\DeclareMathOperator{\sgn}{sgn}

\newcommand{\SL}[2]{\mathrm{SL}_{#1}{#2}}
\newcommand{\PSL}[2]{\mathrm{PSL}_{#1}{#2}}
\newcommand{\HilbertSymbol}[3]{\ensuremath{\left({\dfrac{#1,#2}{#3}}\right)}}

\newcommand\mathspace{\mkern\medmuskip}

\renewcommand\Im{\operatorname{Im}}

\makeatletter
\let\@@pmod\pmod
\DeclareRobustCommand{\pmod}{\@ifstar\@pmods\@@pmod}
\def\@pmods#1{\mkern4mu({\operator@font mod}\mkern 6mu#1)}
\makeatother

\makeatletter
\let\c@equation\c@thm
\makeatother

\makeatletter
\newcommand*{\house}[1]{%
	\mathord{%
		\mathpalette\@house{#1}%
	}%
}
\newcommand*{\@house}[2]{%
	\dimen@=\fontdimen8 %
	\ifx#1\scriptscriptstyle\scriptscriptfont
	\else\ifx#1\scriptstyle\scriptfont
	\else\textfont\fi\fi
	3 %
	\sbox0{%
		$#1%
		\vrule width\dimen@\relax
		\overline{%
			\kern2\dimen@
			\begingroup 
			#2%
			\endgroup
			\kern2\dimen@
		}%
		\vrule width\dimen@\relax
		\mathsurround=1.5\dimen@ 
		$%
	}%
	\ht0=\dimexpr\ht0-\dimen@\relax
	\dp0=\dimexpr\dp0+2\dimen@\relax
	\vbox{%
		\kern\dimen@ 
		\copy0 %
	}%
}

\usepackage[backend=biber,style=numeric, doi=false, isbn=false, giveninits=true, url=false]{biblatex}
\renewbibmacro{in:}{}
\addbibresource{./test.bib}

\graphicspath{ {./Images/} }
\DeclareGraphicsExtensions{.pdf,.png}

\title{A conjecture of Chinburg-Reid-Stover for surgeries on twist knots}

\address{\newline Department of Mathematics,\newline Rice University,\newline Houston, TX 77005, USA} 
\email{nicholas.rouse@rice.edu}

\author{Nicholas Rouse}

\date{\today}
\begin{document}

\begin{abstract}
	Associated to a hyperbolic knot complement in $S^3$ is a set of prime numbers corresponding to the residue characteristics of the ramified places of the quaternion algebras obtained by Dehn surgery on the knots. Previous work by Chinburg-Reid-Stover gives conditions on the Alexander polynomial of the knot for this set to be finite. We show that there are infinitely many examples of knots for which this set is infinite, providing evidence for a conjecture of Chinburg-Reid-Stover.
\end{abstract}
\maketitle

\section{Introduction}
When $\Gamma$ is a finitely generated group, one may form its $\SL{2}{\C}$-character variety, which is an algebraic set parametrizing representations $\Gamma \rightarrow \SL{2}{\C}$. Work of Thurston and Culler-Shalen (\cite{CullerShalen}) introduced the character variety to the study of the geometry and topology of compact $3$-manifolds. More recently, Chinburg-Reid-Stover in \cite{CRS} paid particular attention to the arithmetic aspects of the component of the character variety---called the canonical component---containing the character of the faithful discrete representation in the setting that $M$ is a hyperbolic knot complement. More specifically, if we write $C$ for the canonical component and $k(C)$ for its function field, there is a canonically defined quaternion algebra, $A_{k(C)}$, defined over $k(C)$. The geometric content encoded by this object is that it specializes at a character of a hyperbolic Dehn surgery to the quaternion algebra associated to that Kleinian group. \par
It is natural to ask how these quaternion algebras vary through different surgeries on the knot. Chinburg-Reid-Stover show in \cite{CRS} that a condition on the Alexander polynomial of the knot (called condition $(\star)$ in \cite{CRS}) guarantees that there are only finitely many rational primes lying under any finite prime ramifying the specializations of this quaternion algebra. Let us write $S$ for this set of rational primes.  Let us define $S_{D} \subseteq S$ to be the set of rational primes $p$ such that there is a specialization to the character of a hyperbolic Dehn surgery such that the quaternion algebra is ramified at some prime lying above $p$ so that knots satisfying condition $(\star)$ have $S_D$ of finite cardinality. When condition $(\star)$ fails it is shown in \cite[Theorem 1.1(3)]{CRS} (using work of Harari \cite{Harari94}) that $S$ is infinite. They furthermore state as a conjecture \cite[Conjecture 6.7]{CRS} that
\begin{conj}[{\cite[Conjecture 6.7]{CRS}}] \label{conj:inf_bad}
	Let $K$ be a hyperbolic knot in $S^3$ that fails condition $(\star)$, then, in the notation above, $S=S_{D}$.
\end{conj}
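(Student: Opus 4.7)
The inclusion $S_D \subseteq S$ is immediate from the definitions, so the content of the conjecture is the reverse inclusion. Fix $p \in S$; by assumption there is a finite place $\mathfrak{p}$ of $k(C)$ above $p$ at which $A_{k(C)}$ ramifies. The goal is to produce a hyperbolic Dehn surgery slope $r$ on $K$ whose character $\chi_r$ on $C$ (defined over a number field $F_r$) specializes $A_{k(C)}$ to a quaternion algebra $A_{\chi_r}$ ramified at some prime of $F_r$ above $p$. I would begin by fixing a Hilbert symbol presentation $A_{k(C)} \cong \left(\frac{a,b}{k(C)}\right)$ and an integral model of $C$ over an open subscheme of $\Spec \Z$, so that the ramified place $\mathfrak{p}$ corresponds to an irreducible divisor $Z_{\mathfrak{p}}$ in the mod-$p$ fiber of $C$. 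Ramification of a specialization $A_\chi$ above $p$ is then a condition on how the reduction of $\chi$ meets $Z_{\mathfrak{p}}$ and can be read off the Hilbert symbol of the specialization.

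With this geometric target in hand, the natural strategy is to combine two ingredients. The first is Thurston's hyperbolic Dehn surgery theorem, which produces an infinite family $\{\chi_{r_n}\}$ of characters in $C(\overline{\Q})$ accumulating archimedeanly at the complete character. The second is the Harari-type argument used by Chinburg-Reid-Stover to show that $S$ is infinite: Hilbert irreducibility applied to a suitable fibration of $C$ guarantees infinitely many closed points of $C$ whose specializations give ramification above any prescribed $p \in S$. The plan is to upgrade this second ingredient so that the closed points one produces can be taken to be Dehn surgery characters. Concretely, one would try to show that the residues of the $\chi_{r_n}$ modulo primes of $F_{r_n}$ above $p$ spread out enough to meet $Z_{\mathfrak{p}}$ infinitely often, which in favorable cases should follow from Chebotarev or strong approximation applied to the Galois action on the roots of the A-polynomial at each slope picking out the discrete-faithful root.

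The main obstacle is precisely the gap between Hilbert-irreducibility statements, which are essentially about Zariski-generic points of $C$, and statements about the countable and highly nongeneric set of Dehn surgery characters. Thurston's theorem gives archimedean control but no direct information about residue classes modulo $p$, and the surgery characters are only implicitly defined as roots of an A-polynomial-type equation depending on the slope. Getting arithmetic control over these roots uniformly in $r$ — enough to verify that some specialization has the prescribed ramification — appears to require a concrete handle on the integral model of $C$ at $p$ together with an equidistribution statement for Dehn surgery characters modulo primes above $p$. I expect that in families with explicit algebraic descriptions of $C$, such as twist knots, the polynomial structure of the defining equations makes this density argument feasible, and this is the natural setting in which to attempt the conjecture before tackling the general case.
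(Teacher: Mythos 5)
The statement you are attempting is Conjecture \ref{conj:inf_bad}, which the paper does not prove; it is stated as an open conjecture of Chinburg--Reid--Stover, and the paper's contribution (Theorem \ref{thm:main}) is a strictly weaker result: for an infinite family of twist knots failing condition $(\star)$, the set $S_D$ (in fact only its sub-set coming from $(d,0)$ surgeries) is \emph{infinite}. That is evidence for $S=S_D$ but does not establish it, since nothing is said about whether every prime of $S$ actually occurs in $S_D$. So there is no ``paper's own proof'' of the conjecture to compare against, and a faithful review must flag that your proposal is aimed at a target the paper never reaches.

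That said, you correctly isolate the genuine difficulty. The Harari/Hilbert-irreducibility machinery shows nontrivial specializations of $A_{k(C)}$ exist above infinitely many primes, but those specializations are at Zariski-generic closed points, whereas Dehn surgery characters form a thin, highly structured, countable subset of $C(\overline{\Q})$ cut out by the trace condition at the cusp. Bridging that gap is exactly what remains open. Your suggested upgrade — equidistribution of the reductions of surgery characters modulo primes above $p$, via Chebotarev or strong approximation on the fibration of $C$ — is plausible in spirit, but it is not what the paper does even in the twist-knot case, and as written it is a program, not an argument: you never produce, for a given $p\in S$, a slope $r$ and a verification that some prime of $F_r$ over $p$ ramifies $A_{\chi_r}$.

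For the weaker statement the paper actually proves, the method is quite different and much more explicit. Rather than fixing $p\in S$ and chasing a surgery character whose reduction meets the ramification divisor $Z_{\mathfrak p}$, the paper works the other way around: it restricts to $(d,0)$ surgeries, uses the explicit polynomial $f_t(R,Z)$ for the canonical component of $K_t$ (Proposition \ref{prop:f_tFormula}) to get a concrete Hilbert symbol $\left(\frac{2\cos(2\pi/d)-2,\,-r_d}{k_d}\right)$ (Proposition \ref{prop:HilbertSymbolForKnot}), reduces ramification at a prime $\mathfrak L$ of $k_d$ to splitting conditions of $\mathfrak L\cap\Q(\zeta_d)^+$ in $\Q(\zeta_d)$ plus parity of $v_{\mathfrak L}(r_d)$ (Lemma \ref{lem:HilbertSymbolTotallyReal}), and computes $N_{k_d/\Q}(r_d)$ in terms of the Alexander polynomial via $N_{k_d/\Q(\zeta_d)^+}(r_d)=\Delta_{K_t}(\zeta_d^2)$ (Lemma \ref{lem:relativeNorm}). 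Producing infinitely many suitable $d$ is then a sign-change/residue-class argument on $N_{\Q(\zeta_d)^+/\Q}(\Delta_{K_t}(\zeta_d^2))$ using Furstenberg's density theorem for nonlacunary semigroups (Lemma \ref{lem:divisors}), with Lemma \ref{lem:multiplicativeOrder} ensuring each rational prime so produced occurs for only finitely many $d$. None of this resembles the Chebotarev-type equidistribution you outline; the ``density'' input is dynamical (orbit of an irrational rotation), not Galois-theoretic, and crucially the primes produced are \emph{discovered}, not prescribed in advance. If you want to attack the actual conjecture, you would need precisely the prescribed-prime version, and that is where your plan — like the paper — stops short.
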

The first example of a knot with infinite $S_D$ was given in \cite{SevenFour}. Our main result is to give an infinite family of twist knots that fail condition $(\star)$ and that have infinite $S_D$.
\begin{thm} \label{thm:main}
	Let $t\geq 2$, $K_t$ be a hyperbolic twist knot with $t$ half-twists. Suppose further that there exist distinct, rational, odd primes $p$ and $q$ such that
	\begin{enumerate}
		\item $pq \mid \frac{t+1}{2}$, and
		\item $t \equiv {-1}\Mod{pq}$.
	\end{enumerate} 
	Set $T$ to be the set of rational primes $l$ such that there exists a place $\mathfrak{l}$ lying above $l$ of the trace field of some hyperbolic Dehn surgery $(d,0)$ at which the canonical quaternion algebra associated to that surgery is ramified. Then $T$ is infinite.
\end{thm}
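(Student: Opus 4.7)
The plan is to adapt the strategy used for the $7_4$ knot in \cite{SevenFour} to the infinite family of twist knots $K_t$ satisfying our divisibility hypotheses. The argument splits into three main pieces.

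First, I will write down an explicit model for the canonical component $C$ of the $\SL{2}{\C}$-character variety of $K_t$ together with a Hilbert symbol presentation for $A_{k(C)}$. For two-bridge knots this is standard: using a presentation $\langle a, b \mid wa = bw \rangle$ and the Riley polynomial $\phi_t$, one obtains $C$ as an affine curve in trace coordinates, and $A_{k(C)}$ is determined by traces of two loxodromic elements and their product. For each integer $d$ such that the $(d,0)$ surgery on $K_t$ is hyperbolic---which, by Thurston, is all but finitely many $d$---specialization at a root of an explicit surgery polynomial yields the trace field $k_d$ of the surgery together with the specialized canonical quaternion algebra $A_d$.

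Second, I will verify that condition $(\star)$ fails for $K_t$ under our hypotheses. The Alexander polynomial of the twist knot $K_t$ has the shape $\Delta_{K_t}(x) = tx^2 - (2t+1)x + t$ (up to the usual conventions), and the divisibility $pq \mid (t+1)/2$ together with the congruence $t \equiv -1 \Mod{pq}$ are arranged so that the two distinct odd primes $p$ and $q$ both divide the integer whose prime divisors are forbidden by condition $(\star)$. By \cite[Theorem 1.1(3)]{CRS} the set $S$ is therefore already infinite; the content of Theorem \ref{thm:main} is that infinitely many of these primes are in fact realized by surgery specializations, i.e.\ lie in the analogue of $S_D$ denoted here by $T$.

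Third, and here is the heart of the matter, I will produce for each rational prime $l$ in an explicit infinite set a hyperbolic $(d_l, 0)$ surgery whose specialized quaternion algebra $A_{d_l}$ is ramified at a prime of $k_{d_l}$ lying above $l$. The idea is to exploit the hypotheses on $t$ to rewrite the Hilbert symbol for $A_{d}$ in a form amenable to a Chebotarev-type density argument: one looks for $d$ whose associated surgery polynomial has prescribed reduction modulo $l$, thereby forcing the local Hilbert symbol at a place above $l$ to be nontrivial. The divisibility $pq \mid (t+1)/2$ is used here to pin down the entries of the Hilbert symbol well enough that the ramification condition becomes an elementary quadratic residue condition modulo $l$, which is then satisfied by a positive density set of primes.

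The main obstacle is the uniform control in Step three. The fields $k_d$ vary with $d$, so the splitting fields relevant to the Hilbert symbol move with $d$ as well, and one cannot simply apply Chebotarev to a single extension. The technical heart of the paper will therefore be to arrange the choice $l \mapsto d_l$ so as to be compatible with the recursive (Chebyshev-like) structure of the Riley polynomials of the twist knots $K_t$, and to verify that this recursion is not disrupted by the congruence conditions imposed by $p$ and $q$. Once that compatibility is established, the infinitude of $T$ follows from an elementary counting or density argument applied to the resulting family.
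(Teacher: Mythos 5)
Your first two steps roughly track the paper's setup (explicit character variety for a two-bridge knot, Hilbert symbol via traces, Alexander polynomial fails $(\star)$), though the Alexander polynomial you quote does not match the paper's normalization $\Delta_{K_t}(x)=\left(\tfrac{t+1}{2}\right)x^2-tx+\left(\tfrac{t+1}{2}\right)$, and, more importantly, you omit the crucial reduction in Proposition~\ref{prop:HilbertSymbolForKnot}: the function-field Hilbert symbol is factored so that one factor extends to an Azumaya algebra (hence only ramifies over a fixed finite set of primes by Theorem~\ref{thm:resultatClassique}), leaving the residue-carrying factor $\HilbertSymbol{2\cos(2\pi/d)-2}{-r_d}{k_d}$ to carry all the asymptotically new ramification. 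Without that decomposition you do not have a clean two-entry symbol to analyze.

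The decisive gap is in your Step~3. You propose to fix a rational prime $l$ and then hunt for a surgery coefficient $d_l$ via a Chebotarev-type density argument applied to the surgery (Riley) polynomials, correctly flagging that the moving fields $k_d$ make a direct Chebotarev application impossible, and then gesture at ``compatibility with the recursive structure.'' That is not how the difficulty is resolved, and I do not see how it could be: Chebotarev tells you how Frobenius classes distribute in a \emph{fixed} extension, but the ramification condition you need is a statement about primes of $k_d$ dividing $-r_d$ an odd number of times and having the right splitting behavior relative to $\Q(\zeta_d)^+\subset\Q(\zeta_d)$, which is intrinsically tied to a $d$ that you have not yet chosen. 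The paper reverses the quantifiers: it restricts to $d=p^u q^v$ (here the hypotheses $pq\mid\tfrac{t+1}{2}$ and $t\equiv-1\Mod{pq}$ are used essentially), expresses $N_{k_d/\Q}(-r_d)$ via the Alexander polynomial at $\zeta_d^2$ (Lemmas~\ref{lem:relativeNorm} and~\ref{lem:absoluteNorm}), packages the product of these norms over $d\mid n$ into the resultant $\res_x(x^n-1,\Delta(x^2))$, rewrites that resultant as $2\left(\sqrt{\tfrac{t+1}{2}}\right)^n\Im(w^n)$ for $w$ a fixed point on the unit circle (Lemmas~\ref{lem:residueClassSplitResultant} and~\ref{lem:productEqualities}), and then invokes \emph{Furstenberg's theorem on nonlacunary semigroups} (Theorem~\ref{thm:nonlacunary}) to force infinitely many sign changes of $\Im(w^n)$ as $n$ ranges over $\{p^uq^v\}$. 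Each sign change forces a norm that is not $\equiv 1\Mod{pq}$, hence a prime divisor $l$ appearing to odd multiplicity with $l\not\equiv 1\Mod{pq}$, which by Lemmas~\ref{lem:HilbertSymbolTotallyReal} and~\ref{lem:totallyRealTotallySplit} yields a ramified place; Lemma~\ref{lem:multiplicativeOrder} then guarantees these $l$ are eventually new. None of this is a density theorem over number fields; it is an equidistribution (ergodic) result on the circle, and that substitution of tool is precisely what closes the gap you correctly identified but could not fill.

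A secondary issue: Lemma~\ref{lem:relativeNorm} requires that the specialization $f_t(R,2\cos(2\pi/d))$ be irreducible over $\Q(\zeta_d)^+$ so that it really is the minimal polynomial of $r_d$ and its constant term really computes the relative norm. The paper spends Section~\ref{sec:irreducibility} on this (Dvornicich--Zannier, Capelli, and a Newton polygon criterion of Bertone--Ch\`eze--Galligo). Your proposal does not address irreducibility at all, and without it the identification $N_{k_d/\Q(\zeta_d)^+}(r_d)=\Delta_{K_t}(\zeta_d^2)$ is unjustified.
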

\begin{rem}
	Infinitely many twist knots are covered by the theorem. For example, one can fix any pair of distinct, rational, odd primes $p$ and $q$ and consider the arithmetic progression $\{2pq-1 + 2jpq \> | \> j \in \Z_{\geq 0}\}$.
\end{rem}
\subsection{Outline} The paper is organized as follows. We introduce some background material on character varieties, cyclotomic fields, quaternion algebras, and the Brauer group of fields and varieties in Sections \ref{sec:charVars}, \ref{sec:cyclo}, and \ref{sec:brauer}. We then give an outline of the proof of Theorem \ref{thm:main} in Section \ref{sec:sketch} without giving proofs of intermediate steps. The remaining sections contain the proofs of the needed lemmas and propositions.
\subsection{Acknowledgments} The author wishes to thank his advisor, Alan Reid, for his help and support throughout this project. The author also wishes to thank Neil Hoffman for pointing out some errors in earlier versions of this paper.
\section{Character varieties} \label{sec:charVars}
In this section we give some background on $\SL{2}{\C}$-character varieties of Kleinian groups, trace fields, quaternion algebras. We defer more details about quaternion algebras over fields and quaternion Azumaya algebras to Section \ref{sec:brauer}.

\subsection{Generalities}
We begin by recalling that, for a finitely generated group $\Gamma$, the $\SL{2}{\C}$-representation variety of $\Gamma$ is $R(\Gamma) = \Hom(\Gamma, \SL{2}{\C})$. Given a generating set $\{\gamma_i\}$, we identify a representation $\rho : \Gamma \rightarrow \SL{2}{\C}$ with $(\rho(\gamma_1),\dots,\rho(\gamma_n)) \subset \SL{2}{\C}^n \subset \C^{4n}$. Given a different choice of generators, there is a canonical isomorphism between the two algebraic sets obtained this way. Fixing an element $\gamma \in \Gamma$, we may define a map $I_{\gamma}$ on $R(\Gamma)$ that associates to a representation $\rho$ the trace of $\rho(\gamma)$. That is, $I_{\gamma} : R(\Gamma) \rightarrow \C$ is defined by $I_{\gamma}(\rho) = \tr \rho(\gamma) = \chi_{\rho}(\gamma)$. This is a regular function on the algebraic set $R(\Gamma)$, and the ring $T$ generated by all such $I_{\gamma}$ turns out to be finitely generated (see \cite[Proposition 1.4.1]{CullerShalen}). Fixing a generating set $I_{\gamma_1},\dots,I_{\gamma_m}$ for $T$, define a map $t: R(\Gamma) \rightarrow \C^m$ by $t(\rho) = (I_{\gamma_1}(\rho),\dots,I_{\gamma_m}(\rho))$. Then the $\SL{2}{\C}$-character variety of $\Gamma$ is defined to be $t(R(\Gamma)) \subset \C^m$. This is a closed algebraic set, and different choices of generators for $T$ give isomorphic algebraic sets. In the case of $\Gamma$ equal to the fundamental group of a complement of a hyperbolic knot $K$ in $S^3$, we define its \textbf{canonical component} to be the irreducible component of $X(\Gamma)$ containing the character of the discrete and faithful representation of $\pi_1(S^3 \backslash K)$. We refer the reader to \cite{CullerShalen} for more detail.\par
Let us now fix some notation that we will use for the remainder of the paper. Let $K \subset S^3$ be a two-bridge knot. The fundamental group of $S^3 \setminus K$ can be generated by two meridians, $a$ and $b$, which are conjugate in $\pi_1\left(S^3 \setminus K \right)$. Any nonabelian representation $\rho$ of the knot group then can be conjugated to be of the form
\[
\begin{aligned}
	\rho(a) &= \begin{pmatrix}
		x & 1 \\
		0 & 1/x
	\end{pmatrix} \\
	\rho(b) &= \begin{pmatrix}
		x & 0 \\
		r & 1/x
	\end{pmatrix}.
\end{aligned}
\]
When $r=0$, the representation is reducible. We use the variables
\[
\begin{aligned}
	Z &= \chi_{\rho}(a) = \chi_{\rho}(b) = x + \frac{1}{x} \\
	R &= \chi_{\rho}(ab^{-1}) = 2 - r.
\end{aligned} 
\]
\subsection{Computation of the character varieties} We now collect some facts about the $\SL{2}{\C}$-character variety of the knot $K_t$. Most of this follows from work in \cite{MPvL}, but we prove some further specifics that will be needed for later proofs. For the knots covered by Theorem \ref{thm:main} (and others), results in \cite{MPvL} show that there is only one component containing the character of an irreducible representation. As a matter of notation our knots $K_t$ are their knots $J(-t, 2)$. We will use their work to prove the following characterization of the defining polynomial for the canonical component.
\begin{prop} \label{prop:f_tFormula}
	Let $t \geq 3$ be an odd positive integer. Write $f_t(R,Z)$ for the defining polynomial of the $\SL{2}{\C}$-character variety for the knot $K_t$. Then,
	\[
	f_t(R,Z) =
	\begin{cases}
		R^t + \left(1-Z^2\right) + \sum\limits_{i=1}^{t-1}\left(a_i - b_i Z^2\right)R^i  & t\equiv 1\Mod{4} \\
		R^t - 1 + \sum\limits_{i=1}^{t-1}\left(a_i - b_i Z^2\right)R^i  & t\equiv 3\Mod{4},
	\end{cases}
	\]
	where $a_i, b_i \in \Z$ and $a_{t-1}=b_{t-1} = 1$. In particular, $\deg_Z{f_t} = 2$.
\end{prop}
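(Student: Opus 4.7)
The strategy is to bootstrap from the character variety computation of Macasieb--Petersen--van Luijk \cite{MPvL} for $K_t = J(-t,2)$ and to repackage their formula so as to expose the structure asserted in the proposition. The knot group has presentation $\langle a,b \mid aw = wb\rangle$ where $w$ is an explicit word of length linear in $t$ in the generators, and for an irreducible representation the canonical component is cut out by the vanishing of a single matrix entry of $\rho(a)\rho(w)-\rho(w)\rho(b)$. Substituting the normal form of $\rho(a)$ and $\rho(b)$ from the previous subsection yields an explicit polynomial in $x$ and $r$, which under the symmetric substitution $Z = x + 1/x$ and $R = 2 - r$ becomes $f_t(R,Z)$.

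The key computational tool is a Chebyshev recurrence. Set $M = \rho(b)\rho(a)^{-1}$; a direct calculation gives $\det M = 1$ and $\tr M = 2 - r = R$, so Cayley--Hamilton yields $M^n = U_{n-1}(R)\,M - U_{n-2}(R)\,I$, where $U_n$ is a Chebyshev polynomial of the second kind. Since for a twist knot the word $w$ reduces to a power of $M$ (of order roughly $(t-1)/2$, up to conjugation and short boundary factors), the entries of $\rho(w)$ become polynomials of degree $t-1$ in $R$ whose coefficients are entries of $M$; the latter are linear in $r$ and of degree at most two in $\{x,1/x\}$, which upon symmetrization is degree at most two in $Z$. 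This immediately gives $\deg_Z f_t \leq 2$, and inspection of a specific coefficient (for instance the constant term or the $R^{t-1}$ term) shows equality.

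With the recurrence in hand, the three remaining claims follow by inspection. The top monomial in $R$ arises solely from $U_{t-1}(R) \sim R^{t-1}$ combined with the leading $R$-contribution of the remaining matrix entry, producing the coefficient $+1$ of $R^t$. The coefficient of $R^{t-1}$ combines the subleading term of $U_{t-1}$ with the next entry contribution; a short computation gives $1-Z^2$, so $a_{t-1}=b_{t-1}=1$. The constant term comes from setting $R=0$ and evaluating $U_{(t-1)/2}$ and $U_{(t-3)/2}$ at a constant, whose sign alternates with $(t-1)/2 \pmod 2$; this produces exactly the dichotomy $1-Z^2$ versus $-1$ according to $t \equiv 1$ or $t \equiv 3 \pmod 4$. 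The main obstacle is precisely this parity bookkeeping in the constant term: all other assertions reduce to finite calculations with the explicit Chebyshev expansion, but one must be careful to track signs through the recurrence in order to distinguish the two residue classes.
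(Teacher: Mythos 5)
Your proposal sketches a re-derivation of the Macasieb--Petersen--van Luijk formula from scratch rather than citing it, and while the spirit is right (the $\sigma_i$, $\tau_i$, $\Phi_i$ in \cite{MPvL} really are Chebyshev-flavored polynomials satisfying $\sigma_{i+1}-u\sigma_i+\sigma_{i-1}=0$), the argument as written has two real gaps.

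First, the degree bookkeeping from a single Cayley--Hamilton expansion does not come out right. If $w$ were literally a power $M^n$ with $n\approx (t-1)/2$, then $M^n = U_{n-1}(R)M - U_{n-2}(R)I$ would give entries of $\rho(w)$ of $R$-degree about $(t-3)/2$, not $t-1$. What actually happens for $J(-t,2)$, as encoded in Lemma~\ref{lem:simplerCharVar}, is that the defining polynomial is $\Phi_t(R)\Phi_{-t-1}(R)(Y-R)-1$: there are \emph{two} Chebyshev-type factors, each of degree $(t-1)/2$, whose product gives the degree-$(t-1)$ coefficient of $(Y-R)$, hence total $R$-degree $t$. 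The word $w$ for a twist knot is an alternating product rather than a pure power, and collapsing it to ``a power of $M$ up to short boundary factors'' hides precisely the structure that doubles the degree. If you want to run your direct computation, you need to identify both $\Phi$-factors and track their degrees separately.

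Second, you explicitly flag the constant-term parity analysis as ``the main obstacle'' but then do not carry it out; evaluating $U_{(t-1)/2}(0)$ and invoking alternation ``with $(t-1)/2 \pmod 2$'' is not a proof. The paper's proof handles this by proving, by induction via $\Phi_{i+2}=u\Phi_i-\Phi_{i-2}$, a complete normal form for $\Phi_i$: leading coefficient $\pm 1$ by sign of $i$, second leading coefficient $-1$ (odd $i$) or $0$ (even $i$), and constant term determined by $i \bmod 8$ via the period-$8$ relation $\Phi_{i+2}(0) = -\Phi_{i-2}(0)$. Only after this structure theorem is established do the $R^t$, $R^{t-1}$, and constant-term claims fall out, and the $t\equiv 1$ vs.\ $t\equiv 3\pmod 4$ dichotomy in the statement arises from pairing the residue of $t\pmod 8$ with that of $-t-1\pmod 8$. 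You would need to do essentially this same induction in your Chebyshev language; at present the sign analysis is a stated hope rather than an argument. I recommend you either cite Proposition~\ref{prop:MPvLvarieties} and Lemma~\ref{lem:MPvLPhiRelations} directly (as the paper does) and then prove the $\Phi_i$ structure theorem, or, if you insist on the from-scratch route, write out the word $w$, identify the two Chebyshev factors, and carry the sign induction to completion.
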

The proof is straightforward using results in \cite{MPvL}, but their work requires some setup. The following definition is from {\cite[Definitions 3.1 and 3.2]{MPvL}} except their $f$ and $g$ are our $\sigma$ and $\tau$, respectively, and we use $i$ to index rather than their $j$ or $k$.
\begin{defn}
	Set $\sigma_0 = 0$, $\sigma_1 = 1$. For all other $i \in \Z$, define $\sigma_i \in \Z[u]$ by the relation $\sigma_{i+1} - u \sigma_i + \sigma_{i-1} = 0$. For all integers $i$, define $\tau_i = \sigma_i - \sigma_{i-1}$, $\Phi_{2i} = \sigma_i$, $\Phi_{2i-1} = \tau_i$, and $\Psi_i = \Phi_{i+1} - \Phi_{i-1}$.
\end{defn}
\begin{lem}[{\cite[Lemma 3.5]{MPvL}}] \label{lem:MPvLPhiRelations}
	For any integer $i$, we have $\Phi_{i+2} = u\Phi_i - \Phi_{i-2}$, $\Phi_i = (-1)^{i+1}\Phi_{-i}$, and $\deg \Phi_i = \left\lfloor{\left(\abs{i}-1\right)/2}\right\rfloor$.
\end{lem}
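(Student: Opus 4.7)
The plan is to establish all three claims by splitting on the parity of the index and exploiting the defining recurrence $\sigma_{i+1} = u\sigma_i - \sigma_{i-1}$ together with the linear definitions $\tau_i = \sigma_i - \sigma_{i-1}$, $\Phi_{2i} = \sigma_i$, $\Phi_{2i-1} = \tau_i$.

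For the three-term recurrence $\Phi_{i+2} = u\Phi_i - \Phi_{i-2}$, I would split on the parity of $i$. If $i = 2k$ is even, the claim unpacks to $\sigma_{k+1} = u\sigma_k - \sigma_{k-1}$, which is the defining recurrence itself. If $i = 2k-1$ is odd, it becomes $\tau_{k+1} = u\tau_k - \tau_{k-1}$; substituting $\tau_j = \sigma_j - \sigma_{j-1}$ into the right side and grouping the result as $(u\sigma_k - \sigma_{k-1}) - (u\sigma_{k-1} - \sigma_{k-2})$, the $\sigma$ recurrence collapses this to $\sigma_{k+1} - \sigma_k = \tau_{k+1}$.

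For the parity identity $\Phi_i = (-1)^{i+1}\Phi_{-i}$, I would first establish the auxiliary fact $\sigma_{-k} = -\sigma_k$ for all $k \geq 0$ by induction, using the base cases $\sigma_0 = 0$ and $\sigma_{-1} = -1 = -\sigma_1$ (read off from the recurrence at $i=0$) and the symmetry of the three-term relation. The even case $i = 2k$ then follows immediately from $\Phi_{-2k} = \sigma_{-k} = -\Phi_{2k}$, matching the sign $(-1)^{2k+1} = -1$. For odd $i = 2k-1$, the computation $\Phi_{-(2k-1)} = \tau_{1-k} = \sigma_{1-k} - \sigma_{-k} = -\sigma_{k-1} + \sigma_k = \tau_k = \Phi_{2k-1}$ matches $(-1)^{2k} = 1$.

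Finally, for the degree formula, the parity identity reduces the problem to $i \geq 1$. After verifying the base cases $\Phi_1 = 1$, $\Phi_2 = 1$, $\Phi_3 = u - 1$, $\Phi_4 = u$, the inductive step uses the recurrence from the first part together with the inductive hypothesis $\deg \Phi_{i-2} = \lfloor (i-3)/2 \rfloor < \lfloor (i-1)/2 \rfloor = \deg \Phi_i$, so in $\Phi_{i+2} = u\Phi_i - \Phi_{i-2}$ no cancellation can occur at top degree and $\deg \Phi_{i+2} = 1 + \deg \Phi_i = \lfloor (i+1)/2 \rfloor$. One small detail to track is that the leading coefficient of $\Phi_i$ remain nonzero throughout; since $\Phi_{i-2}$ sits strictly below the leading degree of $u\Phi_i$, the leading coefficient is transported unchanged and remains equal to $1$. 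The only real source of care in the whole argument is the parity case split in the first two assertions, which I expect to be the main bookkeeping obstacle; once that is handled, the induction for the degree formula is routine.
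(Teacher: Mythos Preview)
Your proof is correct and self-contained. The paper itself does not prove this lemma; it is quoted directly from \cite{MPvL} with no argument given, so there is no ``paper's own proof'' to compare against beyond the citation. Your parity split for the recurrence and for the symmetry $\Phi_i = (-1)^{i+1}\Phi_{-i}$, together with the auxiliary antisymmetry $\sigma_{-k} = -\sigma_k$, is exactly the kind of elementary verification one would expect, and the degree induction goes through cleanly once the recurrence is in hand. One minor remark: your aside about the leading coefficient staying equal to $1$ is not actually needed for the degree count, since the inductive hypothesis $\deg\Phi_i = \lfloor (i-1)/2\rfloor$ already forces the top coefficient to be nonzero and places $\Phi_{i-2}$ strictly below $\deg(u\Phi_i)$; but it does no harm.
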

Using these notations, we may write down polynomials defining the $\PSL{2}{\C}$- and $\SL{2}{\C}$-character varieties.
\begin{prop}[{\cite[Proposition 3.8]{MPvL}}] \label{prop:MPvLvarieties}
	Let $\mu, \nu$ be any integers with $\nu$ even. Let $Y = \chi_{\rho}(a^2)$ and $R$, $Z$ be as in Section \ref{sec:sketch}. The $\PSL{2}{\C}$-character variety of $J(\mu, \nu)$ is isomorphic to the subvariety of $\mathbf{A}^2$ cut out by the polynomial
	\[
	h_{\mu, \nu}(R, Y) = \sigma_{\xi}(\theta)\left(\Phi_{-\mu}(R)\Phi_{\mu-1}(R)\left(Y-R\right)-1\right) + \sigma_{\xi-1}(\theta),
	\]
	where $\theta = \Phi_{-\mu}(R)\Phi_{\mu}(R)(Y-R) + 2$ and $\xi = \nu/2$. The $\SL{2}{\C}$-character variety is isomorphic to the double cover of the above model of the $\PSL{2}{\C}$-character variety given by $Y = Z^2 - 2$.
\end{prop}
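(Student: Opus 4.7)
The plan is to follow the method of \cite[Proposition 3.8]{MPvL}. First I would write down a two-generator presentation of $\pi_1(S^3\setminus J(\mu,\nu))$ of the standard two-bridge form $\langle a, b \mid wa = bw\rangle$, where $a,b$ are the meridians from Section \ref{sec:charVars} and $w$ is the word in $a$ and $b$ determined by the continued-fraction expansion associated to $J(\mu,\nu)$; for $\nu$ even, $w$ is built by concatenating $\xi = \nu/2$ copies of a subword involving $a^{\mu}$ and $b^{-\mu}$. Using the normal form for $\rho(a)$ and $\rho(b)$ already recorded in the section, every nonabelian representation is then parameterized by $(x,r)\in \C^{\times}\times\C$, equivalently by $(Z,R)$, and the character variety is cut out by the single scalar equation coming from $\rho(wa)=\rho(bw)$ after using $\det=1$ and the fact that $a,b$ are conjugate. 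To pass to the $\PSL{2}{\C}$ setting one replaces $Z$ by $Y = Z^2-2 = \chi_{\rho}(a^2)$, the squared-trace invariant that survives the $\pm I$ ambiguity.

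The computational heart is to evaluate $\tr\rho(wa)$ and $\tr\rho(bw)$ in closed form. The Cayley--Hamilton relation in $\SL{2}{\C}$, namely $M^{i+1} = (\tr M)\,M^i - M^{i-1}$, matches exactly the recurrence $\sigma_{i+1} = u\sigma_i - \sigma_{i-1}$ defining $\sigma_i$, so the entries of $\rho(g^i)$ for any $g\in\pi_1$ are polynomials in $\sigma_i$ and $\tau_i$ evaluated at $\tr\rho(g)$. Applied to $g = ab^{-1}$ one gets the $\Phi_{\pm\mu}(R)$ factors that appear as entries of $\rho(a^{\mu})$-type blocks; applied to the compound element whose trace equals $\theta$, the $\xi$-fold concatenation in $w$ packages into $\sigma_{\xi}(\theta)$ and $\sigma_{\xi-1}(\theta)$. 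The identities in Lemma \ref{lem:MPvLPhiRelations}, all formal consequences of the Chebyshev-type recurrences, then collapse the cross terms into the stated shape of $h_{\mu,\nu}(R,Y)$.

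The main obstacle is the bookkeeping: showing that the $\xi$-fold product really aggregates into $\sigma_{\xi}(\theta)\bigl(\Phi_{-\mu}(R)\Phi_{\mu-1}(R)(Y-R)-1\bigr)+\sigma_{\xi-1}(\theta)$ requires a nested induction (on $\lvert\mu\rvert$ for the inner factor and on $\xi$ for the outer one), together with a verification that the resulting scheme is reduced and cuts out the entire character variety rather than a proper subvariety. Once the $\PSL{2}{\C}$ statement is in hand, the $\SL{2}{\C}$ version is formal: the central quotient $\SL{2}{\C}\to\PSL{2}{\C}$ induces a degree-two map on character varieties given exactly by the relation $Y = Z^2-2$, so pulling $h_{\mu,\nu}$ back along this substitution yields the defining polynomial of the $\SL{2}{\C}$-character variety as a double cover of the $\PSL{2}{\C}$ one.
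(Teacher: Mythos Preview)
The paper does not prove this proposition; it is quoted verbatim as \cite[Proposition~3.8]{MPvL} and used as a black box, so there is no ``paper's own proof'' to compare against. Your sketch is a reasonable outline of the argument one finds in \cite{MPvL}: normalize a nonabelian representation of the two-bridge presentation $\langle a,b\mid wa=bw\rangle$, use Cayley--Hamilton to express powers via the Chebyshev-type polynomials $\sigma_i,\tau_i,\Phi_i$, and then run an induction on $\xi$ to collapse the $\xi$-fold concatenation in $w$ into the $\sigma_\xi(\theta)$, $\sigma_{\xi-1}(\theta)$ combination. One small correction: the $\Phi_{\pm\mu}(R)$ factors arise from powers of $ab^{-1}$ (whose trace is $R$), not from ``$\rho(a^\mu)$-type blocks'' as you wrote; the inner block in $w$ for $J(\mu,\nu)$ involves $(ab^{-1})^{\pm}$ patterns rather than pure powers of a single meridian. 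Beyond that, the reducedness/completeness verification you flag is indeed part of the work in \cite{MPvL}, and the passage from $\PSL{2}{\C}$ to $\SL{2}{\C}$ via $Y=Z^2-2$ is exactly as you describe.
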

Fortunately for us, the above formula cleans up significantly when we restrict attention to the family of twist knots $K_t = J(-t, 2)$. In particular, $\xi = 1$ in which case $\sigma_{\xi} = 1$ and $\sigma_{\xi-1} = 0$. This also allows us to ignore the $\theta$ variable. We summarize this as
\begin{lem} \label{lem:simplerCharVar}
	Let $t$ be a nonzero integer. The $\PSL{2}{\C}$-character variety of the knot $K_t$ is given by
	\[
	h_t(R,Y) = \Phi_t(R)\Phi_{-t-1}(R)(Y-R) - 1.
	\]
\end{lem}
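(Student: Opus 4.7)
The plan is to specialize Proposition \ref{prop:MPvLvarieties} to the case $(\mu,\nu)=(-t,2)$ and observe that the formula collapses dramatically. First I would compute $\xi = \nu/2 = 1$, and then evaluate the two relevant $\sigma$-values from the defining recursion: the base cases $\sigma_0 = 0$ and $\sigma_1 = 1$ give immediately $\sigma_{\xi}(\theta) = \sigma_1(\theta) = 1$ and $\sigma_{\xi-1}(\theta) = \sigma_0(\theta) = 0$, regardless of what $\theta$ is. The key observation is that because $\sigma_1$ is the constant polynomial $1$ in $u$, the $\theta$ variable drops out of the first summand entirely, and because $\sigma_0$ vanishes the second summand drops out as well. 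Hence there is no need to analyze $\theta = \Phi_{-\mu}(R)\Phi_{\mu}(R)(Y-R) + 2$ at all.

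Substituting $\mu = -t$ into $\Phi_{-\mu}(R)\Phi_{\mu-1}(R)(Y-R)-1$ gives $\Phi_t(R)\Phi_{-t-1}(R)(Y-R)-1$, which is precisely the claimed $h_t(R,Y)$. This is a routine direct substitution with no real obstacle; the only thing to verify carefully is that the definitions are being applied correctly (in particular that the indexing conventions from \cite{MPvL} are being matched to the conventions recorded in the definition preceding Lemma \ref{lem:MPvLPhiRelations}, which was already emphasized in the excerpt). No use of the identities in Lemma \ref{lem:MPvLPhiRelations} is required at this stage, though they will presumably be needed in the proof of Proposition \ref{prop:f_tFormula} that follows.
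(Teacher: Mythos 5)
Your proof is correct and is exactly what the paper does: the paper itself derives the lemma informally in the sentence preceding its statement by noting that for $K_t = J(-t,2)$ one has $\xi = 1$, hence $\sigma_\xi = 1$ and $\sigma_{\xi-1} = 0$, which makes $\theta$ irrelevant and collapses Proposition \ref{prop:MPvLvarieties} to the stated formula. The substitution of $\mu = -t$ into $\Phi_{-\mu}\Phi_{\mu-1}$ to get $\Phi_t\Phi_{-t-1}$ is also handled correctly.
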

\begin{proof}[Proof of Proposition \ref{prop:f_tFormula}]
	The content of Proposition \ref{prop:f_tFormula} is
	\begin{enumerate}[ref=(\arabic*)]
		\item \label{item:bidegree} the $(R,Z)$-bidegree of $f_t(R,Z)$ is $(t,2)$, \\
		\item \label{item:noZfirst} there are no $Z$ to the first power terms, \\
		\item \label{item:leadingCoefficients} the coefficients of $R^t$ and $R^{t-1}$ are $1$ and $(1-Z^2)$ respectively, and \\
		\item \label{item:constantTerm} the constant term with respect to $R$ is $(1-Z^2)$ when $t \equiv 1 \Mod{4}$ and $-1$ when $t \equiv 3 \Mod{4}$.
	\end{enumerate}
	We note that the $R$-degree part of item \ref{item:bidegree} follows from Lemma \ref{lem:MPvLPhiRelations} and the $Z$-degree part from lemma \ref{lem:simplerCharVar} as the $\Phi_i$ family are univariate polynomials in $R$. We also get item \ref{item:noZfirst} from Lemma \ref{lem:simplerCharVar}. To prove items \ref{item:leadingCoefficients} and \ref{item:constantTerm}, we first show that $\Phi_i$ is of the form
	\[
	\Phi_i(u) = 
	\begin{cases}
		0 & i = 0 \\
		u^{\frac{\abs{i}-1}{2}}-u^{\frac{\abs{i}-3}{2}} + \cdots + 1 & i \equiv 1,7\Mod{8} \\
		u^{\frac{\abs{i}-1}{2}}-u^{\frac{\abs{i}-3}{2}} + \cdots - 1 & i \equiv 3,5\Mod{8} \\
		\sgn(i)u^{\frac{\abs{i}}{2}-1} + \lambda_{i_1}u^{\frac{\abs{i}}{2}-3} + \cdots + \lambda_{i_2}u & i\neq 0, i \equiv 0,4\Mod{8} \\
		\sgn(i)u^{\frac{\abs{i}}{2}-1} + \lambda_{i_1}u^{\frac{\abs{i}}{2}-3} + \cdots + 1 & i \equiv 2\Mod{8} \\
		\sgn(i)u^{\frac{\abs{i}}{2}-1} + \lambda_{i_1}u^{\frac{\abs{i}}{2}-3} + \cdots - 1 & i \equiv 6\Mod{8},
	\end{cases}
	\]
	where $\lambda_{i_j} \in \Z$. That is, when $i$ is odd, $\Phi_i$ is monic, its second leading coefficient is $-1$, and its constant term is $\pm 1$ depending on the residue class modulo $8$ of $i$, and when $i$ is even, the leading coefficient is equal to the sign of $i$, the second leading coefficient is $0$, and the constant term is $0$ when $i$ is divisible by $4$, $1$ when $i \equiv 2 \Mod{8}$ and $-1$ when $i \equiv 6\Mod{8}$. The degrees follow from Lemma \ref{lem:MPvLPhiRelations}. Moreover, note that it suffices to handle the cases of $i$ positive since we have the relation $\Phi_i = (-1)^{i+1}\Phi_{-i}$, so we henceforth assume $i$ positive. We use the base cases $\Phi_1(u) = \Phi_2(u) = 1$, $\Phi_3(u) = u-1$, $\Phi_4(u) = u$, and $\Phi_5(u) = u^2-u-1$. We use the relation $\Phi_{i+2} = u\Phi_i - \Phi_{i-2}$ to see immediately that, when $i$ is even, $\Phi_{i+2}$ must be monic. To see that the second leading coefficient is $0$ when $i$ is odd, we use the equality $\Phi_{i+2} = u\Phi_i - \Phi_{i-2}$ to find that the second leading coefficient of $\Phi_{i+2}$ is the same as that of $u\Phi_i$ because the second leading coefficient is the coefficient of $u$ to the power $(i-2)/2$, but $\Phi_{i-2}$ has degree $(i-4)/2$, so using the base case of $\Phi_4$, we see that the second leading coefficient of $\Phi_i$ is $0$ when $i$ is even. For $i$ odd, note that the relation $\Phi_{i+2} = u\Phi_i - \Phi_{i-2}$ implies that the leading coefficient of $\Phi_{i+2}$ is the same as that of $\Phi_i$ (and so equal to $1$ by induction). Further the degree of $\Phi_{i-2}$ is $(i-3)/2$, so it makes no contribution to the coefficient of $u^{\frac{i-1}{2}}$, which is the second leading term of $\Phi_{i+2}$. Then by the inductive hypothesis, we see that the second leading coefficient must be $-1$. To treat the constant term, note that $\Phi_{i+2}(0) = -\Phi_{i-2}$, so the constant term depends only on the residue class modulo $8$. The base cases listed above combined with this observation handle all the constant terms. \par
	Now we may prove item \ref{item:leadingCoefficients}. Recall that $t$ is odd and positive, so $\Phi_t(R)$ is monic and $\Phi_{-t-1}(R)$ has leading coefficient $-1$. Note that the second leading coefficient of $\Phi_{t-1}(R)$ is $0$, so the second leading coefficient of $\Phi_t(R)\Phi_{-t-1}(R)$ is the coefficient of $-R^{\frac{t-3}{2}}$ times $-R^{\frac{t+1}{2}-1}$, namely $1$. We summarize this as
	\[
	\begin{aligned}
		h_t(R,Y) &= \Phi_t(R)\Phi_{-t-1}(R)(Y-R) - 1 \\
		&=\left(-R^{t-1}+R^{t-2}+ \cdots \right)(Y-R) - 1 \\
		&=R^t + (-1-Y)R^{t-1} + \cdots \\
	\end{aligned}
	\]
	Then we see that $h_t(R,Y)$ is monic is $R$ and after substituting $Y=Z^2-2$, the second leading coefficient is $(1-Z^2)$. \par
	For item \ref{item:constantTerm}, we first suppose that $t\equiv 1\Mod{4}$, so $t$ is equivalent to either $1$ or $5$ modulo $8$. If $t \equiv 1 \Mod{8}$, then $\Phi_t(0) = 1$ and $\Phi_{-t-1}(0) = -1$ as $-t-1 \equiv 6 \Mod{8}$. If $t \equiv 5 \Mod{8}$, then $\Phi_t(0) = -1$ and $\Phi_{-t-1}(0) = 1$ since $-t-1 \equiv 2 \Mod{8}$. In either case, then, $h_t(0,Y) = -Y-1$, which becomes $1-Z^2$. If $t\equiv 3 \Mod{4}$, then $-t-1$ is divisible by $4$, so $\Phi_{t-1}(0) = 0$, so $h_t(0,Y) = -1$.
\end{proof}
Next, we prove a lemma relating the character variety to the Alexander polynomial.
\begin{lem} \label{lem:constantTerm} Let $f_t$ be as above and $\Delta_{K_t}(x) = \left(\frac{t+1}{2}\right)x^2 - tx + \left(\frac{t+1}{2}\right)$ be the Alexander polynomial of $K_t$. Then,
	\[
	f_t(2,x+x^{-1}) = \frac{-\Delta_{K_t}(x^2)}{x^2} = -\left(\left(\dfrac{t+1}{2}\right)x^2-t+\left(\dfrac{t+1}{2}\right)x^{-2}\right).
	\]
\end{lem}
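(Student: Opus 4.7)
The plan is to evaluate the explicit formula for $f_t$ coming from Lemma \ref{lem:simplerCharVar} at the specified point and match the result against $-\Delta_{K_t}(x^2)/x^2$. Since the $\SL{2}{\C}$-character variety is obtained from the $\PSL{2}{\C}$-character variety by the substitution $Y = Z^2 - 2$, one has
\[
f_t(R,Z) \;=\; \Phi_t(R)\,\Phi_{-t-1}(R)\,(Z^2 - 2 - R) \;-\; 1.
\]
Evaluating at $R=2$ and $Z = x + x^{-1}$, note $Z^2 - 2 - R = x^2 + x^{-2} - 2$, so the whole computation reduces to determining $\Phi_t(2)$ and $\Phi_{-t-1}(2)$.

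The main step is therefore the evaluation of $\sigma_i(2)$. The recurrence $\sigma_{i+1} = u\sigma_i - \sigma_{i-1}$ with $\sigma_0 = 0$ and $\sigma_1 = 1$ becomes $\sigma_{i+1} = 2\sigma_i - \sigma_{i-1}$ at $u=2$, whose solution is $\sigma_i(2) = i$ for all $i \in \Z$ (quick induction in both directions). Hence $\Phi_{2i}(2) = i$ and $\Phi_{2i-1}(2) = \sigma_i(2) - \sigma_{i-1}(2) = 1$. Since $t$ is odd, $\Phi_t(2) = 1$; and $-t-1$ is even with $-t-1 = 2\cdot\bigl(-(t+1)/2\bigr)$, so $\Phi_{-t-1}(2) = -(t+1)/2$.

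Plugging these in gives
\[
f_t(2, x+x^{-1}) \;=\; -\tfrac{t+1}{2}\bigl(x^2 + x^{-2} - 2\bigr) - 1 \;=\; -\tfrac{t+1}{2}x^2 + t - \tfrac{t+1}{2}x^{-2},
\]
which is exactly $-\Delta_{K_t}(x^2)/x^2$ for $\Delta_{K_t}(x) = \frac{t+1}{2}x^2 - tx + \frac{t+1}{2}$. There is no real obstacle here: the only nontrivial input is the identity $\sigma_i(2) = i$, and everything else is algebraic bookkeeping using Lemma \ref{lem:simplerCharVar}.
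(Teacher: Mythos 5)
Your proposal is correct and follows essentially the same route as the paper: both evaluate $f_t(2,x+x^{-1})$ via Lemma \ref{lem:simplerCharVar} after first computing $\Phi_i(2)$ ($=1$ for $i$ odd, $=i/2$ for $i$ even). The only cosmetic difference is that you derive these values by first noting $\sigma_i(2) = i$ and unwinding the definitions of $\Phi_{2i}$ and $\Phi_{2i-1}$, whereas the paper inducts directly on the two-step recurrence $\Phi_{i+2}(2) = 2\Phi_i(2) - \Phi_{i-2}(2)$; the content is the same.
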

\begin{proof}
	We first note that
	\[
	\Phi_i(2) =
	\begin{cases}
		1 & i \text{ odd } \\
		i/2 & i \text { even }.
	\end{cases}
	\]
	Indeed, note that $\Phi_1(2) = \Phi_3(2) = 1$, $\Phi_0(2) = 0$, and $\Phi_2(2) = 1$. The specialized relation $\Phi_{i+2}(2) = 2\Phi_i(2) - \Phi_{i-2}(2)$ immediately handles the $i$ odd case. For $i$ even, we may induct and note that
	\[ 
	\begin{aligned}
		\Phi_{i+2}(2) &= 2\Phi_i(2) - \Phi_{i-2}(2) \\
		&= 2\left(\dfrac{i}{2}\right)-\dfrac{i-2}{2} \\
		&= \frac{i+2}{2}.
	\end{aligned} 
	\]
	Now we compute using Lemma \ref{lem:simplerCharVar}. We have $h_t(2,Y) = \left(\dfrac{-t-1}{2}\right)\left(Y-2\right)-1$. Then we substitute $Y=Z^2-2$ and $Z = x + x^{-1}$. After cleaning up, we get the desired equality.
\end{proof}
\subsection{Number Fields and Quaternion Algebras Associated to Subgroups of \texorpdfstring{$\SL{2}{\C}$}{SL2(C)}}We next turn to some background information about subgroups of $\SL{2}{\C}$. A subgroup $\Gamma$ of $\SL{2}{\C}$ is \textbf{non-elementary} if its image in $\mathrm{PSL}_2\C$ has no finite orbit in its action on $\mathbf{H}^3\cup\widehat{\C}$. Given an non-elementary subgroup $\Gamma$ of $\SL{2}{\C}$, we define its \textbf{trace field} by $k_{\Gamma} = \Q\left(\tr \gamma \mathspace | \mathspace \gamma \in \Gamma\right)$ and \textbf{quaternion algebra} by the $k_{\Gamma}$-span of elements of $\Gamma$. That is,
\[
A_{\Gamma} = \left\{\sum_{\text{finite}}\alpha_i \gamma_i \mathspace \big| \mathspace \alpha_i \in k_{\Gamma}, \gamma_i \in \Gamma \right\}.
\]
As shown in \cite[p.78]{MR}, we may write a Hilbert symbol for this quaternion algebra as
\[
\HilbertSymbol{\chi(g)^2-4}{\chi(g,h)-2}{k_{\Gamma}},
\]
where $g, h$ are noncommuting hyperbolic elements of $\Gamma$. 
In fact, this pointwise construction extends to define a quaternion algebra over the function field of the canonical component. 

\begin{prop}[{\cite[Corollary 2.9]{CRS}}] \label{CRSFunctionFieldHilbertSymbol} Let $\Gamma$ be a finitely generated group, and $C$ an irreducible component of the character variety of $\Gamma$ defined over the number field $k$. Assume that $C$ contains the character of an irreducible representation, and let $g,h \in \Gamma$ be two elements such that there exists a representation $\rho$ with character $\chi_{\rho} \in C$ for which the restriction of $\rho$ to $\langle g, h \rangle$ is irreducible. Then the canonical quaternion algebra $A_{k(C)}$ is described by the Hilbert symbol
	\[
	\HilbertSymbol{I_g^2-4}{I_{[g,h]}-2}{k(C)}.	
	\] 
\end{prop}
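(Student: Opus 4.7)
The plan is to adapt to the function field setting the classical identification of the quaternion algebra of a non-elementary Kleinian group with the Hilbert symbol $\left(\tr(g)^2 - 4, \tr[g,h] - 2\right)$ (cf.\ \cite{MR}). The new feature is that one works with the tautological representation rather than a specific one, and the irreducibility hypothesis is precisely what allows this generic construction to go through.

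First, since each $I_\gamma$ is a regular function on the character variety, the quantities $I_g^2 - 4$ and $I_{[g,h]} - 2$ lie in $k(C)$, so the symbol algebra $B := \HilbertSymbol{I_g^2-4}{I_{[g,h]}-2}{k(C)}$ is a well-defined quaternion algebra over $k(C)$. Next, I would unpack $A_{k(C)}$ in terms of the tautological representation: over a suitable extension $L$ of $k(C)$ there is a representation $P \colon \Gamma \to \SL{2}{L}$ with $\tr P(\gamma) = I_\gamma$, and $A_{k(C)}$ is the $k(C)$-subalgebra of $M_2(L)$ generated by $P(\Gamma)$.

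The core of the argument is the standard trace computation. Let $\widetilde{g} = P(g) - \tfrac{I_g}{2}\mathbf{1}$ and $\widetilde{h} = P(h) - \tfrac{I_h}{2}\mathbf{1}$ be the traceless parts. Cayley--Hamilton gives $\widetilde{g}^{\,2} = \tfrac{I_g^2 - 4}{4}\mathbf{1}$, and (since the hypothesized representation makes $I_g^2 - 4$ a nonzero element of $k(C)$) one can subtract an explicit $k(C)$-multiple of $\widetilde{g}$ from $\widetilde{h}$ to produce $\widetilde{h}'$ with $\widetilde{g}\widetilde{h}' = -\widetilde{h}'\widetilde{g}$. Applying Fricke's identity $I_{[g,h]} = I_g^2 + I_h^2 + I_{gh}^2 - I_g I_h I_{gh} - 2$ to $(\widetilde{h}')^2$ and rearranging (via scaling by squares together with the elementary Brauer-class identity $(a,ab) = (a,b)(a,-1)$ for quaternion symbols) then identifies the $k(C)$-subalgebra generated by $\widetilde{g}$ and $\widetilde{h}'$ with $B$. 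This produces a $k(C)$-algebra homomorphism $\varphi \colon B \to A_{k(C)}$.

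For the conclusion, $\varphi$ is injective since $B$ is simple, and surjectivity follows from a specialization argument: at the hypothesized $\chi_\rho$, irreducibility of $\rho|_{\langle g,h\rangle}$ makes $\mathbf{1}, \rho(g), \rho(h), \rho(gh)$ linearly independent over $k(\chi_\rho)$, so their lifts $\mathbf{1}, P(g), P(h), P(gh)$ form an $L$-basis of $M_2(L)$; Cramer's rule, with its denominators and numerators expressible as polynomials in the $I_\bullet$, then writes every $P(\gamma)$ in this basis with coefficients in $k(C)$, placing $P(\gamma)$ in the image of $\varphi$. The main obstacle is the combined trace computation in paragraph three---producing the anticommuting pair, applying Fricke, and recognizing the resulting symbol as $B$ up to Brauer equivalence. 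The calculation is standard but delicate, and it is where the irreducibility hypothesis enters essentially, via the invertibility of $I_g^2 - 4$ in $k(C)$ and via the generic linear independence used for surjectivity.
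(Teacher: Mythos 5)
The paper does not prove this proposition; it is quoted verbatim from Chinburg--Reid--Stover (\cite[Corollary 2.9]{CRS}), so there is no in-paper argument for your attempt to be compared against. With that caveat, your overall strategy---passing to traceless parts of the tautological representation, producing an anticommuting pair via Cayley--Hamilton, and using the Fricke identity to identify the second slot of the symbol---is the natural generalization of the Maclachlan--Reid Hilbert-symbol computation \cite[p.~78]{MR} to the function-field setting, and it is surely close to what CRS do. Your injectivity and surjectivity arguments are both correct: injectivity because $B$ is a simple $k(C)$-algebra, and surjectivity because irreducibility of $\rho|_{\langle g,h\rangle}$ at the chosen point makes $\mathbf{1}, P(g), P(h), P(gh)$ a basis, after which Cramer's rule applied to the nondegenerate trace form writes the coordinates of every $P(\gamma)$ as polynomials in the $I_\bullet$, hence as elements of $k(C)$.

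The one claim I would not accept as written is the parenthetical assertion that irreducibility of $\rho|_{\langle g,h\rangle}$ forces $I_g^2 - 4$ to be a nonzero element of $k(C)$. That implication is false even for a single representation: two parabolic matrices with distinct fixed points generate an irreducible (in fact free, non-elementary) subgroup of $\SL{2}{\C}$, yet both generators have trace $\pm 2$. What irreducibility gives you for free is $I_{[g,h]} - 2 \neq 0$ in $k(C)$, since for a two-dimensional representation $I_{[g,h]}(\chi_\rho) = 2$ if and only if $\rho|_{\langle g,h\rangle}$ is reducible. The non-vanishing of $I_g^2 - 4$ must either be read as an implicit hypothesis (it is needed merely for the Hilbert symbol to be defined) or be arranged separately, for instance by replacing $g$ with a word in $g$ and $h$ whose trace is non-parabolic at some character of $C$. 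Since the whole anticommuting-pair construction in your third paragraph divides by $I_g^2 - 4$, this is a genuine gap in the write-up as presented, although it is easy to patch.
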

\section{Algebraic number theory and cyclotomic fields} \label{sec:cyclo}
In this section we collect some basic facts about number fields, cyclotomic fields, and their maximal totally real subfields that we will use in later sections. For context, the trace field of a $(d,0)$ surgery always contains the maximally totally real subfield of the $d$-th cyclotomic field as a subfield, and we often leverage properties of this subfield and its elements for information about the trace field.
\subsection{Number fields}
We first fix some language. A \textbf{number field} is a finite degree extension of the rational numbers. For a number field of degree $n$, there are $n$ distinct embeddings into $\C$ that fix $\Q$. If all of these embeddings have image inside $\R$, then the field is said to be \textbf{totally real}, and if none of them do, the field is \textbf{totally imaginary}. For $L/K$ an extension of number fields, we will use an important function $N_{L/K} : K \rightarrow K$ called the \textbf{field norm}. It is defined by
\[
	N_{L/K}(x) = \prod_{\sigma} \sigma(x),
\]
where the product is taken over embeddings $\sigma: L \rightarrow \overline{K}$ that fix $L$ element-wise and $\overline{K}$ is an algebraic closure of $K$. The norm behaves well in towers.
\begin{prop}[{\cite[Corollary I.2.7]{NeukirchANT}}]
	Let $K \subseteq L \subseteq M$ be a tower of finite field extensions. Then,
	\[
		N_{M/K} = N_{L/K} \circ N_{M/L}.
	\]
\end{prop}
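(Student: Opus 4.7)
The plan is to use the description of the norm as a product over embeddings and to decompose the embeddings of the top field as compositions of embeddings from the intermediate steps. Set $n = [L:K]$ and $m = [M:L]$, and fix an algebraic closure $\overline{K}$ of $K$. Let $\sigma_1,\dots,\sigma_n : L \hookrightarrow \overline{K}$ be the $K$-embeddings of $L$, and let $\tau_1,\dots,\tau_m : M \hookrightarrow \overline{L}$ be the $L$-embeddings of $M$ into an algebraic closure $\overline{L}$ of $L$.

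The first step, which is the technical heart, is to parametrize the $K$-embeddings of $M$ into $\overline{K}$. For each $i$, since $\overline{K}$ is algebraically closed and $\overline{L}$ is algebraic over $L$, the embedding $\sigma_i : L \hookrightarrow \overline{K}$ extends (not uniquely, but we fix a choice) to an embedding $\bar{\sigma}_i : \overline{L} \hookrightarrow \overline{K}$. I claim that the $nm$ maps $\bar{\sigma}_i \circ \tau_j : M \hookrightarrow \overline{K}$ are pairwise distinct and are exactly the $K$-embeddings of $M$ into $\overline{K}$. Distinctness on $L$ forces the index $i$ to match (because $\bar\sigma_i\circ\tau_j|_L = \sigma_i$), and then injectivity of $\bar\sigma_i$ forces $\tau_j = \tau_{j'}$. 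Completeness is a count: separability of $M/K$ (automatic in characteristic zero) gives exactly $[M:K] = nm$ such embeddings, so the list exhausts them.

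Once this parametrization is established, the identity is a one-line calculation. For $x \in M$,
\[
N_{L/K}\bigl(N_{M/L}(x)\bigr) = \prod_{i=1}^{n} \sigma_i\!\left(\prod_{j=1}^{m} \tau_j(x)\right) = \prod_{i=1}^{n} \prod_{j=1}^{m} \bar{\sigma}_i\bigl(\tau_j(x)\bigr) = \prod_{\text{$K$-embeddings } \varphi : M \hookrightarrow \overline{K}} \varphi(x) = N_{M/K}(x),
\]
where in the second equality I used that $N_{M/L}(x) \in L$ so $\sigma_i$ and $\bar{\sigma}_i$ agree on it, and in the third I invoked the parametrization above.

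The only genuine obstacle is the bookkeeping in the second paragraph: one must confirm both distinctness and completeness of the family $\{\bar\sigma_i\circ\tau_j\}$, and for completeness the separability input is essential. In our setting $K$, $L$, $M$ are number fields, hence characteristic zero, so separability is free and the argument goes through unconditionally. All remaining manipulations are formal once the embeddings are in hand.
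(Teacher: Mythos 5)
Your argument is correct and is the standard textbook proof of norm transitivity via counting embeddings. The paper itself offers no proof of this statement; it cites it directly from Neukirch (Corollary I.2.7), so there is no in-paper argument to compare against. Your decomposition of the $K$-embeddings of $M$ as $\bar\sigma_i\circ\tau_j$, with distinctness checked by restriction to $L$ and completeness by the dimension count $[M:K]=nm$, is exactly the argument one finds in Neukirch and similar references. One small point worth flagging: you correctly note that separability is needed for the count of embeddings to equal the degree, and that this is automatic here since all fields in the paper are number fields (characteristic zero); in a general-field formulation one either assumes separability or replaces the product-of-embeddings definition of the norm with the determinant of the multiplication map, for which transitivity is proved differently. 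In the context of this paper your argument is complete.
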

For an element $x \in K$, a rational prime $l$ divides $N_{K/\Q}(x)$ if and only if there is a prime ideal $\mathfrak{l}$ of $K$ lying above $l$ such that $x \in \mathfrak{l}$. See the next subsection for a primer on ideals in number fields. One may efficiently compute norms if the minimal polynomial for an element is known. In particular, if $p(t) \in \Q[t]$ is a monic minimal polynomial for $x$, then $N_{K/\Q}(x) = (-1)^{\deg p} p(0)$. This observation is actually how we produce candidate primes that might ramify the relevant quaternion algebras.
\subsection{Rings of integers and prime ideals}
For a number field $K$, the integral closure of $\Z$ inside $K$ is called the \textbf{ring of integers} of $K$, and is often written $\mathcal{O}_K$. This ring may be more concretely identified as the set of elements of $K$ that satisfy a monic polynomial with coefficients in $\Z$. The ring of integers is often not a unique factorization domain, but its ideals uniquely factor into prime ideals. By a ``prime in a number field," we will mean a prime ideal in the ring of integers of that number field. For an extension $L/K$ of number fields and a prime $\mathfrak{l}$ of $\mathcal{O}_K$, the primes appearing in the factorization of $\mathfrak{l}\mathcal{O}_L$ are said to \textbf{lie above} $\mathfrak{l}$, and $\mathfrak{l}$ \textbf{lies below} those primes of $\mathcal{O}_L$.

For a prime $\mathfrak{l}$ of $\mathcal{O}_K$ with factorization
\[
	\mathfrak{l}\mathcal{O}_L = \prod_i \mathfrak{L_i}^{e_i},
\]
the integer $e_i$ is the \textbf{ramification index} of $\mathfrak{L_i}$ over $\mathfrak{l}$. There \textit{is} an analogy between ramification of prime ideals in extensions and ramification of quaternion algebras, but they are distinct concepts. The degree of the field extension $\mathcal{O}_L/\mathfrak{L_i}$ over $\mathcal{O}_K/\mathfrak{l}$ is the \textbf{inertia degree} of $\mathfrak{L_i}$ over $\mathfrak{l}$. A fundamental fact is that if $\mathfrak{l}$ splits into $r$ distinct primes in $L$ with inertia degrees $f_i$ and inertia degrees $e_i$, then $[L : K] = \sum\limits_{i=1}^{r}e_i f_i$. When $r=e_i=1$ for all $i$, we say that $\mathfrak{l}$ is \textbf{totally inert} or simply \textbf{inert}. When $L/K$ is Galois, the ramification indices and inertia degrees for any given prime $\mathfrak{L_i}$ above $\mathfrak{l}$ are the same as any other prime $\mathfrak{L_j}$ above $\mathfrak{l}$. In this setting, we simply write $e$ and $f$ for the ramification index and inertia degree of any prime above $\mathfrak{l}$, and we have $[L : K] = ref$, where $r$ is the number of distinct primes that $\mathfrak{l}$ splits into.
\subsection{Cyclotomic fields}
A \textbf{cyclotomic field} is a field extension obtained by adjoining roots of unity. All finite field extensions are cyclotomic. We shall write $\zeta_d$ for a primitive $d$-th root of unity, and by the $\mathbf{d}$\textbf{-th cyclotomic field}, we mean $\Q(\zeta_d)$, the field obtained by adjoining $\zeta_d$ to $\Q$. These fields and hence their subfields are abelian. For $d \geq 3$, $\Q(\zeta_d)$ is totally imaginary, but it does have a totally real subfield generated by $\zeta_d + \zeta_d^{-1}$. Moreover this subfield is maximal with respect to inclusion among totally real subfields, so it may be uniquely identified as the maximal totally real subfield of $\Q(\zeta_d)$. We will often write $2\cos(2\pi/d)$ for $\zeta_d + \zeta_d^{-1}$ without a particular embedding of $\zeta_d$ into $\C$ in mind. We will also often write $\Q(\zeta_d)^+$ in place of $\Q(2\cos(2\pi/d))$. The rings of integers of $\Q(\zeta_d)$ and $\Q(\zeta_d)^+$ are $\Z[\zeta_d]$ and $\Z[2\cos(2\pi/d)]$, respectively (see \cite[Theorem 2.6, Proposition 2.16]{WashingtonCycloFields}).  \par
We will use the following description of the splittings of primes in cyclotomic extensions.
\begin{thm}[{\cite[Proposition I.10.3]{NeukirchANT}}] \label{thm:cyclosplitting} Let $l$ be a rational prime comprime to $d$. Then $l$ factors into distinct primes in $\Z[\zeta_d]$ all with inertia degree equal to the multiplicative order of $l \Mod{d}$.
\end{thm}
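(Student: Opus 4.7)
The plan is to use the Galois-theoretic structure of $\Q(\zeta_d)/\Q$ to reduce the splitting behaviour of $l$ to a computation of the Frobenius in $\Gal(\Q(\zeta_d)/\Q) \cong (\Z/d\Z)^\times$. First I would record the standard fact that $\Q(\zeta_d)/\Q$ is Galois of degree $\varphi(d)$, and that the map sending a Galois automorphism $\sigma_a: \zeta_d \mapsto \zeta_d^a$ to the class of $a$ gives an isomorphism $\Gal(\Q(\zeta_d)/\Q) \cong (\Z/d\Z)^\times$. Because the extension is Galois, for any rational prime $l$ the factorisation $l\,\Z[\zeta_d] = (\mathfrak{L}_1 \cdots \mathfrak{L}_r)^e$ is balanced: all the $\mathfrak{L}_i$ share a common ramification index $e$ and inertia degree $f$, and $ref = \varphi(d)$. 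The content of the theorem is therefore (a) $e = 1$ when $l \nmid d$, and (b) $f$ equals the multiplicative order of $l$ modulo $d$.

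For (a), I would argue that $\Phi_d(x)$ has no repeated roots modulo $l$. Indeed, $\Phi_d$ divides $x^d - 1$, whose formal derivative $d x^{d-1}$ is coprime to $x^d - 1$ in $\mathbf{F}_l[x]$ once $l \nmid d$ (their resultant is a unit since $l \nmid d$ and the roots of $x^d - 1$ in $\overline{\mathbf{F}_l}$ are units). Hence $x^d - 1$ is separable over $\mathbf{F}_l$, so is $\Phi_d$, and by the Kummer–Dedekind correspondence applied to the monogenic ring $\Z[\zeta_d]$ the primes of $\Z[\zeta_d]$ above $l$ are unramified. Equivalently, the different of $\Z[\zeta_d]/\Z$ divides a power of $d$, so primes coprime to $d$ are unramified.

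For (b), let $\mathfrak{L}$ be any prime of $\Z[\zeta_d]$ above $l$ and let $\mathrm{Frob}_\mathfrak{L} \in \Gal(\Q(\zeta_d)/\Q)$ be the Frobenius, characterised by $\mathrm{Frob}_\mathfrak{L}(x) \equiv x^l \pmod{\mathfrak{L}}$ for all $x \in \Z[\zeta_d]$. Applying this to $\zeta_d$ gives $\mathrm{Frob}_\mathfrak{L}(\zeta_d) \equiv \zeta_d^l \pmod{\mathfrak{L}}$. Since the $d$-th roots of unity remain pairwise distinct modulo $\mathfrak{L}$ (because $x^d - 1$ is separable in residue characteristic $l$ by the previous paragraph), the congruence forces $\mathrm{Frob}_\mathfrak{L}(\zeta_d) = \zeta_d^l$, i.e.\ $\mathrm{Frob}_\mathfrak{L}$ corresponds to the class of $l$ in $(\Z/d\Z)^\times$. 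The inertia degree $f$ equals the order of the decomposition group, which since $e = 1$ equals the order of $\mathrm{Frob}_\mathfrak{L}$, namely the multiplicative order of $l$ modulo $d$. Then $r = \varphi(d)/f$ follows, completing the proof.

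The main subtlety is the separability step that simultaneously feeds both parts: one needs $l \nmid d$ to guarantee both the unramifiedness and the fact that distinct $d$-th roots of unity remain distinct modulo $\mathfrak{L}$, which is what lets one lift the Frobenius congruence $\zeta_d^l \equiv \mathrm{Frob}(\zeta_d)$ to an equality. Everything else is standard Galois theory of number fields.
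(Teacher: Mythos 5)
This statement is quoted in the paper as a citation to Neukirch, so the paper supplies no proof of its own to compare against. Your argument is correct and is the standard textbook treatment: for $l \nmid d$ the separability of $x^d - 1$ over $\mathbf{F}_l$ (since $\gcd(x^d-1, dx^{d-1}) = 1$) gives both the unramifiedness via Kummer--Dedekind on the monogenic ring $\Z[\zeta_d]$, and the injectivity of reduction on $d$-th roots of unity that lets you promote $\mathrm{Frob}_{\mathfrak{L}}(\zeta_d) \equiv \zeta_d^l \pmod{\mathfrak{L}}$ to an actual equality $\mathrm{Frob}_{\mathfrak{L}}(\zeta_d) = \zeta_d^l$. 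Identifying the Frobenius with the class of $l$ in $(\Z/d\Z)^\times$ then gives $f = \mathrm{ord}_d(l)$ and $r = \varphi(d)/f$. You correctly flag the one hypothesis carrying the whole weight, namely that $\Z[\zeta_d]$ is the full ring of integers (needed for Kummer--Dedekind), which the paper records separately. Neukirch's own proof is organized a bit differently---reducing to prime-power conductor and factoring $\Phi_d$ explicitly over $\mathbf{F}_l$---but the underlying content is the same, and the Frobenius-identification route you take is arguably the cleaner packaging.
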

We emphasize some special cases of the above theorem. A rational prime $l$ is totally split in $\Q(\zeta_d)$ if and only if it is $1 \Mod{d}$ and is totally inert if and only if it is a primitive root modulo $d$. It follows that if $l \in \Z$ is totally split in $\Q(\zeta_d)^+$ but is not equivalent to $1\Mod{d}$, then any prime $\mathfrak{l}$ of $\Q(\zeta_d)^+$ above $l$ is inert in $\Q(\zeta_d)$.
\section{Quaternion algebras, Azumaya algebras, and Brauer groups} \label{sec:brauer}
\subsection{Quaternion algebras over fields}

In this section we recall some facts about quaternion algebras and their generalization, Azumaya algebras. A \textbf{quaternion algebra} $A$ over a field $k$  is a $4$-dimensional central simple algebra over $k$. When $k$ has characteristic different from $2$, $A$ can be described as an $k$-vector space with basis $\{1, i, j, ij\}$ and algebra structure given by $i^2=a$, $j^2=b$, and $ij=-ji$ where $a,b \in k^{*}$. One may encode this information in a \textbf{Hilbert symbol} as $\HilbertSymbol{a}{b}{k}$. We will be primarily interested in the cases where $k$ is either a number field arising as a trace field of Dehn surgeries on a knot or the function field of the canonical component of the $\SL{2}{\C}$-character variety of a hyperbolic knot. The fundamental dichotomy for quaternion algebras is that they are either division algebras or matrix algebras. Let us collect some facts about quaternion algebras that we will use later.

\begin{prop}
	\begin{enumerate} Let $k$ be a field of characteristic not equal to $2$ and $a,b,x,y \in k^{*}$.
		\item $\HilbertSymbol{a}{b}{k} \cong \HilbertSymbol{b}{a}{k}$,
		\item $\HilbertSymbol{a}{1}{k} \cong \mathrm{M}_2(k)$,
		\item $\HilbertSymbol{ax^2}{by^2}{k} \cong \HilbertSymbol{a}{b}{k}$.
	\end{enumerate}
\end{prop}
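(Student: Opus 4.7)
The plan is to prove all three isomorphisms by writing down explicit $k$-algebra homomorphisms on generators and checking that the defining quaternion relations are preserved. Since each Hilbert symbol is a $4$-dimensional central simple $k$-algebra, a homomorphism that sends the standard basis to a $k$-linearly independent set is automatically an isomorphism (alternatively, by the universal property of the free algebra modulo the quaternion relations, any assignment of generators satisfying the defining relations extends uniquely to an algebra homomorphism, and such a map between two simple rings of the same dimension is forced to be an isomorphism). So in each case the real content is verifying the relations.

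For part (1), write $\{1,i,j,ij\}$ for the standard basis of $\HilbertSymbol{a}{b}{k}$ and $\{1,i',j',i'j'\}$ for that of $\HilbertSymbol{b}{a}{k}$. I would send $i\mapsto j'$ and $j\mapsto i'$. Then $i^2=a$ maps to $(j')^2=a$, $j^2=b$ maps to $(i')^2=b$, and $ij=-ji$ maps to $j'i'=-i'j'$, which holds. For part (3), let $i,j$ generate $\HilbertSymbol{a}{b}{k}$ and let $I,J$ generate $\HilbertSymbol{ax^2}{by^2}{k}$. I would send $I\mapsto xi$ and $J\mapsto yj$; then $I^2=ax^2$ maps to $(xi)^2=x^2a$, $J^2=by^2$ maps to $(yj)^2=y^2b$, and $IJ=-JI$ maps to $(xi)(yj)=xy\,ij=-xy\,ji=-(yj)(xi)$, using that $x,y\in k$ are central.

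For part (2), I would exhibit a concrete isomorphism to $\mathrm{M}_2(k)$ by defining
\[
i\longmapsto\begin{pmatrix}0 & a\\ 1 & 0\end{pmatrix},\qquad j\longmapsto\begin{pmatrix}1 & 0\\ 0 & -1\end{pmatrix},
\]
and verifying $i^2=aI_2$, $j^2=I_2$, and $ij=-ji$ by direct matrix computation. The images of $1,i,j,ij$ are then clearly $k$-linearly independent in $\mathrm{M}_2(k)$, so this map is an isomorphism of $4$-dimensional $k$-algebras.

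No step is a real obstacle here: each identity is a one-line computation, and the characteristic hypothesis $\Char k\neq 2$ is needed only implicitly to make the quaternion presentation in terms of the given generators sensible (so that $i$ and $j$ anticommute in a nondegenerate way). The only mild subtlety is the justification that a homomorphism respecting the presentation is automatically an isomorphism, which follows from the fact that both sides are central simple of the same dimension, or equivalently by noting that the image contains a $k$-basis.
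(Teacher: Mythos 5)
Your argument is correct, and it is the standard proof of these facts; the paper itself states this proposition without proof as a collection of well-known properties of quaternion algebras. Each of your verifications checks out: the generator swap $i\mapsto j'$, $j\mapsto i'$ for part~(1), the rescaling $I\mapsto xi$, $J\mapsto yj$ for part~(3) (using centrality of $x,y$), and the explicit matrices
\[
i\mapsto\begin{pmatrix}0 & a\\ 1 & 0\end{pmatrix},\qquad j\mapsto\begin{pmatrix}1 & 0\\ 0 & -1\end{pmatrix}
\]
for part~(2), whose span together with $I_2$ and the product contains all four elementary matrices since $a\neq 0$ and $\Char k\neq 2$, so the map is onto and hence an isomorphism of $4$-dimensional algebras. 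The justification that a surjection between central simple algebras of the same finite dimension is an isomorphism is exactly the right closing step.
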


A quaternion algebra $A$ over a number field $k$ is determined up to isomorphism by the places $\mathfrak{l}$ of $F
k$ for which $A_{\mathfrak{l}}=A\otimes_k k_{\mathfrak{l}}$ is a division algebra where $k_{\mathfrak{l}}$ is the completion of $k$ with respect to $\mathfrak{l}$. The set of such $\mathfrak{l}$ is finite and of even cardinality. We say $A$ is \textbf{ramified} at these places and \textbf{split} at all the others. By abuse of notation, we occasionally write $\HilbertSymbol{a}{b}{k_{\mathfrak{l}}} = -1$ when $A$ is ramified at $\mathfrak{l}$ and $+1$ when it is split. As a justification for this notation is that, we may break up the Hilbert symbols as follows.
\[
\HilbertSymbol{a}{bc}{k_{\mathfrak{l}}} = \HilbertSymbol{a}{b}{k_{\mathfrak{l}}}\HilbertSymbol{a}{c}{k_{\mathfrak{l}}},
\]
which is essentially equivalent to quadratic reciprocity. This multiplicative notation can also be understood as equivalence in the Brauer group. In particular, $\HilbertSymbol{a}{bc}{k}\otimes_k \mathrm{M}_2(k) \cong \HilbertSymbol{a}{b}{k}\otimes_k\HilbertSymbol{a}{c}{k}$, which is equivalence in the Brauer group. We will also use this multiplicative notation for number fields where it should be interpreted as an equality of ramification sets. That is, $\HilbertSymbol{a}{bc}{k} = \HilbertSymbol{a}{b}{k}\HilbertSymbol{a}{c}{k}$ means that $\HilbertSymbol{a}{bc}{k_{\mathfrak{l}}} = \HilbertSymbol{a}{b}{k_{\mathfrak{l}}}\HilbertSymbol{a}{c}{k_{\mathfrak{l}}}$ for every prime $\mathfrak{l}$ of $k$. There is also an efficient way to compute ramification sets, which is another avatar of quadratic reciprocity.

\begin{thm}[{\cite[Theorem 2.2.6(b)]{MR}}] \label{thm:MRlocalSymbol}
	Let $A$ be a quaternion algebra over a nondyadic $\mathfrak{l}$-adic field $k_{\mathfrak{l}}$ with ring of integers $\mathcal{O}$ and maximal ideal $\mathfrak{l}$. Let $A=\HilbertSymbol{a}{b}{k_{\mathfrak{l}}}$ with $a,b \in \mathcal{O}$. If $a \notin \mathfrak{l}$ and $b \in \mathfrak{l}\setminus\mathfrak{l}^2$, then $A$ splits if and only if $a$ is a square modulo $\mathfrak{l}$.
\end{thm}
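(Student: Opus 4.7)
The plan is to deduce the statement from the standard norm criterion for splitting of a quaternion algebra: $\HilbertSymbol{a}{b}{k_{\mathfrak{l}}}$ is split if and only if $b$ is a square in $k_{\mathfrak{l}}$ or $a$ lies in the image of the norm map $N_{k_{\mathfrak{l}}(\sqrt{b})/k_{\mathfrak{l}}}$. Since $b \in \mathfrak{l}\setminus \mathfrak{l}^2$ has valuation exactly $1$, it is not a square in $k_{\mathfrak{l}}$, so $L = k_{\mathfrak{l}}(\sqrt{b})$ is a (ramified) quadratic extension, and the question reduces to deciding whether the unit $a$ is a norm $x^2 - by^2$ with $x,y \in k_{\mathfrak{l}}$.

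First I would do the easy direction. Suppose $a$ is a square modulo $\mathfrak{l}$. Since $a \in \mathcal{O}^{\times}$ and $k_{\mathfrak{l}}$ is nondyadic, Hensel's lemma applied to $T^2 - a$ (whose derivative $2T$ is a unit at any lift of a mod-$\mathfrak{l}$ square root of $a$) shows $a$ is a square in $k_{\mathfrak{l}}$. Writing $a = c^2$, we get $a = N_{L/k_{\mathfrak{l}}}(c)$, so the algebra splits.

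For the converse, suppose $a = x^2 - b y^2$ for some $x,y \in k_{\mathfrak{l}}$. Let $v$ denote the normalized valuation on $k_{\mathfrak{l}}$, so $v(a) = 0$ and $v(b) = 1$. The two summands have valuations $2v(x)$ and $1 + 2v(y)$, which lie in disjoint residue classes modulo $2$; hence there is no cancellation and
\[
0 = v(a) = \min\bigl(2v(x),\, 1 + 2v(y)\bigr).
\]
The minimum cannot be $1 + 2v(y)$ (which is odd), so $2v(x) = 0$ and $1 + 2v(y) \geq 0$, i.e.\ $x \in \mathcal{O}^{\times}$ and $y \in \mathcal{O}$. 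Reducing $a = x^2 - by^2$ modulo $\mathfrak{l}$ and using $b \in \mathfrak{l}$ yields $\bar{a} = \bar{x}^2$ in the residue field, so $a$ is a square modulo $\mathfrak{l}$.

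The only real subtlety is the Hensel application and the valuation bookkeeping that forces $x,y$ to be integral; once the valuation parities are separated, there is no obstacle. The norm-criterion input itself is standard (it follows from the fact that the reduced norm form of $A$ represents zero nontrivially iff $A \cong \mathrm{M}_2(k_{\mathfrak{l}})$, combined with completing the square on the $i,j,ij$-variables), so I would simply cite it rather than reprove it.
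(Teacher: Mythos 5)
The paper cites this result as \cite[Theorem 2.2.6(b)]{MR} and does not give a proof, so there is no in-paper argument to compare against. Your proof is correct and is the standard one: the norm criterion reduces splitting to $a$ being a norm from the ramified quadratic extension $L = k_{\mathfrak{l}}(\sqrt{b})$; Hensel's lemma (available because $2 \in \mathcal{O}^{\times}$ in the nondyadic case and $a$ is a unit) gives the direction from ``$a$ square mod $\mathfrak{l}$'' to ``$a$ a square in $k_{\mathfrak{l}}$'' and hence a norm; and in the converse direction the valuation-parity observation that $2v(x)$ and $1+2v(y)$ can never coincide cleanly forces $x \in \mathcal{O}^{\times}$, $y \in \mathcal{O}$ in any representation $a = x^2 - by^2$, after which reduction mod $\mathfrak{l}$ finishes. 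One minor stylistic point: it is worth saying explicitly that $x = 0$ is impossible (it would make $v(a)$ odd, whereas $v(a)=0$), and that $y = 0$ is harmless (then $a = x^2$ is already a square); both cases are handled by the $\min$-formula if one adopts $v(0)=\infty$, but spelling this out removes any doubt.
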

Let us also point out that quaternion algebras generate the $2$-torsion of the Brauer group of the field $k$.
\subsection{Azumaya algebras} \label{subsec:azumaya}

For a Noetherian scheme $X$, its \textbf{Brauer group} $\Br X$ is defined to be $H^2_{\mathrm{\acute{e}t}}(X, \mathbf{G}_m)$. We will have no need for the details of \'{e}tale cohomology, and in this paper one may think of $X$ as being the canonical component of a hyperbolic knot, or a smooth model thereof.  \par 

We now wish to define a generalization of quaternion algebras over fields. Let $\mathscr{O}_X$ be the structure sheaf of $X$ so that $\mathscr{O}(U)_X$ is the ring of regular functions on $U$. A coherent sheaf of $\mathscr{O}_X$ algebras is a sheaf $\mathcal{F}$ of abelian groups on $X$ such that $\mathcal{F}(U)$ is a finitely generated $\mathscr{O}_X$ algebra and the restriction maps are compatible with the algebra structure. Moreover, $\mathcal{F}$ is locally free if it has an open covering by sets $U$ such that $\mathcal{F}|_{U}$ is a free $\mathscr{O}_X|_U$ module. An \textbf{quaternion Azumaya algebra} is a nonzero $\mathscr{O}_X$-algebra that is locally free of rank $4$. \par
The connection to quaternion algebras over fields is that when $X$ is a variety over a field $k$ of characteristic not equal to $2$ and $\mathcal{A}$ is a quaternion Azumaya algebra, there is a finite open covering of $X$ by sets $U$ such that for each $U$, there is an isomorphism of $\mathscr{O}_X|_U$-modules
\[
\mathcal{A}|_U \cong \mathscr{O}|_U \oplus i\mathscr{O}|_U \oplus j\mathscr{O}|_U \oplus ij\mathscr{O}|_U,
\]
where $i^2 = f_U$, $j^2 = g_U$, and $ij=-ji$ for $f_U,g_U \in k[U]^{*}$. So quaternion Azumaya algebras locally look like quaternion algebras. Moreover, one can take the fiber at a point $\mathcal{A}(x) = \mathcal{A} \otimes_{\mathscr{O}_X} k(x)$ where $k(x)$ is the residue field to obtain a quaternion algebra over the residue field. \par
Writing $k(X)$ for the function field of $X$, there is a canonical injection $\Br X \hookrightarrow \Br k(X)$ and an exact sequence which determines its image.
\begin{thm}[{\cite[Theorem 6.8.3.]{QPoints}}] \label{thm:cohomologicalPurity}
	Let $X$ be a regular integral Noetherian scheme. Let $X^{(1)}$ be the set of codimension $1$ points of $X$. Then the sequence
	\[
		0 \longrightarrow \Br X \longrightarrow \Br k(X) \xrightarrow{\mathrm{res}} \bigoplus_{x \in X^{(1)}} H^1(k(x), \Q/\Z) 
	\]
	is exact with the caveat that one must exclude the $p$-primary part of all the groups if $X$ is of dimension $\leq 1$ and some $k(x)$ is imperfect of characteristic $p$, or if $X$ is of dimension $\geq 2$ and some $k(x)$ is of characteristic $p$.
\end{thm}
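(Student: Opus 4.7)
The plan is to establish exactness by explicitly constructing residue homomorphisms at each codimension-one point of $X$ and then invoking cohomological purity to identify the image of $\Br X$ inside $\Br k(X)$. First I would construct, for each $x \in X^{(1)}$, a residue map $\partial_x : \Br k(X) \to H^1(k(x), \Q/\Z)$. Since $X$ is regular, $\mathcal{O}_{X,x}$ is a discrete valuation ring, and on $\Spec \mathcal{O}_{X,x}$ the divisor exact sequence of \'etale sheaves
\[
1 \to \mathbf{G}_m \to j_* \mathbf{G}_{m,k(X)} \to i_{x,*}\Z \to 1
\]
(with $j$ and $i_x$ the inclusions of the generic and closed points) yields on cohomology a boundary $H^2(k(X), \mathbf{G}_m) \to H^2(k(x), \Z)$; combining with the identification $H^2(k(x), \Z) \cong H^1(k(x), \Q/\Z)$ coming from $0 \to \Z \to \Q \to \Q/\Z \to 0$ and the vanishing of positive-degree $\Q$-cohomology of profinite groups defines $\partial_x$.

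Second, I would establish injectivity of $\Br X \to \Br k(X)$. This is the Auslander--Goldman theorem: on a regular Noetherian scheme, an Azumaya algebra is determined up to Brauer equivalence by its generic fiber. Cohomologically, one shows $H^2_{\mathrm{\acute{e}t}}(\Spec A, \mathbf{G}_m) \hookrightarrow H^2_{\mathrm{\acute{e}t}}(\Spec \Frac A, \mathbf{G}_m)$ for any regular local $A$ via local-cohomology vanishing in low codimension, and globalizes by covering. That the image of $\Br X$ lies in $\ker\bigl(\bigoplus \partial_x\bigr)$ is automatic from the construction: any $\alpha \in \Br X$ extends to $\Br \mathcal{O}_{X,x}$ and therefore dies under the boundary map associated to the short exact sequence above.

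The main obstacle is the reverse containment, that any class in $\Br k(X)$ with vanishing residues at all codimension-one points actually extends to an element of $\Br X$. This is absolute cohomological purity for the Brauer group --- classical in dimension at most $2$ (Grothendieck) and due to Gabber in general. The proof proceeds by induction on codimension, reducing via the Gysin excision sequence at each stage to the case of a smooth closed pair $Z \hookrightarrow X$ of codimension $c$ and appealing to the purity isomorphism
\[
H^{n}_Z(X, \mu_m^{\otimes r}) \cong H^{n-2c}(Z, \mu_m^{\otimes (r-c)}),
\]
which requires $m$ to be invertible on $X$. That invertibility hypothesis is precisely the source of the characteristic caveat in the statement: when a residue characteristic $p$ appears as a torsion order, the purity isomorphism fails for $\mu_{p^k}$-coefficients, so the argument only pins down the prime-to-$p$ part of the Brauer group and the $p$-primary components must be excluded under the stated hypotheses.
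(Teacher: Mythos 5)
The paper does not prove this theorem. It is quoted verbatim from Poonen's book (\cite[Theorem 6.8.3]{QPoints}), and the paper explicitly attributes it to Gabber as a resolution of a conjecture of Grothendieck; the theorem is then used as a black box in the discussion of Azumaya algebras and in the proof strategy for Proposition~\ref{prop:HilbertSymbolForKnot}. So there is no in-paper proof to compare your attempt against.

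That said, your outline is a reasonable high-level account of the standard architecture. You correctly identify the three ingredients: the residue maps at codimension-one points (built from the divisor exact sequence $1 \to \mathbf{G}_m \to j_*\mathbf{G}_{m,k(X)} \to i_{x,*}\Z \to 1$ over the local ring, together with $H^2(k(x),\Z)\cong H^1(k(x),\Q/\Z)$), the Auslander--Goldman injectivity $\Br X \hookrightarrow \Br k(X)$, and the fact that identifying the kernel of $\bigoplus\partial_x$ with $\Br X$ is where the genuine depth lies, namely Gabber's absolute cohomological purity, with the failure of $\mu_m$-purity for $m$ not invertible explaining the need to discard certain $p$-primary pieces. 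Two caveats worth flagging if you wanted to tighten the sketch. First, the identification $H^2(\Spec\mathcal{O}_{X,x}, j_*\mathbf{G}_m)\cong H^2(k(X),\mathbf{G}_m)$, which you use implicitly to get a map \emph{out of} $\Br k(X)$, requires controlling the higher direct images $R^q j_*\mathbf{G}_m$ (Hilbert~90 handles $q=1$, but more is needed), and this is not automatic. Second, your explanation of the characteristic caveat is too coarse to account for the actual dichotomy in the statement: the theorem excludes the $p$-primary part in dimension $\le 1$ only when some residue field is \emph{imperfect} of characteristic $p$, whereas in dimension $\ge 2$ any residue characteristic $p$ forces the exclusion; the ``$m$ not invertible breaks purity'' slogan does not by itself distinguish these two regimes, and the dimension-$\le 1$ case rests on different (essentially valuation-theoretic) input rather than on Gysin purity. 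Given that the paper cites rather than proves the result, none of this affects the paper, but it is worth knowing if you intend to pursue a full argument.
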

This theorem is known as absolute cohomological purity. It was conjectured by Grothendieck and proved by Gabber, though the above formulation appears in \cite{QPoints}. The last arrow is the residue homomorphism to the Galois cohomology group $H^1(k(x), \Q/\Z) = H^1(\Gal(k(x)^{sep}/k(x), \Q/\Z)$. We say that a quaternion algebra $A_{k(X)}$ defined over the function field $k(X)$ ``extends" over a point $x \in X$ if its residue is trivial at $x$. The exact sequence in Theorem \ref{thm:cohomologicalPurity} says that $A_{k(X)}$ extends to a quaternion Azumaya algebra if and only if its residue is trivial everywhere. For quaternion algebras over function fields, the residues may be calculated using a tame symbol at least when the residue field has characteristic different from $2$. Namely, let $\alpha, \beta \in k(X)$ and $x$ be a codimension $1$ point, and write
\[
	\{\alpha, \beta\} = (-1)^{\ord_x(\alpha)\ord_x(\beta)}\beta^{\ord(\alpha)}/\alpha^{\ord(\beta)} \in k(x)^{*}/k(x)^{*^2},
\]
where $\ord_x(\alpha)$ is the order of vanishing of $\alpha$ at $x$. Then this class in $k(x)^{*}/k(x)^{*^2}$ is equal to the residue of $\HilbertSymbol{\alpha}{\beta}{k(X)}$ at $x$. In particular, if this class is the trivial square class, then the algebra extends over $x$. See \cite[\S 2]{CTetalMumbai} for details. \par
An important property of Azumaya algebras of varieties over number fields is that their fibers, which are quaternion algebras over number fields, can only ramify at a finite set of places. We give the statement that appears in \cite{SkoroNotes} Theorem 2.4(2), though we point out that it holds for any Azumaya algebra, not just the quaternion ones.
\begin{thm} \label{thm:resultatClassique}
	Let $X$ be a smooth projective irreducible variety over a number field $k$, and let $\mathcal{A}$ be a quaternion Azumaya algebra on $X$. Then, for almost all places $\mathfrak{l}$, we have $\mathcal{A}(P) \cong \mathrm{M}_2(k_{\mathfrak{l}})$ for all $P \in X(k_{\mathfrak{l}})$.
\end{thm}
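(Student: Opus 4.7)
The plan is to prove this by a spreading-out argument, combined with the vanishing of Brauer groups of complete local rings having finite residue fields.

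First, I would argue that the pair $(X, \mathcal{A})$ admits a good integral model away from a finite set of places. Because $X$ is a smooth projective variety over $k$ and $\mathcal{A}$ is a coherent locally free sheaf of algebras, standard finite-type descent (of the kind carried out in EGA IV, \S 8) furnishes a finite set $S$ of places of $k$, containing the archimedean ones, such that there exists a smooth projective model $\pi \colon \mathcal{X} \to \Spec \mathcal{O}_{k,S}$ with generic fiber $X$, together with a quaternion Azumaya algebra $\mathscr{A}$ on $\mathcal{X}$ whose generic fiber is $\mathcal{A}$. One enlarges $S$ finitely many times so that all the relevant smoothness, properness, and local freeness conditions propagate.

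Next, for any $\mathfrak{l} \notin S$ and any $P \in X(k_{\mathfrak{l}})$, I would apply the valuative criterion of properness to $\pi$: since $\mathcal{O}_{\mathfrak{l}}$ is a discrete valuation ring with fraction field $k_{\mathfrak{l}}$, the morphism $\Spec k_{\mathfrak{l}} \to \mathcal{X}$ determined by $P$ extends uniquely to a morphism $\tilde{P} \colon \Spec \mathcal{O}_{\mathfrak{l}} \to \mathcal{X}$. Pulling back $\mathscr{A}$ along $\tilde{P}$ yields an Azumaya algebra $\tilde{P}^{*}\mathscr{A}$ over $\mathcal{O}_{\mathfrak{l}}$ whose generic fiber is exactly $\mathcal{A}(P)$.

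Finally, I would invoke the vanishing of $\Br \mathcal{O}_{\mathfrak{l}}$ for $\mathfrak{l}$ nonarchimedean. By the henselian property of complete discrete valuation rings one has $\Br \mathcal{O}_{\mathfrak{l}} \cong \Br \kappa_{\mathfrak{l}}$, where $\kappa_{\mathfrak{l}}$ is the finite residue field, and by Wedderburn's theorem on finite division rings this latter group is trivial. Hence $\tilde{P}^{*}\mathscr{A}$ represents the trivial Brauer class, so $\tilde{P}^{*}\mathscr{A} \cong \mathrm{M}_{2}(\mathcal{O}_{\mathfrak{l}})$, and passing to the generic fiber gives $\mathcal{A}(P) \cong \mathrm{M}_{2}(k_{\mathfrak{l}})$ as required. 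The main obstacle is the spreading-out step: one must verify that smoothness, properness, local freeness of rank $4$, and the Azumaya property all descend simultaneously to a single integral model, which is what forces the excluded set $S$ to be finite but not necessarily explicit. Once the model exists, the remaining arguments are the formal valuative criterion and classical Brauer-theoretic input.
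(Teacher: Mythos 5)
The paper does not prove this theorem itself; it is quoted directly from Skorobogatov's notes (cited as [SkoroNotes, Theorem 2.4(2)]) with the remark that the statement holds for Azumaya algebras of any rank, not just quaternion ones. Your proposal reproduces exactly the standard argument one would find behind that reference: spread $(X,\mathcal{A})$ out over $\Spec\mathcal{O}_{k,S}$ for a suitable finite $S$, use the valuative criterion of properness to extend a $k_{\mathfrak{l}}$-point to an $\mathcal{O}_{\mathfrak{l}}$-point of the integral model for $\mathfrak{l}\notin S$, pull back the integral Azumaya algebra, and conclude by the vanishing $\Br\mathcal{O}_{\mathfrak{l}}\cong\Br\kappa_{\mathfrak{l}}=0$ (henselian invariance of the Brauer group together with Wedderburn's theorem). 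The only minor point worth making explicit is the final identification $\tilde{P}^{*}\mathscr{A}\cong\mathrm{M}_{2}(\mathcal{O}_{\mathfrak{l}})$: Brauer triviality gives $\tilde{P}^{*}\mathscr{A}\cong\mathrm{End}_{\mathcal{O}_{\mathfrak{l}}}(V)$ for a projective $\mathcal{O}_{\mathfrak{l}}$-module $V$, and then one uses that $\mathcal{O}_{\mathfrak{l}}$ is local (so $V$ is free) and a rank count to pin down $V\cong\mathcal{O}_{\mathfrak{l}}^{2}$. Your write-up skips that sentence but the idea is clearly there. In short, your proof is correct and is the same argument as the cited source; nothing essential is missing.
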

However, work of Harari (\cite{Harari94}) shows that if $A_{k(X)}$ has a residue, then there are infinitely many places $\mathfrak{l}$ for which there is a nontrivial fiber at some local point.
\subsection{Connection to Kleinian groups} We now explain the connection between the Azumaya algebra machinery and Kleinian groups. We refer the reader to \cite{CRS} for more details. Let $C$ denote the canonical component of a hyperbolic knot $K$. As mentioned in the introduction, there is always a quaternion algebra defined over the function field of the canonical component that specializes at a character of a hyperbolic Dehn surgery to the usual quaternion algebra of a Kleinian group. A natural question to ask is whether this quaternion algebra extends to an Azumaya algebra. The answer for hyperbolic knot complements is that it extends if and only if the Alexander polynomial satisfies condition $(\star)$ of \cite{CRS}.
\begin{thm}[\cite{CRS} Theorems 1.2., 1.4.] \label{thm:1.2OfCRS} Let $K$ be a hyperbolic knot with $\Gamma = \pi_1\left(S^3 \backslash K\right)$, and suppose that $\Delta_K$ satisfies condition $(\star)$. Then
	\begin{enumerate}
		\item $A_{k(C)}$ comes from an Azumaya algebra in $\Br \tilde{C}$ where $\tilde{C}$ denotes the normalization of the projective closure of $C$. \\
		\item Furthermore, if the canonical component is defined over $\Q$, there exists a finite set $S_K$ of rational primes such that, for any hyperbolic Dehn surgery $N$ on $K$ with trace field $k_N$, the $k_N$-quaternion algebra $A_N$ can only ramify at real places of $k_N$ and finite places lying over primes in $S_K$. 
	\end{enumerate} 
\end{thm}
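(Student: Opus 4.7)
The statement has two distinct parts, so the plan naturally splits in two. For Part (1), the key is to reduce the global Azumaya extension question to a finite collection of local residue vanishings via Theorem \ref{thm:cohomologicalPurity} (cohomological purity). Applied to the smooth projective curve $\tilde{C}$, the codimension-$1$ points are just the closed points, and $A_{k(C)}$ extends to an element of $\Br \tilde{C}$ if and only if its residue vanishes at each one. By Proposition \ref{CRSFunctionFieldHilbertSymbol}, we can take the concrete Hilbert symbol $\HilbertSymbol{I_g^2-4}{I_{[g,h]}-2}{k(C)}$ and compute its residue at $x \in \tilde{C}$ by the tame symbol $\{I_g^2-4, I_{[g,h]}-2\} \in k(x)^*/k(x)^{*2}$. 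At any closed point where neither $I_g^2-4$ nor $I_{[g,h]}-2$ has a zero or pole, the tame symbol is automatically trivial, so the only points requiring care are those where one of these regular functions degenerates—namely the characters of reducible representations and the ideal/boundary points introduced by normalizing the projective closure.

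The main obstacle is handling the reducible characters. On the canonical component, these are precisely the intersections with the locus $R=2$ (equivalently $r=0$ in the coordinates set up in Section \ref{sec:charVars}), and by Lemma \ref{lem:constantTerm}, these intersection points are cut out by (a transform of) the Alexander polynomial $\Delta_K$. At such a point, $I_{[g,h]}-2$ acquires a zero, so the tame symbol becomes a nontrivial order computation whose class modulo squares depends explicitly on the arithmetic of $\Delta_K$ at its roots. Condition $(\star)$ is designed precisely to guarantee that the resulting class in $k(x)^*/k(x)^{*2}$ is always a square, making the residue trivial. At the ideal points of the character variety one can perform an analogous analysis: both functions are evaluated against uniformizers coming from the geometry at infinity, and one shows the tame symbol is again square-trivial. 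Once every residue vanishes, cohomological purity produces a unique Azumaya algebra $\mathcal{A} \in \Br \tilde{C}$ restricting to $A_{k(C)}$ on the generic fiber.

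For Part (2), with $\mathcal{A}$ in hand and the hypothesis that $\tilde{C}$ is defined over $\Q$, I would apply Theorem \ref{thm:resultatClassique} directly: this yields a finite set $S_K$ of rational primes such that for every place $\mathfrak{l}$ of any number field with $\mathfrak{l} \nmid p$ for all $p \in S_K$, the fiber $\mathcal{A}(P)$ is split for every $k_\mathfrak{l}$-point $P$ of $\tilde{C}$. A hyperbolic Dehn surgery $N$ produces a discrete faithful character sitting as an honest point $\chi_N \in \tilde{C}(k_N)$, and the specialization $\mathcal{A}(\chi_N)$ is exactly the Kleinian quaternion algebra $A_N$ by construction. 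Applying the theorem completion by completion to each finite prime $\mathfrak{l}$ of $k_N$ not lying over $S_K$ shows that $A_N$ must be split at every such $\mathfrak{l}$; the real archimedean places are not controlled by Theorem \ref{thm:resultatClassique} and must therefore be allowed in the statement.

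The hard step is unquestionably the residue computation at the reducible characters in Part (1); once condition $(\star)$ is translated into a precise square-class identity via Lemma \ref{lem:constantTerm}, Part (2) is a clean application of the general finiteness result for Azumaya algebras over arithmetic varieties.
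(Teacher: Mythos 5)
This statement is cited from Chinburg--Reid--Stover (\cite{CRS}, Theorems 1.2 and 1.4); the present paper does not prove it, so there is no in-paper proof to compare against. Your outline does follow the strategy of the CRS argument: cohomological purity reduces the Azumaya extension question to residue vanishing at codimension-one points of $\tilde C$, the residues are automatically trivial away from reducible characters and ideal points, condition $(\star)$ controls the residues at the remaining points, and the finiteness of $S_K$ in Part (2) is the standard spread-out/good-reduction argument for Azumaya algebras over arithmetic varieties (Theorem \ref{thm:resultatClassique}).

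That said, two things in your writeup need correcting. First, you invoke Lemma \ref{lem:constantTerm} to say the reducible characters on $C$ are cut out by the Alexander polynomial; that lemma is proved only for the twist knots $K_t$, whereas the statement concerns an arbitrary hyperbolic knot. The fact you want is the classical Burde--de Rham result relating reducible non-abelian characters to roots of $\Delta_K$, which is the true input to condition $(\star)$, and the precise content of $(\star)$ (that $\Delta_K(\xi^2)$ is a square in $\Q(\xi)^{*}$ for each root $\xi$, up to the normalizations CRS use) is exactly what makes the tame symbol a square; you gesture at this but don't state it. Second, in Part (2) the phrase ``honest point $\chi_N\in\tilde C(k_N)$'' and the subsequent ``completion by completion'' application glide over the point that Theorem \ref{thm:resultatClassique} as stated produces a finite set of bad places for a \emph{fixed} base field, while the theorem you need must be uniform in $k_N$ as $N$ varies; one obtains this by spreading out $\mathcal A$ to an Azumaya algebra over a regular model of $\tilde C$ over $\Z[1/N]$ and then observing that fibers at $\mathcal O_{k_{N,\mathfrak l}}$-points are automatically split for $\mathfrak l$ above a prime of good reduction. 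Also, your account omits the analysis at ideal points of $\tilde C$ beyond a single sentence, and in CRS this is a genuine step (it is where one uses that the trace field of the cusp is a totally real field in which the relevant residues live); as written the proposal treats the ideal points as a footnote when they are part of the content. The overall architecture is right, but the two hard ingredients---the residue calculation under $(\star)$ and the uniform spreading-out---are where the proof actually lives, and both are only sketched.
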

For example, the authors calculate in \cite{CRS} that the figure-eight knot can have only real and dyadic ramification. There is also a partial converse in \cite{CRS} (Theorems 1.2, 1.4), namely that $A_{k(C)}$ does not extend when the knot fails condition $(\star)$. With the results of \cite{Harari94}, this implies that one can obtain ramification above infinitely many rational primes by specializing $A_{k(C)}$. However, these points \textit{a priori} need not be interesting from the point of view of geometric structures. Experimental evidence led them to conjecture (Conjecture \ref{conj:inf_bad}) that when the knot fails condition $(\star)$, there should be ramification above infinitely many rational primes, even when one restricts attention to Dehn surgery points.

\subsection{Ramification for Dehn surgery points}
We now prove that the ramification of the specializations to $(d,0)$ surgery can be expressed in terms of the following Hilbert symbol. Throughout, we write $r_d$ for the algebraic number that $r$ specializes to at $(d,0)$ surgeries to avoid confusion with the coordinate $r=2-R$.

\begin{prop} \label{prop:HilbertSymbolForKnot}
	Let $d$ be an odd positive integer that is not a power of a prime and suppose that $K$ is a $2$-bridge knot whose Alexander polynomial has only simple roots and is Azumaya negative. Let $k_d$ be the trace field of the $(d,0)$ surgery and $r_d$ as above.
	Then, there is a finite set $S$ of rational primes such that for a prime $\mathfrak{L}$ of $k_d$ lying above a prime not in $S$, the ramification of the (invariant) quaternion algebra for the $(d,0)$ surgery on $K$ agrees at $\mathfrak{L}$ with Hilbert symbol
	\[
	\HilbertSymbol{2\cos(2\pi/d)-2}{-r_d}{k_d}.
	\]
\end{prop}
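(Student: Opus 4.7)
The plan is to start from the function-field presentation of the canonical quaternion algebra (Proposition \ref{CRSFunctionFieldHilbertSymbol}), specialize to the $(d,0)$ surgery character, and then use multiplicativity of the Hilbert symbol to absorb the difference between the resulting symbol and the advertised one into a residual symbol that is locally trivial away from a finite set of primes.

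Take $g = a$ and $h = b$, the two meridians of the two-bridge knot group. Since $\{a,b\}$ generates $\pi_1(S^3\setminus K)$, the irreducibility hypothesis of Proposition \ref{CRSFunctionFieldHilbertSymbol} holds on the canonical component. From the matrix parametrization of Section \ref{sec:charVars} one has $I_a = Z$, and a direct computation gives $\chi_\rho(ab) = Z^2 - 2 + r$; combined with the Fricke trace identity $\tr[X,Y] = (\tr X)^2 + (\tr Y)^2 + (\tr XY)^2 - (\tr X)(\tr Y)(\tr XY) - 2$ this yields
\[
I_{[a,b]} - 2 = r(Z^2 + r - 4).
\]
Hence $A_{k(C)} \cong \HilbertSymbol{Z^2 - 4}{r(Z^2 + r - 4)}{k(C)}$. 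At the character of a $(d,0)$ hyperbolic Dehn surgery the meridian becomes an order-$d$ elliptic, so $Z$ specializes to $\pm 2\cos(\pi/d)$ and $Z^2 - 4$ specializes to $\alpha = 2\cos(2\pi/d) - 2$. Writing $r_d$ for the specialization of $r$, the invariant quaternion algebra of the surgery is described by
\[
\HilbertSymbol{\alpha}{r_d(\alpha + r_d)}{k_d}.
\]

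To compare this with the target $\HilbertSymbol{\alpha}{-r_d}{k_d}$, use that $r_d^2$ is a square and multiplicativity of the Hilbert symbol: the product of these two symbols in $\Br k_d$ equals $\HilbertSymbol{\alpha}{-(\alpha + r_d)}{k_d}$. Define $S$ to consist of the rational prime $2$ together with every rational prime lying under a prime of $k_d$ that divides $\alpha$, $r_d$, or $\alpha + r_d$, along with the (finitely many) primes at which the specialization of the function-field Hilbert symbol fails. For any prime $\mathfrak{L}$ of $k_d$ not lying over a prime of $S$, both $\alpha$ and $-(\alpha + r_d)$ are $\mathfrak{L}$-adic units in a non-dyadic local field, and it is standard (a Hensel's-lemma argument complementing Theorem \ref{thm:MRlocalSymbol}) that the local Hilbert symbol of two units over such a field is trivial. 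Hence $\HilbertSymbol{\alpha}{-(\alpha + r_d)}{(k_d)_\mathfrak{L}}$ is trivial, so the ramifications of the invariant quaternion algebra and of $\HilbertSymbol{\alpha}{-r_d}{k_d}$ coincide at every such $\mathfrak{L}$.

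The main obstacle I anticipate is the specialization step: one must argue that the function-field Hilbert symbol truly descends to the invariant quaternion algebra of the $(d,0)$ Kleinian group (rather than to some non-isomorphic quaternion algebra over $k_d$) and that $r(Z^2 + r - 4)$ does not vanish at the $(d,0)$ point, so that the symbol is well defined. Both points should follow from the hyperbolicity of the surgery together with the simple-root hypothesis on $\Delta_K$, which prevents the canonical component from behaving pathologically at the surgery point. The Azumaya-negative hypothesis plays no direct role in the comparison itself; it only ensures the statement is non-vacuous, since under condition $(\star)$, Theorem \ref{thm:1.2OfCRS} already provides a finite $S$ via a global-to-local argument on $\tilde C$.
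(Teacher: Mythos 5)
Your decomposition is the same as the paper's---both isolate $B = \HilbertSymbol{Z^2-4}{R-2}{k(C)}$, which specializes to the target, and a residual factor $C$ that specializes to $\HilbertSymbol{\alpha}{-(\alpha+r_d)}{k_d}$. The gap is in how you bound the residual factor's ramification. You place into $S$ every rational prime under a prime of $k_d$ dividing $r_d$; but those are exactly the primes the rest of the argument needs the proposition for, since Lemma \ref{lem:HilbertSymbolTotallyReal} produces ramification only at primes dividing $-r_d$. At any $\mathfrak{L}$ not over your $S$, both $\alpha$ and $-r_d$ are units, so the target symbol $\HilbertSymbol{\alpha}{-r_d}{k_d}$ is already split at $\mathfrak{L}$; your conclusion collapses to ``both sides are split where both sides are obviously split,'' which is useless for Theorem \ref{thm:main}. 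The repair requires the one hypothesis you never touch: because $d$ is not a prime power, $\alpha = 2\cos(2\pi/d)-2$ is a global unit of $\Q(\zeta_d)^{+}$ (its absolute norm is $\pm\Phi_d(1) = \pm 1$), so at any non-dyadic $\mathfrak{L}\mid r_d$ the element $\alpha + r_d \equiv \alpha \Mod{\mathfrak{L}}$ is a unit as well, the residual symbol is split there, and these primes never needed to be excluded from $S$.

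Even once patched, your $S$ depends on $d$ (through the divisors of $\alpha + r_d$). The paper's proof is structurally different: it verifies that $C$ has trivial residue at every codimension-one point of $\tilde{C}$ --- this is the actual computational content, e.g.\ the meridian-conjugation argument showing $R-2$ is a global square when $Z = \pm 2$ --- so $C$ extends to a quaternion Azumaya algebra, and Theorem \ref{thm:resultatClassique} then furnishes a single finite $S$ valid simultaneously for every specialization point. That uniformity in $d$ is what the remark following the proposition rests on. Your purely local argument, with the unit observation supplied, could be massaged into serving the application, but as written it proves a weaker statement, fails at the primes of interest, and bypasses the global Brauer-group mechanism that is the point of the section.
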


\begin{rem}
	Proposition \ref{prop:HilbertSymbolForKnot} implies that if there are infinitely many primes $\mathfrak{l}$ as $d$ varies of different residue characteristics at which
	\[
	\HilbertSymbol{2\cos(2\pi/d)-2}{-r_d}{k_d},
	\]
	is ramified, then the (invariant) quaternion algebras for the $(d,0)$ surgeries have infinitely many different ramified residue characteristics as $d$ varies.
\end{rem}

\begin{proof}[Proof of Proposition \ref{prop:HilbertSymbolForKnot}]
	In general the Hilbert symbol at a representation $\rho$ is given (see Proposition \ref{CRSFunctionFieldHilbertSymbol}) by
	\[
	\HilbertSymbol{\chi_{\rho}(a)^2-4}{\chi_{\rho}([a,b])-2}{k_{\rho}}.
	\]
	Using the coordinates $Z$ and $R$ defined above, this looks like
	\begin{equation} \label{eq:breakingHilbertSymbol}
		\HilbertSymbol{Z^2-4}{2Z^2+R^2-Z^2R-4}{k(C)} = \HilbertSymbol{Z^2-4}{R-2}{k(C)}\HilbertSymbol{Z^2-4}{R+2-Z^2}{k(C)}.
	\end{equation}
	For notational convenience, call the leftmost symbol in Equation \ref{eq:breakingHilbertSymbol} $A$, the first symbol to the right of the equals sign $B$, and the rightmost symbol $C$. We claim that $C$ extends to an Azumaya algebra. Indeed, its two entries vanish only at the points determined by $R=2$, $Z=\pm 2$, and $Z^2=R+2$. For $R=2$, note that $C$ has trivial residue there. In fact specializing to $R=2$ makes the second entry $4-Z^2$, so that the algebra is split (see \cite[Corollary 2.3.3]{MR}) at all specializations outside of $Z=\pm 2$, which we treat later. Then, we find that the $B$ retains the nontrivial residue at $R=2$. Moreover, $C$ \textit{a priori} might have a nontrivial residue at $Z^2=R+2$, but neither $A$ nor $B$ does, so neither does the rightmost symbol. Finally, $B$ and $C$ have nontrivial residue at $Z=\pm 2$. However, when $Z=\pm 2$, $R-2$ is a global square in the residue field. Indeed, since $a$ and $b$ are conjugate in the fundamental group, there is a conjugation
	\[
	\begin{pmatrix}
		\alpha & \beta \\
		\gamma & \delta 
	\end{pmatrix}
	\begin{pmatrix}
		1 & 1 \\
		0 & 1
	\end{pmatrix}
	\begin{pmatrix}
		\delta & -\beta \\
		-\gamma & \alpha
	\end{pmatrix} =
	\begin{pmatrix}
		1 & 0 \\
		r & 1
	\end{pmatrix}.
	\]
	Multiplying everything out on the left-hand side yields an $\alpha^2$ in the upper right entry, so $\alpha=0$, which allows us to obtain
	\[
	\begin{pmatrix}
		0 & \beta \\
		\gamma & \delta
	\end{pmatrix}
	\begin{pmatrix}
		1 & 1 \\
		0 & 1
	\end{pmatrix}
	\begin{pmatrix}
		\delta & -\beta \\
		-\gamma & 0
	\end{pmatrix} = 
	\begin{pmatrix}
		-\beta\gamma & 0 \\
		-\gamma^2 & -\beta\gamma
	\end{pmatrix} =
	\begin{pmatrix}
		1 & 0 \\
		-\gamma^2 & 1
	\end{pmatrix}.
	\]
	Note further than $\gamma$ is in the trace field of any representation with $Z=2$. Indeed, writing $\rho(c)$ for the element effecting the above conjugation (so $\rho(cac^{-1}) = \rho(b)$), $\gamma+\delta$ and $\delta-\gamma$ are the traces of $\rho(ca)$ and $\rho(ac^{-1})$, respectively. So $r = -\gamma^2$ implying $R-2 = \gamma^2$, which is a global square in the trace field of the representation. 

This implies that the tame symbol (see Subsection \ref{subsec:azumaya}) and hence the residue is trivial. A similar argument handles $Z=-2$. Since $A$ has trivial residue at $Z=\pm 2$ and $B$ does too, then $C$ must have trivial residue there as well. Then $C$ extends to an Azumaya algebra, so by Theorem \ref{thm:resultatClassique}, there is a finite set $S$ of rational primes such that if $l \notin S$, then no specialization of the second symbol is ramified at a prime above $l$. That is for $l \notin S$ and $\mathfrak{l}$ above $l$, the ramification of $A$ and $B$ agree. \par
	Specializing to $(d,0)$ surgery sends $Z$ in as $2\cos(2\pi/d)$, so $B$ specializes to
	\[
	\HilbertSymbol{4\cos^2(2\pi/d)-4}{R_d-2}{k_{d}}=\HilbertSymbol{2\cos(2\pi/d)-2}{R_d-2}{k_{d}},
	\]
	where $R_d = 2 - r_d$. The above equality follows from the fact that when $d$ is odd, $2\cos(2\pi/d)+2$ is a global square in $\Q(\zeta_d)^{+}$, which is a subfield of $k_d$. One of its square roots is $\zeta_d^{\frac{d+1}{2}}+\zeta_d^{\frac{-(d+1)}{2}}$. Changing $R_d-2$ to $-r_d$ completes the proof.
	
\end{proof}

\section{Proof of Theorem \ref{thm:main}} \label{sec:sketch}

The basic strategy for proving Theorem \ref{thm:main} is as follows. Proposition \ref{prop:HilbertSymbolForKnot} gives an explicit description for the Hilbert symbol of $(d,0)$ surgeries on the knots. In Lemma \ref{lem:HilbertSymbolTotallyReal} we use that Hilbert symbol to determine places that the associated quaternion algebra is ramified at in terms of splitting conditions on the primes diving $r_d$. Then, in Lemmas \ref{lem:relativeNorm} and \ref{lem:absoluteNorm}, we give conditions for a prime to divide $r_d$ and for it to satisfy the appropriate splitting conditions coming from Lemma \ref{lem:HilbertSymbolTotallyReal}, respectively. Finally Lemmas \ref{lem:totallyRealTotallySplit} and \ref{lem:divisors} show how to find infinitely many such primes. The remainder of the section states these intermediate steps and explains how they add up to a proof of Theorem \ref{thm:main}. \par

Theorem \ref{thm:main} then will follow once we can prove that there are infinitely many rational primes that are residue characteristics of ramified primes of the quaternion algebra determined $\HilbertSymbol{2\cos(2\pi/d)-2}{-r_d}{k_d}$ for some $d$. The next lemma describes the ramification of this quaternion algebra in terms of the splitting of primes between $\Q(\zeta_d)^{+} = \Q(\zeta_d+\zeta_d^{-1})$ and $k_d$.

\begin{lem} \label{lem:HilbertSymbolTotallyReal}
	Let $r$ be an algebraic integer inside some fixed finite extension $k_n/\Q(\zeta_d)^{+}$ for $n$ odd. Suppose that $\mathfrak{L}$ is a prime of $k_d$ that does not lie above $2$ or $d$, divides $-r_d$ an odd number of times, and has odd inertia degree over $\Q(\zeta_d)^{+}$. Suppose further than $\mathfrak{l} = \mathfrak{L} \cap \Q(\zeta_d)^{+}$ does not split in $\Q(\zeta_d)$. Then
	\[
	\HilbertSymbol{2\cos(2\pi/d)-2}{-r_d}{k_d},
	\]
	is ramified at $\mathfrak{L}$.
\end{lem}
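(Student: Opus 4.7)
My plan is to reduce the lemma to a direct application of Theorem \ref{thm:MRlocalSymbol} at the completion $k_{d,\mathfrak{L}}$, then verify that the resulting squareness criterion fails. Writing $\alpha = 2\cos(2\pi/d) - 2$ throughout, the first task is to check that $\alpha$ is a unit at $\mathfrak{L}$. Using the identity $\alpha = -(\zeta_d - 1)(\zeta_d^{-1} - 1)$ and the fact that $N_{\Q(\zeta_d)/\Q}(\zeta_d - 1) = \Phi_d(1)$ is supported only on rational primes dividing $d$, a relative norm calculation shows that $N_{\Q(\zeta_d)^+/\Q}(\alpha)$ is supported only at primes dividing $d$; since $\mathfrak{l} = \mathfrak{L} \cap \Q(\zeta_d)^+$ lies over neither $2$ nor $d$, this gives $v_\mathfrak{L}(\alpha) = 0$. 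For the other entry, the hypothesis of odd valuation at $\mathfrak{L}$ lets me write $-r_d = \pi^{2k+1} u$ with uniformizer $\pi$ and unit $u$; since $\HilbertSymbol{\alpha}{s^2}{k_{d,\mathfrak{L}}}$ is trivial in the Brauer group, the local symbol has the same class as $\HilbertSymbol{\alpha}{\pi u}{k_{d,\mathfrak{L}}}$, which satisfies the hypotheses of Theorem \ref{thm:MRlocalSymbol} (and $\mathfrak{L}$ is nondyadic). The theorem then reduces ramification to showing that $\alpha$ is not a square in the residue field $\mathcal{O}_{k_d}/\mathfrak{L}$.

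The second step is to show $\alpha$ is a non-square already modulo $\mathfrak{l}$. Here I repeat the computation used in Proposition \ref{prop:HilbertSymbolForKnot}: $\zeta_d$ satisfies $x^2 - (\zeta_d + \zeta_d^{-1})x + 1 = 0$ over $\Q(\zeta_d)^+$, so by the quadratic formula $\Q(\zeta_d) = \Q(\zeta_d)^+\!\left(\sqrt{(2\cos(2\pi/d))^2 - 4}\right)$. Factor $(2\cos(2\pi/d))^2 - 4 = \alpha \cdot (2\cos(2\pi/d) + 2)$, and since $d$ is odd we have $2\cos(2\pi/d) + 2 = (\zeta_d^{(d+1)/2} + \zeta_d^{-(d+1)/2})^2$, a global square in $\Q(\zeta_d)^+$. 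This rewrites the quadratic extension as $\Q(\zeta_d) = \Q(\zeta_d)^+(\sqrt{\alpha})$. Because $\Q(\zeta_d)/\Q$ ramifies only at primes dividing $d$, the relative extension $\Q(\zeta_d)/\Q(\zeta_d)^+$ is unramified at $\mathfrak{l}$, so the hypothesis that $\mathfrak{l}$ does not split upgrades to the statement that $\mathfrak{l}$ is inert. In a quadratic Kummer extension this is exactly the statement that $\alpha$ is a non-square modulo $\mathfrak{l}$.

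Finally I would propagate non-squareness up the odd-degree residue extension $(\mathcal{O}_{k_d}/\mathfrak{L})/(\mathcal{O}_{\Q(\zeta_d)^+}/\mathfrak{l})$ given by the odd-inertia-degree hypothesis. Writing $q$ for the (odd) size of $\mathcal{O}_{\Q(\zeta_d)^+}/\mathfrak{l}$ and $n$ for the odd inertia degree, Euler's criterion gives
\[
\alpha^{(q^n - 1)/2} = \left(\alpha^{(q-1)/2}\right)^{1 + q + \dots + q^{n-1}},
\]
and the exponent is a sum of $n$ odd terms with $n$ odd, hence odd, so $\alpha^{(q-1)/2} = -1$ forces $\alpha^{(q^n-1)/2} = -1$. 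Thus $\alpha$ remains a non-square modulo $\mathfrak{L}$, and combining with the first paragraph the Hilbert symbol is ramified at $\mathfrak{L}$.

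The real content is the Kummer identification $\Q(\zeta_d) = \Q(\zeta_d)^+(\sqrt{\alpha})$, which translates the splitting hypothesis about $\mathfrak{l}$ in $\Q(\zeta_d)$ into a quadratic residue statement about $\alpha$. The remaining potential obstacle is simply the bookkeeping of ramification indices and residue characteristics across the tower $\Q(\zeta_d)^+ \subset \Q(\zeta_d)$ versus $\Q(\zeta_d)^+ \subset k_d$, which is routine once one has excluded primes above $2$ and $d$.
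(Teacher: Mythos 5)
Your proof is correct and follows essentially the same route as the paper's: reduce via Theorem \ref{thm:MRlocalSymbol} to the statement that $2\cos(2\pi/d)-2$ is a nonsquare modulo $\mathfrak{L}$, deduce this at the level of $\mathfrak{l}$ from the identification $\Q(\zeta_d)=\Q(\zeta_d)^+\bigl(\sqrt{2\cos(2\pi/d)-2}\bigr)$ (packaged in the paper as Lemma \ref{lem:localSquare}), and propagate nonsquareness up the odd-degree residue extension. Your writeup is somewhat more careful than the paper's---it verifies that $2\cos(2\pi/d)-2$ is a unit at $\mathfrak{L}$ and explains why one may replace $-r_d$ by a uniformizer to satisfy the hypotheses of Theorem \ref{thm:MRlocalSymbol}---but the mathematical content is the same.
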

To prove that the quaternion algebra at the $(d,0)$ surgery is ramified at a prime $\mathfrak{L}$ of the trace field that lies above neither $2$ nor $d$, it suffices to show:
\begin{enumerate}[label={(\arabic*)},ref={(\arabic*)}]
	\item $\mathfrak{L}$ divides $-r_d$ and odd number of times, \label{stepone}
	\item $\mathfrak{L}$ has odd inertia degree over $\Q(\zeta_d)^{+}$, and \label{steptwo}
	\item $\mathfrak{l} = \mathfrak{L} \cap \Q(\zeta_d)^{+}$ does not split in $\Q(\zeta_d)$. \label{stepthree}
\end{enumerate}
Theorem \ref{thm:main} will follow if we can arrange these conditions for primes above infinitely many distinct rational primes as we vary $d$.

To handle conditions \ref{stepone} and \ref{steptwo}, we exploit a connection to the Alexander polynomial of the knot. To find primes dividing $r_d$, we compute its field norm, $N_{k_d/\Q}(r_d)$. The (absolute) field norm will be a rational integer whose prime divisors correspond to prime ideals dividing $r_d$. To compute this norm, we first find the relative field norm, $N_{k_d/\Q(\zeta_d)^+}(r_d)$, which can be expressed in terms of the Alexander polynomial of the knot. Recall from Section \ref{sec:cyclo} that $N_{k_d/\Q}(r_d) = N_{\Q(\zeta_d)^+/\Q}\left(N_{k_d/\Q(\zeta_d)^+}(r_d)\right)$ and that the Alexander polynomial of the knot $K_t$ is $\Delta_{K_t}(x)=\left(\frac{t+1}{2}\right)x^2-tx+\left(\frac{t+1}{2}\right)$.
\begin{lem} \label{lem:relativeNorm}
	Let $d$ be odd, $k_d$ the trace field of the $(d,0)$ surgery of a hyperbolic twist knot $K_t$. Then the norm of $r_d$ in the relative extension $k_d/\Q(\zeta_d)^{+}$ is $\Delta_K(\zeta_d^2)=\left(\frac{t+1}{2}\right)\zeta_d^4-t\zeta_d^2+\left(\frac{t+1}{2}\right)$ for all but finitely many $d$.
\end{lem}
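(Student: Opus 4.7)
The plan is to identify the minimal polynomial of $R_d$ over $\Q(\zeta_d)^{+}$, evaluate it at $R=2$ to express the relative norm of $r_d = 2 - R_d$ in closed form, and then invoke Lemma \ref{lem:constantTerm} to recognize the result as (a unit multiple of) $\Delta_{K_t}(\zeta_d^2)$.

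First I would show that for all but finitely many $d$ the specialization
\[
p(R) := f_t\bigl(R,\,2\cos(2\pi/d)\bigr)
\]
is irreducible over $\Q(\zeta_d)^{+}$. By Proposition \ref{prop:f_tFormula} this polynomial is monic of degree $t$ in $R$, and the bivariate $f_t(R,Z) \in \Z[R,Z]$ is irreducible since it cuts out the canonical (hence irreducible) component of the character variety. A Hilbert-irreducibility-style specialization argument, combined with the exclusion of the finitely many $d$ at which the $R$-discriminant of $f_t$ vanishes at $Z=2\cos(2\pi/d)$ or at which the Galois group over $\Q(\zeta_d)^{+}$ drops, yields irreducibility for cofinitely many $d$. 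For such $d$, $p(R)$ is the minimal polynomial of $R_d$, the conjugates of $r_d = 2-R_d$ over $\Q(\zeta_d)^{+}$ are exactly the values $2-\rho$ as $\rho$ ranges over the $t$ roots of $p$, and therefore
\[
N_{k_d/\Q(\zeta_d)^{+}}(r_d) \;=\; \prod_{\sigma}\bigl(2 - \sigma(R_d)\bigr) \;=\; p(2) \;=\; f_t\bigl(2,\,2\cos(2\pi/d)\bigr).
\]

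To finish, I would set $x = \zeta_d$ in Lemma \ref{lem:constantTerm}, so that $x + x^{-1} = 2\cos(2\pi/d)$ and
\[
f_t\bigl(2,\,2\cos(2\pi/d)\bigr) \;=\; -\Delta_{K_t}(\zeta_d^2)/\zeta_d^2 \;=\; -\zeta_d^{-2}\,\Delta_{K_t}(\zeta_d^2).
\]
This differs from $\Delta_{K_t}(\zeta_d^2)$ only by the unit $-\zeta_d^{-2} \in \Z[\zeta_d]^{\times}$, which is harmless for the uses of this lemma in the subsequent prime-divisor analysis (in Lemma \ref{lem:absoluteNorm} and beyond), where only the ideal generated by the norm is needed.

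The principal obstacle will be the irreducibility step. Classical Hilbert irreducibility concerns specialization at rational points of a fixed base field, whereas here the specialization values $2\cos(2\pi/d)$ lie in totally real cyclotomic subfields of varying degree over $\Q$. One likely needs either an effective Hilbert-irreducibility argument applied to an appropriate resolvent polynomial in $Z$, or a direct discriminant/Newton-polygon analysis leveraging the explicit leading and constant coefficients of $f_t$ supplied by Proposition \ref{prop:f_tFormula}; either route must be uniform enough to guarantee that only finitely many exceptional $d$ arise.
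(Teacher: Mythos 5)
Your overall structure matches the paper's proof exactly: show the specialization $f_t(R,2\cos(2\pi/d))$ is irreducible over $\Q(\zeta_d)^+$ for all but finitely many $d$ (so that it is, up to the sign of its leading coefficient, the minimal polynomial of $R_d$), then compute the relative norm of $r_d = 2 - R_d$ by evaluating at $R=2$, and finally invoke Lemma \ref{lem:constantTerm}. Your attention to the discrepancy with the stated formula --- the norm comes out to $-\zeta_d^{-2}\Delta_{K_t}(\zeta_d^2)$, a unit multiple of $\Delta_{K_t}(\zeta_d^2)$ --- is actually more careful than the paper's own prose, and your observation that this is harmless downstream (only the prime ideal divisors matter) is exactly right.

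The genuine gap is the one you flagged as the ``principal obstacle'': the irreducibility of $f_t(R, 2\cos(2\pi/d))$ over $\Q(\zeta_d)^+$ for cofinitely many $d$. You correctly diagnose why ordinary Hilbert irreducibility does not apply (the specialization points $2\cos(2\pi/d)$ live in base fields of unbounded degree), but the two routes you suggest do not by themselves close the gap. The tool the paper actually uses is a theorem of Dvornicich--Zannier (Theorem \ref{thm:DZ}) on specializations at roots of unity: if $g(R,Z^m)$ is irreducible over $\Q^{\mathrm{ab}}$ for all $m \le \deg_R g$, then $g(R,\zeta)$ is irreducible over $\Q^{\mathrm{ab}}$ for all but finitely many roots of unity $\zeta$. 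The paper applies this to $g_t(R,Z) = Z^2 f_t(R, Z+Z^{-1})$, whose specialization at $Z=\zeta_n$ is a unit times $f_t(R, 2\cos(2\pi/n))$. Verifying the hypothesis of Dvornicich--Zannier is where your Newton-polygon instinct enters: absolute irreducibility of $g_t(R,Z^m)$ is established via the Newton-polygon criterion of Bertone--Ch\`eze--Galligo (Theorem \ref{thm:BCG}) together with Capelli's theorem and an irreducibility result of Hoste--Shanahan at the boundary point $Z=1$. So the Newton-polygon analysis is a genuine ingredient, but only as input to Dvornicich--Zannier, which is the essential bridge from ``irreducible for all powers $Z^m$'' to ``irreducible at almost all roots of unity.'' Without that theorem your argument does not go through.
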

The proof of the above lemma involves being able to prove that specializing the character variety at $Z=2\cos(2\pi/d)$ produces an irreducible polynomial over the field $\Q(\zeta_d)^{+}$. Calculation of the character variety can be found above in Section \ref{sec:charVars}, and irreducibility of the relevant specializations is treated in Section \ref{sec:irreducibility}. This irreducibility also somewhat justifies the notation $r_d$ as it represents an algebraic number that is well defined up to Galois conjugation. We are still left to find rational primes dividing the absolute norm, $N_{k_d/\Q}(r_d)$. This will be done in Lemma \ref{lem:divisors} once we may state precisely which rational primes ensure the desired ramification. \par 

To handle condition \ref{steptwo} about the inertia degree, we use the following lemma.

\begin{lem} \label{lem:absoluteNorm}
	Suppose that $d$ is odd and that $l$ is an odd rational prime coprime to $d$ dividing $N_{\Q(\zeta_d)^+/\Q}(\Delta_{K_t}(\zeta_d^2))$ an odd number of times. Then there is a prime $\mathfrak{L}$ of $k_d$ above $l$ such that $\mathfrak{L}$ divides $-r_d$ and odd number of times and $\mathfrak{L}$ has odd inertia degree over $\Q(\zeta_d)^+$.

\end{lem}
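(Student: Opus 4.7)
The plan is to combine Lemma \ref{lem:relativeNorm} with the standard multiplicativity of norms over primes to reduce everything to a parity count. By Lemma \ref{lem:relativeNorm} (applied for $d$ large enough to avoid the finite excluded set), we have $N_{k_d/\Q(\zeta_d)^+}(r_d) = \Delta_{K_t}(\zeta_d^2)$, and combining with the tower formula $N_{k_d/\Q} = N_{\Q(\zeta_d)^+/\Q} \circ N_{k_d/\Q(\zeta_d)^+}$ yields
\[
N_{k_d/\Q}(r_d) = N_{\Q(\zeta_d)^+/\Q}\bigl(\Delta_{K_t}(\zeta_d^2)\bigr).
\]
Thus the hypothesis that $l$ divides the right-hand side an odd number of times says exactly that $v_l\bigl(N_{k_d/\Q}(r_d)\bigr)$ is odd.

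Next I would expand this $l$-adic valuation using the standard formula for the norm of an element in a tower. For any nonzero $x$ in a number field $L/K$ and any prime $\mathfrak{p}$ of $K$, one has
\[
v_{\mathfrak{p}}\bigl(N_{L/K}(x)\bigr) = \sum_{\mathfrak{P} \mid \mathfrak{p}} f(\mathfrak{P}/\mathfrak{p})\, v_{\mathfrak{P}}(x).
\]
Applied in two stages along $\Q \subseteq \Q(\zeta_d)^+ \subseteq k_d$, this gives
\[
v_l\bigl(N_{k_d/\Q}(r_d)\bigr) = \sum_{\mathfrak{L} \mid l} f(\mathfrak{L}/\mathfrak{l})\,f(\mathfrak{l}/l)\,v_{\mathfrak{L}}(r_d),
\]
where the sum runs over primes $\mathfrak{L}$ of $k_d$ dividing $l$ and $\mathfrak{l} = \mathfrak{L} \cap \Q(\zeta_d)^+$.

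Since the total is odd by assumption, at least one summand must be odd, forcing all three of the factors $f(\mathfrak{L}/\mathfrak{l})$, $f(\mathfrak{l}/l)$, and $v_{\mathfrak{L}}(r_d)$ to be odd for that particular $\mathfrak{L}$. In particular, $\mathfrak{L}$ has odd inertia degree over $\Q(\zeta_d)^+$ and divides $r_d$ (hence $-r_d$, which differs by a unit) an odd number of times, which is exactly what the lemma claims. The hypotheses that $l$ is odd and coprime to $d$ ensure that $\mathfrak{L}$ lies above neither $2$ nor $d$, in line with the setup of Lemma \ref{lem:HilbertSymbolTotallyReal}.

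There is no real obstacle here beyond being careful with the setup: one must confirm that Lemma \ref{lem:relativeNorm}'s finite excluded set of $d$ is harmless (it is, since the theorem only needs infinitely many $d$), and that units do not affect the parity statements (they do not, since we are comparing valuations of principal ideals). The entire argument is a bookkeeping exercise in the multiplicativity of norms once the relative norm identity of Lemma \ref{lem:relativeNorm} is in hand.
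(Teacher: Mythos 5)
Your proof is correct and takes essentially the same approach as the paper: both express $N_{k_d/\Q}(r_d)$ as $N_{\Q(\zeta_d)^+/\Q}(\Delta_{K_t}(\zeta_d^2))$ via the tower law and Lemma \ref{lem:relativeNorm}, then argue by parity of the $l$-adic valuation of the norm. You simply make explicit the valuation identity $v_l\bigl(N_{L/\Q}(x)\bigr) = \sum_{\mathfrak{L}\mid l} f(\mathfrak{L}/l)\,v_{\mathfrak{L}}(x)$ that the paper leaves implicit, and you correctly note (where the paper is silent) that the finite excluded set of $d$ from Lemma \ref{lem:relativeNorm} does no harm.
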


The next lemma applies Lemma \ref{lem:absoluteNorm} to cast condition \ref{stepthree} also in terms of the Alexander polynomial. 

\begin{lem} \label{lem:totallyRealTotallySplit}
	Suppose that $d$ is odd and that $l$ is a rational prime below a prime dividing $\Delta_{K_t}(\zeta_d^2)$. Suppose further that $l$ and $d$ are coprime to $t$ and $\frac{t+1}{2}$. Then $l$ is totally split in $\Q(\zeta_d)^+$. Hence, if $l \not\equiv 1\Mod{d}$, then all primes of $\Q(\zeta_d)^+$ above $l$ are inert in $\Q(\zeta_d)$.
\end{lem}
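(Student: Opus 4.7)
The plan is to observe that a rational prime $l$ coprime to $d$ is totally split in $\Q(\zeta_d)^+$ if and only if $l \equiv \pm 1 \pmod{d}$; this is because $\Gal(\Q(\zeta_d)^+/\Q) \cong (\Z/d\Z)^*/\{\pm 1\}$ with Frobenius at $l$ given by $\zeta_d \mapsto \zeta_d^l$, so the entire lemma reduces to extracting this congruence from the divisibility hypothesis. Once $l \equiv \pm 1 \pmod{d}$ is established, the ``hence'' assertion is immediate: if $l \not\equiv 1 \pmod{d}$ and $d > 2$, then $l \equiv -1 \pmod{d}$ has order exactly $2$ in $(\Z/d\Z)^*$, so by Theorem \ref{thm:cyclosplitting} every prime of $\Q(\zeta_d)$ above $l$ has inertia degree $2$ over $\Q$; combined with total splitting in $\Q(\zeta_d)^+$ and multiplicativity of inertia degrees in the tower $\Q \subset \Q(\zeta_d)^+ \subset \Q(\zeta_d)$, each prime of $\Q(\zeta_d)^+$ above $l$ is forced to be inert in $\Q(\zeta_d)$.

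To produce the congruence, I would let $\mathfrak{l}$ be a prime of $\Q(\zeta_d)$ above $l$ dividing $\Delta_{K_t}(\zeta_d^2) = \tfrac{t+1}{2}\zeta_d^4 - t\zeta_d^2 + \tfrac{t+1}{2}$. The coprimality hypotheses make $\zeta_d$, $t$, and $(t+1)/2$ all units modulo $\mathfrak{l}$, so dividing the congruence $\Delta_{K_t}(\zeta_d^2) \equiv 0 \pmod{\mathfrak{l}}$ through by $\tfrac{t+1}{2}\zeta_d^2$ rewrites it as
\[
\zeta_d^2 + \zeta_d^{-2} \equiv \frac{2t}{t+1} \pmod{\mathfrak{l}}.
\]
The right-hand side lies in $\mathbf{F}_l \subset \mathcal{O}_{\Q(\zeta_d)}/\mathfrak{l}$ and is therefore fixed by the Frobenius at $\mathfrak{l}$, which acts on the residue field as the $l$-th power map.

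Applying Frobenius yields $\zeta_d^{2l} + \zeta_d^{-2l} \equiv \zeta_d^2 + \zeta_d^{-2} \pmod{\mathfrak{l}}$, so $\zeta_d^{2l}$ is a root of the quadratic $Y^2 - (\zeta_d^2+\zeta_d^{-2})Y + 1$, whose two roots are $\zeta_d^{\pm 2}$. Hence $\zeta_d^{2l} \equiv \zeta_d^{\pm 2} \pmod{\mathfrak{l}}$, i.e., $\zeta_d^{2(l \mp 1)} \equiv 1 \pmod{\mathfrak{l}}$. Since $\gcd(l,d)=1$ the group $\mu_d$ injects into the residue field, so $d \mid 2(l \mp 1)$; the hypothesis that $d$ is odd upgrades this to $d \mid (l \mp 1)$, i.e., $l \equiv \pm 1 \pmod{d}$, as required. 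I do not anticipate a substantive technical obstacle: the whole argument hinges on one Frobenius computation, and the coprimality and parity hypotheses exist precisely to make the algebraic manipulations above legitimate.
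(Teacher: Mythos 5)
Your argument is correct, and the crux is the same as the paper's: use the divisibility hypothesis, together with the coprimality assumptions, to conclude that $\zeta_d + \zeta_d^{-1}$ (equivalently $\zeta_d^2 + \zeta_d^{-2}$, since $d$ is odd) reduces modulo $\mathfrak{l}$ to the element $2t/(t+1)$ of the prime subfield $\mathbf{F}_l$. Where you diverge is in how you convert that observation into total splitting. The paper's proof works entirely inside $\Q(\zeta_d)^+$ and invokes the fact (cited from Washington) that $\Z[\zeta_d + \zeta_d^{-1}]$ is the \emph{full} ring of integers of $\Q(\zeta_d)^+$, so the residue field $\Z[\zeta_d+\zeta_d^{-1}]/\mathfrak{l}$ is generated over $\mathbf{F}_l$ by the image of $\zeta_d+\zeta_d^{-1}$ and is therefore $\mathbf{F}_l$ itself --- a one-line conclusion, but one that leans on the precise determination of $\mathcal{O}_{\Q(\zeta_d)^+}$. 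You instead lift to $\Q(\zeta_d)$ and run a Frobenius argument: since Frobenius at $\mathfrak{l}$ fixes $2t/(t+1)$, it permutes the two roots of $Y^2 - (\zeta_d^2+\zeta_d^{-2})Y + 1$, giving $\zeta_d^{2l} \equiv \zeta_d^{\pm 2} \pmod{\mathfrak{l}}$, and injectivity of $\mu_d$ into the residue field (valid as $\gcd(l,d)=1$) plus $d$ odd then yields $l \equiv \pm 1 \pmod d$. This costs you a few more lines but is arguably more robust, since it never needs to identify the ring of integers of the real subfield; it reads the congruence off directly from the Galois action. Both handle the ``hence'' part the same way, using Theorem~\ref{thm:cyclosplitting} and multiplicativity of residue degrees in the tower $\Q \subset \Q(\zeta_d)^+ \subset \Q(\zeta_d)$. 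One small point worth making explicit, which both you and the paper leave implicit: inverting $t+1$ rather than merely $\tfrac{t+1}{2}$ uses that $l$ is odd, which holds in the paper's applications (the Hilbert-symbol criterion of Theorem~\ref{thm:MRlocalSymbol} is nondyadic) but is not literally among the stated hypotheses of the lemma.
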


We work with $d$ of the form $d=p^uq^v$ for $p,q$ as in the statement of Theorem \ref{thm:main} and $u,v$ integers. Our work so far says that if a rational prime $l$ divides $N_{\Q(\zeta_d)^+/\Q}(\Delta_{K_t}(\zeta_d^2))$ an odd number of times and is not equivalent to $1 \Mod{pq}$, then there is some prime $\mathfrak{L}$ of $k_d$ above $l$ at which the quaternion algebra for the $(d,0)$ surgery is ramified. We have not yet proved that any such $l$ exist. The next lemma shows that we may find infinitely many. Its proof combines an analysis of the resultant of the Alexander polynomial with the cyclotomic polynomials and a dynamical result of Furstenberg appearing in \cite{FurstenbergNonlacunary}.

\begin{lem} \label{lem:divisors}
	Let $t \in \Z_{\geq 0}$ be odd, $\Delta(x) = \left(\frac{t+1}{2}\right)x^2-tx+\left(\frac{t+1}{2}\right) \in \Z[x]$. Let $p,q$ be distinct, rational, odd primes and suppose that $\Delta(x) \equiv 1 \Mod{pq}$. Then there are infinitely many positive integers $d$ for which $N_{\Q(\zeta_d)^+/\Q}(\Delta(\zeta_d^2))$ is divisible by a rational prime $l$ an odd number of times and $l \not\equiv 1 \mod{pq}$.
\end{lem}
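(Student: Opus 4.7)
My plan is to analyze the rational prime factors of $N_{\Q(\zeta_d)^+/\Q}(\Delta(\zeta_d^2))$ by expressing it via a resultant involving the cyclotomic polynomial $\Phi_d$ and the roots of $\Delta$. Writing $\Delta(x)=a(x-\alpha)(x-\alpha^{-1})$ with $a=(t+1)/2$ and $\alpha,\alpha^{-1}\in\Q(\sqrt{-(2t+1)})$, and using that for $d$ odd the map $k\mapsto 2k$ permutes $(\Z/d)^\times$, the set $\{\zeta_d^{2k}\}$ coincides with the set of primitive $d$-th roots of unity. After dividing out the unit $\zeta_d^2$ to land in $\Q(\zeta_d)^+$ (the adjustment already implicit in Lemma \ref{lem:relativeNorm}), the norm becomes a product over primitive $d$-th roots of unity of $\Delta(\omega)$, which using the palindromic identity $\Phi_d(1/x)=x^{-\varphi(d)}\Phi_d(x)$ I will rewrite as $\pm a^{\varphi(d)/2}M_d$ for a real algebraic integer $M_d=\alpha^{-\varphi(d)/2}\Phi_d(\alpha)\in\Q$.

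Given the hypothesis (which forces $a\equiv 0\pmod{pq}$ while $t$ remains a unit modulo both $p$ and $q$), a direct reduction of $\prod_\omega\Delta(\omega)$ modulo $p$ yields $(-t)^{\varphi(d)}\Phi_d(0)\not\equiv 0\pmod{p}$, so neither $p$ nor $q$ appears in the factorization. Every remaining rational prime $l$ dividing the norm corresponds to $\alpha$ reducing to an element of exact order $d$ in the residue field of some prime above $l$ in the quadratic extension $\Q(\alpha)$. This produces the dichotomy that split primes $l$ satisfy $l\equiv 1\pmod{d}$ and can contribute odd valuations, while inert primes $l$ satisfy $l\equiv -1\pmod{d}$ and always contribute even valuations. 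For the desired conclusion I therefore focus on split primes coming from $d$ chosen so that the congruence $l\equiv 1\pmod{d}$ does not force $l\equiv 1\pmod{pq}$; the natural choice is to take $d$ coprime to $pq$.

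The hardest step is producing infinitely many such $d$. Here I invoke the dynamical result of Furstenberg from \cite{FurstenbergNonlacunary}: the multiplicative semigroup generated by two distinct primes is non-lacunary, and since $\alpha=e^{i\theta}$ has $\theta/(2\pi)$ irrational (as $\Delta$ has no cyclotomic factors for our parameter range), Furstenberg's theorem shows the orbit $\{d\theta\bmod 2\pi\}$ is dense in $\R/2\pi\Z$ as $d$ varies over a suitable non-lacunary semigroup. Combined with a Zsigmondy-type existence statement for primitive prime divisors of $\Phi_d(\alpha)$, this density prevents every prime appearing in the factorization of $\Phi_d(\alpha)$ from lying in the single residue class $1\pmod{pq}$ for all but finitely many $d$ in the semigroup. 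I expect the main obstacle to be the parity bookkeeping: because $\mathrm{Res}(\Phi_d,\Delta)$ is structurally close to a perfect square, courtesy of the $\alpha\leftrightarrow\alpha^{-1}$ involution, isolating an odd-multiplicity prime requires working with the $+$-norm rather than the full Galois norm and carefully tracking how split versus inert primes contribute, so that the odd valuation is produced unambiguously by a split prime not congruent to $1\pmod{pq}$.
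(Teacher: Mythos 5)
The proposal correctly identifies Furstenberg's density theorem as the engine, but the mechanism connecting density to the desired conclusion is missing, and several intermediate assertions are not justified. The sticking point is the final sentence of your second paragraph and the whole of your third: you claim that density of $\{d\theta \bmod 2\pi\}$ "prevents every prime appearing in the factorization of $\Phi_d(\alpha)$ from lying in the single residue class $1 \pmod{pq}$." There is no argument offered for this. Density of $d\theta$ tells you something about the \emph{size} of $\abs{M_d}$ (equivalently, about the sign of $\Im(w^n)$), but by itself it says nothing about the residue classes mod $pq$ of the prime divisors of $M_d$. A Zsigmondy-type statement gives a prime of multiplicative order exactly $d$, but again gives no control on that prime's class mod $pq$. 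So the bridge from the analytic input to the arithmetic conclusion is absent.

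The paper's actual argument uses a quite different mechanism, which is worth contrasting. It works with $n = p^u q^v$ and the \emph{signed} product $\prod_{d\mid n} N_{\Q(\zeta_d)^+/\Q}(\Delta(\zeta_d^2)) = i(\overline{y}^n - y^n) = 2(\sqrt{(t+1)/2})^n\,\Im(w^n)$. Lemma \ref{lem:residueClassSplitResultant} pins this integer down to $\equiv 1 \pmod{pq}$ for $n \equiv 1 \pmod 4$, while Furstenberg flips the sign of $\Im(w^n)$ infinitely often. A sign flip forces the \emph{absolute value} of the product to be $\equiv -1 \pmod{pq}$, hence some individual factor has $\abs{N_{\Q(\zeta_d)^+/\Q}(\Delta(\zeta_d^2))} \not\equiv 1 \pmod{pq}$; since by Lemma \ref{lem:multiplicativeOrder} every prime divisor is $\equiv \pm 1 \pmod d$, an odd-multiplicity divisor $l \not\equiv 1 \pmod{pq}$ must appear. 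Note this takes $d \mid p^u q^v$, so $d$ is \emph{not} coprime to $pq$; this is deliberate, because $l \equiv -1 \pmod d$ then automatically forces $l \not\equiv 1 \pmod{pq}$. Your proposal takes $d$ coprime to $pq$ — the opposite choice — which throws away exactly the leverage the paper uses.

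Two more specific problems. First, your asserted dichotomy "split primes $\cdots$ can contribute odd valuations, while inert primes $\cdots$ always contribute even valuations" is not established and does not obviously hold: for a prime $l$ of order $2$ mod $d$, the primes of $\Q(\zeta_d)^+$ above $l$ have inertia degree $1$, and the valuation of $N_{\Q(\zeta_d)^+/\Q}(\Delta(\zeta_d^2))$ at $l$ is a sum of local valuations with no parity constraint. (What \emph{is} a perfect square is the full $\Q(\zeta_d)$-norm, not the $+$-norm — you note this at the end, but it undercuts your earlier dichotomy rather than supporting it.) Second, the explicit congruence computation that replaces your hand-waving — tracking $i(\overline{y}^n - y^n) \pmod{pq}$ via $i(\overline{y}-y) \equiv 1$, $i(\overline{y}^3-y^3) \equiv -1$, and $y^5 \equiv y \pmod{pq}$ — is the genuine content of the paper's proof and has no counterpart in your sketch. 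Without a substitute for it, the argument as proposed does not close.
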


Fixing an integer $t \geq 2$, we apply Lemma \ref{lem:divisors} using the polynomial $\Delta_{K_t}(x) = \left(\frac{t+1}{2}\right)x^2-tx+\left(\frac{t+1}{2}\right)$, which is the Alexander polynomial of $K_t$. Then for each of the $d$ produced by Lemma \ref{lem:divisors}, the quaternion algebra for the $(d,0)$ surgery is ramified at some prime with residue characteristic $l$ as in the statement of Lemma \ref{lem:divisors}. The proof of Theorem \ref{thm:main} will be complete one we show (see Lemma \ref{lem:multiplicativeOrder} for details) that each such $l$ can only occur for finitely many $d$.

\section{Ramification} \label{sec:ramification}
In this section, we prove Lemmas \ref{lem:HilbertSymbolTotallyReal}, \ref{lem:absoluteNorm}, \ref{lem:totallyRealTotallySplit}, and \ref{lem:divisors}.
\begin{lem} \label{lem:localSquare}
	 If $n$ is an odd positive integer and $\mathfrak{l}$ is a prime ideal of $\Q(\zeta_d)^{+}$, then $2\cos(2\pi/d) - 2$ is a nonsquare modulo $\mathfrak{l}$ if and only if $\mathfrak{l}$ is inert in $\Q(\zeta_d)$.
\end{lem}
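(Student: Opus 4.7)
The plan is to realize $\Q(\zeta_d)$ as an explicit quadratic extension of $\Q(\zeta_d)^+$ generated by a square root, and then invoke the standard Kummer--Dedekind splitting criterion. Writing $Z = \zeta_d + \zeta_d^{-1} = 2\cos(2\pi/d)$, the primitive root $\zeta_d$ satisfies $x^2 - Zx + 1 = 0$ over $\Q(\zeta_d)^+$, so $\Q(\zeta_d) = \Q(\zeta_d)^+\bigl(\sqrt{Z^2 - 4}\bigr) = \Q(\zeta_d)^+\bigl(\sqrt{(Z-2)(Z+2)}\bigr)$. The proof of Proposition \ref{prop:HilbertSymbolForKnot} already records that when $d$ is odd, $Z + 2$ is a square in $\Q(\zeta_d)^+$, with square root $\zeta_d^{(d+1)/2} + \zeta_d^{-(d+1)/2}$. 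Dividing the generator of the extension by this square root yields $\Q(\zeta_d) = \Q(\zeta_d)^+\bigl(\sqrt{Z-2}\bigr)$.

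With this presentation in hand, I apply Kummer--Dedekind. For a prime $\mathfrak{l}$ of $\Q(\zeta_d)^+$ of odd residue characteristic with $v_\mathfrak{l}(Z-2) = 0$, the splitting of $\mathfrak{l}$ in $\Q(\zeta_d)$ mirrors the factorization of $x^2 - (Z-2)$ modulo $\mathfrak{l}$: $\mathfrak{l}$ is inert precisely when this polynomial is irreducible, i.e., precisely when $Z-2$ is a nonsquare mod $\mathfrak{l}$; if instead $Z-2$ reduces to a nonzero square, then $\mathfrak{l}$ splits. When $v_\mathfrak{l}(Z-2) > 0$, the prime $\mathfrak{l}$ ramifies; here $Z-2 \equiv 0 \pmod{\mathfrak{l}}$ is trivially a square and $\mathfrak{l}$ is not inert, so the biconditional still holds.

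The step requiring the most care is identifying when $\mathfrak{l}$ can have positive valuation on $Z-2$. Using the factorization $Z - 2 = -(1-\zeta_d)(1-\zeta_d^{-1})$ together with the standard fact that $(1-\zeta_d)$ is a unit in $\Z[\zeta_d]$ unless $d$ is a prime power $p^k$, I argue that for odd $d$ not a prime power, $Z-2$ is a unit in $\Q(\zeta_d)^+$ and the first case of the previous paragraph covers every prime $\mathfrak{l}$. If instead $d = p^k$ with $p$ odd, the only prime $\mathfrak{l}$ of $\Q(\zeta_d)^+$ at which $Z-2$ has positive valuation is the unique prime $\mathfrak{p}$ above $p$, and there $v_\mathfrak{p}(Z-2) = 1$, so $\mathfrak{p}$ is genuinely ramified in $\Q(\zeta_d)$, consistent with the claim. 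This exhausts the possibilities and completes the verification of the biconditional.
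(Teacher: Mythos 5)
Your proof is correct and rests on the same underlying fact as the paper's, namely that $\Q(\zeta_d)/\Q(\zeta_d)^+$ is generated by a square root of $Z-2 = 2\cos(2\pi/d)-2$; the two arguments just package this differently. The paper exhibits an explicit square root $\zeta_d^{(d-1)/2}(\zeta_d-1)\in\Z[\zeta_d]$ of $Z-2$ and then compares residue fields: when $\mathfrak{l}$ is split or ramified, the residue field upstairs equals $\Z[2\cos(2\pi/d)]/\mathfrak{l}$, so the reduction of that square root lands in the residue field, giving a square; when $\mathfrak{l}$ is inert, it does not. You instead use the square-ness of $Z+2$ (already present in the paper's proof of Proposition \ref{prop:HilbertSymbolForKnot}) to write $\Q(\zeta_d)=\Q(\zeta_d)^+(\sqrt{Z-2})$ and then read off the splitting from the factorization of $x^2-(Z-2)$ modulo $\mathfrak{l}$ via Kummer--Dedekind. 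Your route has the virtue of being explicit about the conductor/valuation bookkeeping (you correctly identify that $v_\mathfrak{l}(Z-2)>0$ occurs only at the ramified prime above $p$ when $d=p^k$, and that $v_\mathfrak{p}(Z-2)=1$ there), which the paper passes over; the paper's route avoids invoking Kummer--Dedekind by producing the square root by hand.

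One caveat applies to both proofs, and indeed to the lemma as stated: at a prime $\mathfrak{l}$ of residue characteristic $2$ every element of the residue field is a square, yet such a prime can be inert in $\Q(\zeta_d)/\Q(\zeta_d)^+$ (take $d=5$: the prime above $2$ in $\Q(\sqrt{5})$ is inert in $\Q(\zeta_5)$ and $Z-2$ is a unit there). You build the restriction to odd residue characteristic into the Kummer--Dedekind step but then assert the case split ``exhausts the possibilities,'' which it does not quite; the unaddressed case is harmless in context because Lemma \ref{lem:HilbertSymbolTotallyReal} only applies the present lemma to primes not lying above $2$, but the exhaustiveness claim should be removed or the dyadic case explicitly excluded.
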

\begin{proof}
	Note that $2\cos(2\pi/d) - 2$ is not a global square in $\Q(\zeta_d)^{+}$ as it is negative at the real embeddings. It is, however, a global square in $\Q(\zeta_d)$. To see this, note that when $d$ is odd, $\zeta_d^{(d-1)/2} \in \Q(\zeta_d)$, and
	\[
	\begin{aligned}
	\left(\zeta_d^{(d-1)/2}\left(\zeta_d-1\right)\right)^2 &= \zeta_d^{d+1} - 2 \zeta_d^{(d+1)/2+(d-1)/2} + \zeta_d^{d-1} \\
	&= 2 \cos(2\pi/d) - 2.
	\end{aligned}
	\]
	So if $\mathfrak{l}$ splits (or is ramified) in $\Q(\zeta_d)$, then $\Z[2 \cos(2\pi/d)]/\mathfrak{l}$ coincides with a quotient of $\Z[\zeta_d]$ wherein $2\cos(2\pi/d) - 2$ is a square. On the other hand if $\mathfrak{l}$ is inert, then the finite field containing the square root of the reduction of $2\cos(2\pi/d) - 2$ modulo $\mathfrak{l}$ is a proper extension of $\Z[2 \cos(2\pi/d)]/\mathfrak{l}$. That is, $2\cos(2\pi/d) - 2$ is a nonsquare modulo $\mathfrak{l}$.
\end{proof}
\begin{proof}[Proof of Lemma \ref{lem:HilbertSymbolTotallyReal}]
	This ramification is equivalent to $2\cos(2\pi/d)-2$ being a nonsquare modulo $\mathfrak{L}$ by Theorem \ref{thm:MRlocalSymbol}. By hypothesis, the inertia degree is odd, so the residue field has odd degree over the finite field $\Z[2\cos(2\pi/d)]/\mathfrak{l}$. Moreover, since $\mathfrak{l}$ is inert in $\Q(\zeta_d)$,  $2\cos(2\pi/d)-2$ is a nonsquare modulo $\mathfrak{l}$ by Lemma \ref{lem:localSquare}. Then no odd degree extension of $\Z[2\cos(2\pi/d)]/\mathfrak{l}$ can contain a square root of $2\cos(2\pi/d)-2$, so it is a nonsquare modulo $\mathfrak{L}$.
\end{proof}

\begin{proof}[Proof of Lemma \ref{lem:absoluteNorm}]
	
	Consider the norm absolute norm $N_{k_d/\Q}(-r)$. By Lemma \ref{lem:relativeNorm},
	\[
	\begin{aligned}
	N_{k_d/\Q}(-r) &= N_{\Q(\zeta_d)^+/\Q}\left(N_{k_d/\Q(\zeta_d)^+}(-r)\right) \\
	&= N_{\Q(\zeta_d)^+/\Q}(\Delta_K(\zeta_d^2)).
	\end{aligned}
	\]
	Now suppose that $l$ divides $N_{\Q(\zeta_d)^+/\Q}(\Delta_K(\zeta_d^2))$ an odd number of times. Then there is some prime $\mathfrak{L}$ of $k_d$ of odd inertia degree over $\Q$ and hence over $\Q(\zeta_d)^+$.
\end{proof}
\begin{lem} \label{lem:multiplicativeOrder}
	Let $d$ be odd and $l$ be a rational prime lying below a prime of $\Q(\zeta_d)$ dividing $\Delta_{K_t}(\zeta_d^2)$. Then $l$ has multiplicative order either $1$ or $2$ modulo $d$.
\end{lem}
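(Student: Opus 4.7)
The plan is to use Theorem~\ref{thm:cyclosplitting} to translate the multiplicative order of $l$ modulo $d$ into the inertia degree of a prime above $l$, and then to bound that inertia degree by analyzing the reduction of $\Delta_{K_t}$ modulo $l$. Since the lemma refers to the order of $l$ modulo $d$, I will tacitly assume $\gcd(l,d) = 1$. Fixing a prime $\mathfrak{L}$ of $\Q(\zeta_d)$ above $l$ that divides $\Delta_{K_t}(\zeta_d^2)$, Theorem~\ref{thm:cyclosplitting} identifies its inertia degree $f$ with the multiplicative order of $l$ modulo $d$, so it suffices to show $f \leq 2$.

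First I would describe the residue field $\mathbf{F}_{l^f}$ of $\mathfrak{L}$. Since $\gcd(l,d) = 1$, the image $\overline{\zeta_d}$ is a primitive $d$-th root of unity, and because $d$ is odd, $\overline{\zeta_d^2}$ still has multiplicative order exactly $d$. Hence $\mathbf{F}_l(\overline{\zeta_d^2}) = \mathbf{F}_{l^f}$. By hypothesis $\mathfrak{L}$ divides $\Delta_{K_t}(\zeta_d^2)$, so $\overline{\zeta_d^2}$ is a root of the reduction $\overline{\Delta_{K_t}}(x) = \tfrac{t+1}{2} x^2 - t x + \tfrac{t+1}{2} \in \mathbf{F}_l[x]$.

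The main (small) obstacle is confirming that $\overline{\Delta_{K_t}}$ is genuinely of degree $2$, since this is what forces $\overline{\zeta_d^2} \in \mathbf{F}_{l^2}$ and hence $\mathbf{F}_{l^f} \subseteq \mathbf{F}_{l^2}$, giving $f \in \{1,2\}$. I would rule out the alternative $l \mid \tfrac{t+1}{2}$ as follows. If additionally $l \mid t$, then $l$ divides $2 \cdot \tfrac{t+1}{2} - t = 1$, which is impossible; if instead $l \nmid t$, then $\overline{\Delta_{K_t}}(x) = -tx$ has only the root $0$, contradicting the fact that $\overline{\zeta_d^2}$ is a unit in $\mathbf{F}_{l^f}^{\times}$. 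This rules out the degenerate case and completes the plan.
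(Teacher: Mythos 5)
Your proof is correct and follows essentially the same route as the paper: translate the multiplicative order of $l$ modulo $d$ into the inertia degree of a prime above $l$ via Theorem~\ref{thm:cyclosplitting}, then observe that the reduction of $\zeta_d^2$ satisfies the degree-$2$ polynomial $\overline{\Delta_{K_t}}$ over $\mathbf{F}_l$ and hence generates a residue field of degree at most $2$. The only cosmetic difference is that the paper first Galois-conjugates $\zeta_d^2 \mapsto \zeta_d$ (valid since $d$ is odd) and multiplies the Alexander value by a root of unity before reducing, whereas you work directly with $\overline{\zeta_d^2}$ and observe it is still a primitive $d$-th root of unity; both routes then exclude the degenerate cases $l \mid \frac{t+1}{2}$ and $l \mid t$ by the same elementary divisibility arguments. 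Your explicit remark that one must tacitly assume $\gcd(l,d)=1$ for the statement to be meaningful is a small clarification the paper leaves implicit (though it returns to the point in Remark~\ref{rem:finitelyManyTimes}).
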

\begin{proof}
	Write $\mathfrak{l}$ for a prime of $\Q(\zeta_d)$ above $l$. Note that $\Delta_{K_t}(\zeta_d^2)$ is conjugate to $\Delta_{K_t}(\zeta_d)$ over $\Q(\zeta_d)$ as long as $d$ is odd. After multiplying by roots of unity, we may assume that $\mathfrak{l}$ divides the ideal generated by $\frac{t+1}{2}(\zeta_d+\zeta_d^{-1})-t$. Observe that $l$ cannot divide $\frac{t+1}{2}$ because this implies $t\zeta_d \equiv 0 \mod{\mathfrak{l}}$, but then $l$ divides $t$ and $t+1$. Similarly, $l$ cannot divide $t$ as this implies $\left(\frac{t+1}{2} \right)(\zeta_d+\zeta_d^{-1}) \equiv 0 \mod{\mathfrak{l}}$, but $\zeta_d+\zeta_d^{-1}$ is a unit for $d$ odd, so this implies $t+1 \equiv 0 \mod{\mathfrak{l}}$. \par  
	Recall that the multiplicative order of $l$ modulo $d$ is equal to the inertia degree of $l$ in $\Q(\zeta_d)$ by Theorem \ref{thm:cyclosplitting}, so the result will follow once we show that the degree of $\Z[\zeta_d]/\mathfrak{l}$ over $\mathbf{F}_l$ is $1$ or $2$. Now, $\Delta_{K_t}(\zeta_d^2)$ has $3$ nonzero terms, so $\Delta_{K_t}(\zeta_d^2)$ dying modulo $\mathfrak{l}$ forces a linear dependence modulo $\mathfrak{l}$ of $\{1, \zeta_d, \zeta_d^2\}$, so the finite field obtained by reducing modulo $\mathfrak{l}$ must have degree only $1$ or $2$ above its prime subfield. 
\end{proof}
\begin{rem} \label{rem:finitelyManyTimes}
	It follows from Lemma \ref{lem:multiplicativeOrder} that if we fix $f$ as in the lemma, a given prime $l$ can only have a prime above it in $\Q(\zeta_d)$ dividing $\Delta_{K_t}(\zeta_d^2)$ for finitely many values of $d$ unless $l$ divides $d$ itself infinitely often as $d$ varies. In applying Lemma \ref{lem:divisors}, we will take $d$ equal to $p^uq^v$ for $p,q$ fixed distinct, odd rational primes and vary the powers $u$ and $v$. The primes $l$ produced will be $-1 \Mod{pq}$, so in particular they do not divide $d$. 
\end{rem}

\begin{proof}[Proof of Lemma \ref{lem:totallyRealTotallySplit}]
	By Lemma \ref{lem:multiplicativeOrder}, we know that $l$ has multiplicative order either $1$ or $2$. Write $\mathfrak{l}$ for a prime of $\Q(\zeta_d)^+$ above $l$ that divides $\left(\frac{t+1}{2}\right)(\zeta_d^2+\zeta_d^{-2})-t$. Note that since $d$ is odd, $\left(\frac{t+1}{2}\right)(\zeta_d^2+\zeta_d^{-2})-t$ is Galois conjugate to $\left(\frac{t+1}{2}\right)(\zeta_d+\zeta_d^{-1})-t$. Recall that $\Z[\zeta_d+\zeta_d^{-1}]$ is the ring of integers of $\Q(\zeta_d)^+$, and consider the reduction map $\Z[\zeta_d+\zeta_d^{-1}] \rightarrow \Z[\zeta_d+\zeta_d^{-1}]/\mathfrak{l}$. Then $\left(\frac{t+1}{2}\right)(\zeta_d+\zeta_d^{-1})-t$ is in the kernel of this map. That is, $(\zeta_d+\zeta_d^{-1}) \equiv \frac{2t}{t+1} \Mod{\mathfrak{l}}$, so $(\zeta_d+\zeta_d^{-1})$ lies in the prime subfield of $\Z[\zeta_d+\zeta_d^{-1}]/\mathfrak{l}$, so then $\Z[\zeta_d+\zeta_d^{-1}]/\mathfrak{l}$ is just $\mathbf{F}_l$. \par
	The assertion about inertia follows from recalling (see Theorem \ref{thm:cyclosplitting}) that the totally split primes of $\Q(\zeta_d)/\Q$ are exactly those that are $1 \Mod{d}$.
\end{proof}

Now we must establish some control on the primes dividing $\Delta_{K_t}(\zeta_d^2)$. For notational convenience, we write $f(x)$ in place of $\Delta_{K_t}(x)$ in the next lemma.
\begin{lem} \label{lem:residueClassSplitResultant}
	Let $p,q$ be distinct odd primes, $t$ as in the statement of Theorem \ref{thm:main}, $n$ an odd positive integer, and $f(x) = \left(\frac{t+1}{2}\right)x^2-tx+\left(\frac{t+1}{2}\right)$. Further, let
	\[\begin{aligned}
		w &= \dfrac{1}{\sqrt{2(t+1)}}\left(\sqrt{2t+1}+i\right) \\
		y &= \sqrt{\dfrac{t+1}{2}}w.
	\end{aligned}\]
	Then
	\begin{enumerate}
		\item $w$ is a square root of a root of $f$. \\
		\item $i\left(\overline{y}^n-y^n\right) \in \Z$, and
		\[
		i\left(\overline{y}^n-y^n\right) \equiv
		\begin{cases}
			1\Mod{pq} & n \equiv 1\Mod{4} \\
			-1\Mod{pq} & n \equiv 3\Mod{4}.
		\end{cases}
		\]
	\end{enumerate}
\end{lem}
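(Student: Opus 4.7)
The plan is to treat the two parts separately. Part (1) is a direct algebraic verification: squaring gives
\[
w^2 = \frac{(\sqrt{2t+1}+i)^2}{2(t+1)} = \frac{2t + 2i\sqrt{2t+1}}{2(t+1)} = \frac{t+i\sqrt{2t+1}}{t+1},
\]
which is exactly one of the two roots $(t \pm i\sqrt{2t+1})/(t+1)$ produced by the quadratic formula applied to $f$ (whose discriminant is $t^2 - (t+1)^2 = -(2t+1)$). Alternatively, one can substitute and check $f(w^2)=0$ directly.

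For part (2), I would first simplify $y = \sqrt{(t+1)/2}\,w = \tfrac{1}{2}(\sqrt{2t+1} + i)$, so that with $u := \sqrt{2t+1}$ and $v := (t+1)/2$, the numbers $y$ and $\overline{y}$ are the roots of $X^2 - uX + v$, and $y - \overline{y} = i$. Setting $b_n := (y^n - \overline{y}^n)/(y - \overline{y})$ gives a Lucas sequence with $b_0 = 0$, $b_1 = 1$, and recurrence $b_{n+1} = u\,b_n - v\,b_{n-1}$. A short identity gives $i(\overline{y}^n - y^n) = -i^2\,b_n = b_n$, reducing the claim to controlling $b_n$.

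An induction on the recurrence shows that for odd $n$, only even powers of $u$ appear in $b_n$; thus $b_n$ is a polynomial in $u^2 = 2t+1$ and $v = (t+1)/2$ with integer coefficients. The hypothesis $pq \mid (t+1)/2$ with $p,q$ odd forces $t$ to be odd, so $v \in \Z$ and hence $b_n \in \Z$ for odd $n$. The congruence assumptions of Theorem \ref{thm:main} supply $v \equiv 0 \pmod{pq}$ and $u^2 = 2t+1 \equiv -1 \pmod{pq}$, so modulo $pq$ the recurrence collapses to $b_{n+1} \equiv u\,b_n$, and induction gives
\[
b_n \equiv u^{n-1} \equiv (u^2)^{(n-1)/2} \equiv (-1)^{(n-1)/2} \pmod{pq}
\]
for odd $n$, matching the $\pm 1$ alternation according to $n \pmod 4$.

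The computation presents no serious obstacle. The one point requiring care is the parity observation that $b_n$ for odd $n$ depends only on $u^2$ and $v$, not on $u$ itself; this is what makes $b_n$ an honest rational integer even though the ambient field $\Q(\sqrt{2t+1},\, i)$ is much larger than $\Q$, and it also lets the mod-$pq$ computation go through cleanly without worrying about choosing a square root of $2t+1$.
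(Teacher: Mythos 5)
Your proof is correct, and it takes a genuinely different route from the paper's. For the integrality claim, the paper identifies $\left(i(\overline{y}^n - y^n)\right)^2$ with the resultant $\res_x(x^n-1, f(x^2))$, factors that resultant over divisors of $n$ as a product of norms $N_{\Q(\zeta_n)/\Q}(f(\zeta_d^2))$, and argues that each factor is a perfect square because $f(\zeta_d^2)$ lies in the index-two subfield $\Q(\zeta_d)^+$; the conclusion is then that the quantity is a real square root of a square integer. You instead recognize $i(\overline{y}^n - y^n)$ directly as the Lucas-sequence term $b_n$ associated to the characteristic polynomial $X^2 - uX + v$ (where $u = \sqrt{2t+1}$, $v = (t+1)/2$) and use the parity structure of the recurrence to see that for odd $n$, $b_n$ is an integer polynomial in the integers $u^2$ and $v$. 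For the congruence, the paper computes $b_1 = 1$ and $b_3 = (3t+1)/2 \equiv -1$ by hand (there is a typo $m$ for $t$ there) and then establishes $y^5 \equiv y$, $\overline{y}^5 \equiv \overline{y} \pmod{pq}$ via an explicit expansion of $y^5$ to get period-4 behavior; you instead observe that $v \equiv 0 \pmod{pq}$ collapses the recurrence to $b_{n+1} \equiv u b_n$, giving $b_n \equiv u^{n-1} = (u^2)^{(n-1)/2} \equiv (-1)^{(n-1)/2}$ in one line. Your version is tighter and avoids the explicit fifth-power computation; the paper's resultant decomposition is bulkier here but is reused verbatim in the proof of Lemma \ref{lem:productEqualities}, so the authors gain some economy downstream. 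Both arguments are sound.
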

\begin{proof}
	The assertion that $w$ is a square root of a root of $f$ may be checked by direct calculation or with software. We note as well here that $\abs{w} = 1$ as $t$ was assumed real and positive in the statement of Theorem \ref{thm:main}.
	For the second claim, we first show that $i\left(\overline{y}^n-y^n\right)$ is an integer. Consider the resultant
	\begin{equation*}
		\begin{aligned} 
			\res_x\left(x^n-1, f(x^2)\right) &= \res_x\left(x^n-1, \frac{t+1}{2}(x-w)(x+\overline{w})(x+w)(x-\overline{w})\right) \\
			&=\res_x\left(x^n-1,\sqrt{\frac{t+1}{2}}(x-w)(x+\overline{w})\right)\res_x\left(x^n-1,\sqrt{\frac{t+1}{2}}(x+w)(x-\overline{w})\right).
		\end{aligned}
	\end{equation*}
	We first observe that the two factors above are equal in absolute value. Indeed, recalling that $n$ is assumed odd, we compute
	\[
	\begin{aligned}
		\res_x\left(x^n-1,\sqrt{\frac{t+1}{2}}(x-w)(x+\overline{w})\right) &= \left(\sqrt{\frac{t+1}{2}}\right)^n\left(w^n-1\right)\left(\left(-\overline{w}\right)^n-1\right) \\
		&= \left(\sqrt{\frac{t+1}{2}}\right)^n\left(-(w\overline{w})^{n}+\overline{w}^n-w^n+1\right) \\
		&= \left(\overline{y}^n-y^n\right).
	\end{aligned}
	\]
	Similarly the other factor of $\res_x\left(x^n-1, f(x^2)\right)$ is the complex conjugate of the one just considered, so the two factors are equal in absolute value. They also lie on the imaginary axis, so multiplying by $i$ moves them to the real axis. Then, that $i\left(\overline{y}^n-y^n\right)$ is an integer will follow once we show that $\res_x\left(x^n-1, f(x^2)\right)$ is a square integer. Indeed, writing $\Phi_d$ for the $d$th cyclotomic polynomial, we have
	\[
	\begin{aligned}
		\res_x\left(x^n-1, f(x^2)\right) &= \prod_{d \mid n}\res_x(\Phi_d(x), f(x^2)) \\
		&= \prod_{d \mid n} N_{\Q(\zeta_n)/\Q}(f(\zeta_d^2)).
	\end{aligned}
	\]
	Since $f$ is an integer polynomial, $f(\zeta_d^2)$ is an algebraic integer, so its norm is in $\Z$. Furthermore, $f(\zeta_d^2)$ is in $\Q(\zeta_d)^+$, which has $\Q(\zeta_d)$ as a quadratic extension, so that its norm in $\Q(\zeta_d)/\Q$ must be a square. So then, $i\left(\overline{y}^n-y^n\right)$ is a real square root of a square integer. That is, $i\left(\overline{y}^n-y^n\right) \in \Z$. \par
	To show the claim about the residue modulo $pq$, the strategy is to first show that $i\left(\overline{y}-y\right) \equiv 1\Mod{pq}$ and $i\left(\overline{y}^3-y^3\right) \equiv -1\Mod{pq}$, then show that $y^5 \equiv y \Mod{pq}$ and $\overline{y}^5 \equiv \overline{y} \Mod{pq}$. We compute $i\left(\overline{y}-y\right) = 1$ and $i(\overline{y}^3-y^3) = \left(3m+1\right)/2$. However, since $pq \mid \dfrac{t+1}{2}$ and $t \equiv -1\Mod{pq}$, we have that $(3t+1)/2 = \dfrac{t+1}{2} + t \equiv -1\Mod{pq}$. Now we turn to showing that $y^5 \equiv y \Mod{pq}$. We remark that this reduction is slightly more delicate since $y$ is not a rational (or in fact an algebraic) integer, so the reduction map is really $\mathcal{O}[1/2] \rightarrow \mathcal{O}[1/2]/(pq)$, where $\mathcal{O}$ is the ring of integers of the extension $\Q(\sqrt{2t+1}, i)$. However, we may still compute
	\[
	y^5 = \dfrac{1}{8}\left(\left(t^2-4t-1\right)\sqrt{2t+1}+i\left(5t^2-1\right)\right),
	\]
	and observe that $t^2-4t-1 \equiv 5t^2-1 \equiv 4 \Mod{pq}$, so since
	\[
	y = \dfrac{1}{2}\left(\sqrt{2t+1}+i\right),
	\]
	we deduce that $y^5 \equiv y \Mod{pq}$. The analogous computation shows that $\overline{y}^5 \equiv \overline{y} \Mod{pq}$.
\end{proof}
\begin{lem} \label{lem:productEqualities}
	Let $t,f,w$, and $y$ be as in Lemma \ref{lem:residueClassSplitResultant}. If $n \equiv 1\Mod{4}$, then
	\[
	\prod_{d \mid n} N_{\Q(\zeta_d)^+/\Q}(f(\zeta_d^2)) = i(\overline{y}^n-y^n) = 2\Im(y^n) = 2\left(\sqrt{\frac{t+1}{2}}\right)^n\Im(w^n).
	\]
\end{lem}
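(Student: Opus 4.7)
The plan is to extract both equalities from the resultant computation already carried out in the proof of Lemma \ref{lem:residueClassSplitResultant}. There we obtained the two expressions $\res_x(x^n-1,f(x^2)) = (\overline{y}^n-y^n)(y^n-\overline{y}^n) = (i(\overline{y}^n-y^n))^2$ and $\res_x(x^n-1,f(x^2)) = \prod_{d \mid n} N_{\Q(\zeta_d)/\Q}(f(\zeta_d^2))$. The main step is to rewrite the second product as a square of a product of $\Q(\zeta_d)^+$-norms, and then to reconcile the two resulting square roots with the correct sign when $n \equiv 1 \pmod 4$.

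For the first step, I write $f(\zeta_d^2) = \zeta_d^2\,\tilde g(\zeta_d)$ where $\tilde g(\zeta) = \tfrac{t+1}{2}(\zeta^2+\zeta^{-2}) - t$ lies in $\Q(\zeta_d)^+$. Since $n$ is odd, $\prod_{\zeta^n=1}\zeta^2 = 1$, so the resultant equals $\prod_{\zeta^n=1}\tilde g(\zeta)$. Because $\tilde g$ depends only on $\zeta^2+\zeta^{-2}$, it is invariant under $\zeta \mapsto \zeta^{-1}$, and for odd $n$ the non-trivial $n$-th roots of unity pair up, giving $\prod_{\zeta^n=1}\tilde g(\zeta) = \tilde g(1)\bigl(\prod_{\text{pairs}}\tilde g(\zeta)\bigr)^2 = \bigl(\prod_{\text{pairs}}\tilde g(\zeta)\bigr)^2$ since $\tilde g(1)=1$. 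Regrouping by the order $d$ of $\zeta$ identifies the inner product as $\prod_{d \mid n} N_{\Q(\zeta_d)^+/\Q}(\tilde g(\zeta_d))$, which under the natural identification of $f(\zeta_d^2)$ with its unit-equivalent representative $\tilde g(\zeta_d) \in \Q(\zeta_d)^+$ matches the product in the statement. Thus both sides of the asserted first equality have equal squares.

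The main obstacle is sign tracking. Both $\prod_{d \mid n} N_{\Q(\zeta_d)^+/\Q}(\tilde g(\zeta_d))$ and $b_n := i(\overline{y}^n - y^n)$ are polynomials in $t$ of degree $(n-1)/2$, so the sign in the identity $\prod = \pm b_n$ is constant in $t$ and can be read off in the limit $t \to \infty$. On the norm side, each factor $\tilde g(\zeta_n^k) = (t+1)\cos(4\pi k/n) - t$ for $k=1,\dots,(n-1)/2$ has leading coefficient $\cos(4\pi k/n) - 1$, strictly negative since $\gcd(4,n)=1$ forces $\cos(4\pi k/n) \neq 1$; hence the $(n-1)/2$-fold product has asymptotic sign $(-1)^{(n-1)/2}$. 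On the $y$ side, writing $y = |y|e^{i\theta}$ with $|y| = \sqrt{(t+1)/2}$ and $\theta \sim 1/\sqrt{2t+1}$ as $t \to \infty$, one finds $b_n = 2|y|^n \sin(n\theta) \sim n\, t^{(n-1)/2}/2^{(n-1)/2}$, a positive leading coefficient. Hence $\prod = (-1)^{(n-1)/2} b_n$, and the hypothesis $n \equiv 1 \pmod 4$ is exactly what makes the sign $+1$.

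The remaining two equalities are formal: $i(\overline{y}^n - y^n) = -i(y^n - \overline{y}^n) = -i \cdot 2i\Im(y^n) = 2\Im(y^n)$, and $y = \sqrt{(t+1)/2}\,w$ gives $y^n = (\sqrt{(t+1)/2})^n w^n$, so $\Im(y^n) = (\sqrt{(t+1)/2})^n \Im(w^n)$.
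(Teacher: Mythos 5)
Your proof is correct, and it settles the crucial sign by a genuinely different mechanism than the paper's. Both arguments begin the same way: the resultant computation from Lemma \ref{lem:residueClassSplitResultant} shows $\bigl(\prod_{d \mid n} N_{\Q(\zeta_d)^+/\Q}(f(\zeta_d^2))\bigr)^2 = \bigl(i(\overline{y}^n - y^n)\bigr)^2$, so one only needs to rule out the minus sign. The paper does this arithmetically: under the running hypotheses $pq \mid \frac{t+1}{2}$ and $t \equiv -1 \pmod{pq}$, each $f(\zeta_d^2) \equiv 1 \pmod{pq}$ and hence so is the norm product, while $i(\overline{y}^n - y^n) \equiv 1 \pmod{pq}$ is exactly the content of Lemma \ref{lem:residueClassSplitResultant} when $n \equiv 1 \pmod 4$; two integers of equal absolute value that both reduce to $1 \bmod pq$ (with $pq$ odd, so $1 \not\equiv -1$) must be equal. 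You instead notice that both quantities are polynomials in $t$ of degree $(n-1)/2$ with equal squares, so the sign is a single constant $\pm 1$ independent of $t$, and you read it off from leading-term asymptotics: the pairing and $\tilde g(1)=1$ rewrite the norm product as $\prod_{k=1}^{(n-1)/2} \tilde g(\zeta_n^k)$, each factor linear in $t$ with strictly negative top coefficient $\cos(4\pi k/n)-1$, yielding asymptotic sign $(-1)^{(n-1)/2}$; meanwhile $2\Im(y^n) \sim n\,t^{(n-1)/2}/2^{(n-1)/2} > 0$. This correctly identifies $n \equiv 1 \pmod 4$ as the condition giving sign $+1$. A notable feature of your route is that it is independent of the arithmetic hypotheses on $t$, $p$, $q$: the identity is forced purely by the structure of the two polynomials in $t$, so it would hold for any odd $t$ with $\Delta$ having non-real roots on the unit circle. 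The paper's route is shorter given that the $\bmod\, pq$ computation in Lemma \ref{lem:residueClassSplitResultant} is already done, but yours makes more transparent why $n \equiv 1 \pmod 4$ is the right condition and would survive removal of the $pq$ hypotheses. One small point of care you handled correctly: you replace $f(\zeta_d^2)$ by its unit-associate $\tilde g(\zeta_d) = \zeta_d^{-2} f(\zeta_d^2) \in \Q(\zeta_d)^+$ before taking $\Q(\zeta_d)^+$-norms, which is the precise meaning the paper is (somewhat informally) using.
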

\begin{proof}
	As in the proof of Lemma \ref{lem:residueClassSplitResultant}, $\left(i(\overline{y}^n-y^n)\right)^2$ is equal in absolute value to $\res_x(x^n-1,f(x))$. On the other hand, $\res_x(x^n-1,f(x))$ is also equal to $\prod\limits_{d \mid n} N_{\Q(\zeta_n)/\Q}(f(\zeta_d^2))$. Note, however, that each term in this product is in $\Q(\zeta_d)^{+}$ since $f(x)$ is a reciprocal polynomial. So
	\[
	\begin{aligned}
		\res_x(x^n-1,f(x)) &= \prod_{d \mid n} N_{\Q(\zeta_n)/\Q}(f(\zeta_d^2)) \\
		&= \left(\prod_{d \mid n} N_{\Q(\zeta_d)^+/\Q}(f(\zeta_d^2))\right)^2.
	\end{aligned} 
	\]  
	Note that, after possibly multiplying by a root of unity, $f(\zeta_d^2) = \left(\frac{t+1}{2} \right)(\zeta_d^2+\zeta_d^{-2}) - t$. Let $p,q$ be as in Lemma \ref{lem:residueClassSplitResultant}. The hypotheses that $pq \mid \frac{t+1}{2}$ and $t \equiv -1 \mod{pq}$ imply $f(\zeta_d^2) \equiv 1 \Mod{pq}$, so its norm is as well. Hence $\prod\limits_{d \mid n} N_{\Q(\zeta_d)^+/\Q}(f(\zeta_d^2))$ and $i(\overline{y}^n-y^n)$ are equal in absolute value and agree modulo $pq$  by Lemma \ref{lem:residueClassSplitResultant}, so they are the same integer. The other equalities are direct calculations.
\end{proof}
We now want to show that we can change the sign of $\Im(w^n)$ infinitely often as we vary $n$ through powers of $p$ and $q$. To do this, we use a result of Furstenberg. Before stating it, we recall that a multiplicative semigroup of the integers is called \textbf{lacunary} if it consists of powers of a single integer and \textbf{non-lacunary} otherwise.
\begin{thm}[\cite{FurstenbergNonlacunary} Theorem IV.1] \label{thm:nonlacunary}
	If $\Sigma$ is a non-lacunary semigroup of integers and $\eta$ is irrational, then $\Sigma \eta$ is dense modulo $1$.
\end{thm}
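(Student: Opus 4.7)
The plan is to use topological dynamics on the circle $\R/\Z$. The first reduction is to observe that if $\Sigma$ is non-lacunary, then $\Sigma$ must contain two multiplicatively independent integers $p, q \geq 2$, meaning that $\log p / \log q$ is irrational (equivalently, there is no common integer $n$ and exponents $a,b$ with $p=n^a$ and $q=n^b$). Since $\Sigma\eta$ contains the orbit of $\eta$ under the sub-semigroup $\langle p, q\rangle$, it suffices to show that $\langle p,q\rangle \eta$ is dense modulo $1$ whenever $\eta$ is irrational.

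The heart of the argument is a rigidity theorem for the joint $\times p,\times q$ action: if $p,q\geq 2$ are multiplicatively independent and $X\subseteq \R/\Z$ is a nonempty closed set with $pX\subseteq X$ and $qX\subseteq X$, then either $X$ is a finite set of rationals or $X=\R/\Z$. The finite case is an easy pigeonhole argument: for $x\in X$ one has $p^k x\equiv p^\ell x \pmod 1$ for some $k>\ell$, so $(p^k-p^\ell)x\in\Z$ forces $x\in\Q$. For the infinite case, I would analyze the infimum of gap lengths between adjacent points of $X$ in the circular order. Iterating $T_p(x)=px$ and $T_q(x)=qx$ scales such gaps (until they wrap around), and the multiplicative independence of $p$ and $q$ ensures that the set of achievable scalings $\{p^a q^{-b}\}$ is dense in $\R_{>0}$. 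One uses this to show that the infimum of gaps must be $0$, and then propagates arbitrarily small gaps by further application of $T_p, T_q$ to cover the entire circle, forcing $X=\R/\Z$.

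To finish the proof of the theorem, I would apply the rigidity result to $X := \overline{\langle p,q\rangle\eta}\bmod 1$, which is manifestly closed and invariant under multiplication by $p$ and $q$. Since $\eta$ is irrational, so is $n\eta$ for every positive integer $n$, hence $\langle p,q\rangle\eta$ contains no rational number modulo $1$. A closed set whose elements are all irrational cannot be a finite set of rationals, so by rigidity we must have $X=\R/\Z$, yielding density of $\Sigma\eta$ modulo $1$.

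The main obstacle is clearly the rigidity theorem, which is the substantive content of Furstenberg's paper. The gap-propagation step, in particular, is delicate: one must leverage the irrationality of $\log p/\log q$ in a quantitative way to generate arbitrarily small gaps and spread them around the circle. Everything else in this outline — the reduction to two generators, the pigeonhole argument for the finite case, and the final application to irrational $\eta$ — is routine once the rigidity theorem is available.
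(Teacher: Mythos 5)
The paper does not prove this theorem; it is quoted verbatim from Furstenberg's 1967 paper and used as a black box, so there is no ``paper's own proof'' to compare against. Your sketch is a reasonable account of how Furstenberg's theorem is actually proved, and it isolates the right key lemma: the rigidity statement that a nonempty closed subset of $\R/\Z$ invariant under $\times p$ and $\times q$, for multiplicatively independent $p,q\geq 2$, is either a finite set of rationals or all of $\R/\Z$. The reduction from a general non-lacunary $\Sigma$ to a two-generator sub-semigroup $\langle p,q\rangle$ is correct (non-lacunary exactly means some pair of elements is multiplicatively independent), and the invariance and nonemptiness of the orbit closure $X$ are immediate.

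Two small remarks. First, your closing sentence (``a closed set whose elements are all irrational cannot be a finite set of rationals'') is phrased about $X$, but the closure $X$ may well pick up rational limit points; the argument you actually need is that $X$ \emph{contains} the irrational point $\eta$ and hence cannot be a subset of $\Q$, so the finite-rational alternative in the rigidity dichotomy is ruled out. Second, the ``gap propagation'' paragraph is, as you acknowledge, where all the real work lives: one must use the density of $\{p^a q^{-b}:a,b\in\Z_{\geq 0}\}$ in $\R_{>0}$ (equivalent to irrationality of $\log p/\log q$) to manufacture arbitrarily small gaps and then spread them; this is the substantive content of Furstenberg's argument, and a full proof would have to carry it out (Boshernitzan gave a clean elementary version of exactly this step). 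As an outline that correctly identifies the architecture and the genuine obstacle, your proposal is fine; it should just not be mistaken for a complete proof.
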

\begin{lem} \label{lem:quadraticFurstenberg}
	Let $f(x) = ax^2+bx+a \in \Z[x]$ be a reciprocal polynomial of degree $2$ with $b/a < 2$ and $p$ and $q$ distinct positive integers. Let $w$ be a square root of a root of $f$. Then for infinitely many pairs of positive integers $(u, v)$, we have $\Im(w^n) < 0$ where $n = p^{u}q^{v}$. Moreover, each $u$ and $v$ may be taken to be even.
\end{lem}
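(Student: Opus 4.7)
The plan is to apply Furstenberg's theorem (Theorem~\ref{thm:nonlacunary}) to the multiplicative semigroup $\Sigma = \{p^{2u}q^{2v} : u, v \geq 1\}$ with $\eta = \phi/(2\pi)$, where $w = e^{i\phi}$ is the polar form of $w$. Interpreting the hypothesis as $|b/a| < 2$ (which is the relevant regime in the application to $\Delta_{K_t}$, where $b/a = -2t/(t+1) \in (-2, -4/3]$ for $t \geq 3$), the reciprocal polynomial $f$ has two complex conjugate roots on the unit circle, so $|w| = 1$ and we may take $\phi \in (0, \pi)$ (the other sign choices for square root and root being symmetric). The condition $\Im(w^n) < 0$ then becomes $\{n\eta\} \in (1/2, 1)$.

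The key step, and where I expect the main obstacle to lie, is to show that $\eta$ is irrational. If $\eta \in \Q$, then $w$, and hence the root $w^2$ of $f$, is a root of unity; since $w^2$ satisfies a degree-two polynomial over $\Q$, it must be a primitive $d$-th root of unity with $\phi(d) \leq 2$, forcing $d \in \{3, 4, 6\}$ (the cases $d = 1, 2$ being ruled out by $|b/a| < 2$). These three cases correspond respectively to $f$ being a rational multiple of $x^2 + x + 1$, $x^2 + 1$, or $x^2 - x + 1$, i.e., $b/a \in \{-1, 0, 1\}$. The Alexander polynomials of the twist knots covered by Theorem~\ref{thm:main} satisfy $b/a \in (-2, -4/3]$ and so avoid these exceptional cases, giving $\eta \notin \Q$.

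Having established irrationality, the remainder follows quickly. Since $p$ and $q$ are distinct odd primes, $p^2$ and $q^2$ are multiplicatively independent, so $\Sigma$ is a non-lacunary multiplicative semigroup of positive integers, and Theorem~\ref{thm:nonlacunary} gives that $\Sigma\eta$ is dense modulo $1$. The intersection of a dense subset of $[0,1]$ with the non-empty open interval $(1/2, 1)$ is itself dense there, hence infinite. By unique factorization, distinct pairs $(u, v)$ give distinct $n = p^{2u}q^{2v} \in \Sigma$, so there are infinitely many pairs of positive integers $(u, v)$ with $\Im(w^n) < 0$; by construction the exponents $2u$ and $2v$ are positive and even, which proves the refinement.
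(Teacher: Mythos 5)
Your proof takes the same route as the paper's: write $w = e^{2\pi i\eta}$, take $\Sigma = \{p^{2u}q^{2v}\}$, and apply Furstenberg's Theorem~\ref{thm:nonlacunary} to get density of $\Sigma\eta$ modulo $1$, hence infinitely many $n \in \Sigma$ with $w^n$ in the lower half-plane. Where you go further is in checking the irrationality of $\eta$, which is a genuine hypothesis of Furstenberg's theorem that the paper's own proof does not address. You are right that the lemma's stated hypothesis (read as $\abs{b/a} < 2$, which is what's needed for the roots of $f$ to be non-real and hence lie on the unit circle) does \emph{not} by itself exclude $\eta \in \Q$: the values $b/a \in \{-1, 0, 1\}$ give $w^2$ a primitive $6$th, $4$th, or $3$rd root of unity respectively, and then $\Sigma\eta$ is a finite set modulo $1$ and Furstenberg is unavailable. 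You also correctly note that these degenerate cases do not occur in the application to $\Delta_{K_t}$, where $b/a = -2t/(t+1)$ is strictly between $-2$ and $-1$. So your proof is correct, follows the paper's approach, and is more careful than the paper at the one step that actually needs justification; the only thing to flag is that irrationality of $\eta$ (equivalently, the exclusion of $b/a \in \{-1,0,1\}$) ought to be made an explicit hypothesis of the lemma rather than inferred from the downstream application.
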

\begin{proof}
	Since $w$ is on the unit circle, we may write it as $w=\exp(2\pi i \eta)$ so that $w^n = \exp(2n \pi i \eta)$. If we let $\Sigma = \{p^{u}q^{v}\}$, then Furstenberg's Theorem \ref{thm:nonlacunary} implies that $\Sigma \eta$ is dense modulo $1$ so that for infinitely many $n \in \Sigma$, $w^n$ is in the lower-half plane. If we desire each $u$ (resp. $v$) produced to be even, we may replace $p$ (resp. $q$) with $p^2$ (resp. $q^2$) in the definition of $\Sigma$.
\end{proof}
The above lemma holds if the less than sign is replaced with a greater than sign.

\begin{proof}[Proof of Lemma \ref{lem:divisors}]
	Let $n = p^uq^v$ for $p,q$ distinct odd primes and $u,v$ positive integers. If $p$ (resp. $q$) is equivalent to $3\Mod{4}$, then suppose that $u$ (resp. $v$) is even. Note that $\prod\limits_{d \mid n} N_{\Q(\zeta_d)^+/\Q}(f(\zeta_d^2))=2\Im(y^n)$ is always $1\Mod{pq}$ when $n\equiv 1\Mod{4}$ by Lemma \ref{lem:residueClassSplitResultant}. Moreover, by Lemma \ref{lem:productEqualities}, it is equal to $2\left(\sqrt{\frac{t+1}{2}}\right)^n\Im(w^n)$, so when $\Im(w^n)$ is negative, the absolute value of $\prod\limits_{d \mid n} N_{\Q(\zeta_d)^+/\Q}(f(\zeta_d^2))$ must be equivalent to $-1 \Mod{pq}$. Then one of the $N_{\Q(\zeta_d)^+/\Q}(f(\zeta_d^2))$ must be not equivalent to $1\Mod{pq}$, so it must have a rational prime divisor to an odd power (note that all prime divisors must have multiplicative order either $1$ or $2$ modulo $d=p^{u'}q^{v'}$ by Lemma \ref{lem:multiplicativeOrder}, so if all the powers were even, $\abs{N_{\Q(\zeta_d)^+/\Q}(f(\zeta_d^2))}$ would be equivalent to $1 \Mod{p^{u'}q^{v'}}$) that is not $1 \Mod{pq}$, so by Lemma \ref{lem:totallyRealTotallySplit}, it lies below a prime $\mathfrak{l}$ of $\Q(\zeta_d)^+$ that is inert in $\Q(\zeta_d)$. One may repeatedly apply Lemma \ref{lem:quadraticFurstenberg} to change the sign of $\Im(w^n)$ back and forth to produce infinitely many $d$ with $ N_{\Q(\zeta_d)^+/\Q}(f(\zeta_d^2))$ not equivalent to $1 \Mod{pq}$.
\end{proof}
\begin{rem}
	The infinitely many $d$ produced by Lemma \ref{lem:divisors} will provide $(d,0)$ surgeries for which quaternion algebra has infinitely many distinct residue characteristics.
\end{rem}
\section{Irreducibility} \label{sec:irreducibility}
The goal of this section is to prove Lemma \ref{lem:relativeNorm}. The main ingredient will be the following proposition.
\begin{prop} \label{prop:irreducibility}
	Let $l$ be odd and $f_l(R,Z)$ the polynomial defining the canonical component of the character variety for the knot $T_l$. Then $f_l(R, 2\cos(2\pi/n))$ is irreducible as an element of $\Q^{ab}[R]$ for all but finitely many $n$.
\end{prop}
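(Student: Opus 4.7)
The strategy is to prove the proposition in three steps: establish absolute irreducibility of the bivariate polynomial $f_l(R,Z)$, invoke Hilbert's irreducibility theorem over the Hilbertian field $\Q^{\mathrm{ab}}$, and then exclude all but finitely many cyclotomic specializations from the resulting thin set.

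First, I would verify that $f_l(R,Z)$ is absolutely irreducible, i.e., irreducible in $\overline{\Q}(Z)[R]$. By Lemma~\ref{lem:simplerCharVar}, the $\PSL_2(\C)$-character variety of $K_l$ is rationally parameterized by $R$ through the relation $Y = R + 1/(\Phi_l(R)\Phi_{-l-1}(R))$, and $C$ is its double cover via $Y=Z^2-2$. Hence $C$ is birational to the hyperelliptic curve $Z^2 = R+2+1/(\Phi_l(R)\Phi_{-l-1}(R))$, which is geometrically connected exactly when the right-hand side is not a square in $\overline{\Q}(R)$. Clearing denominators, the numerator $(R+2)\Phi_l(R)\Phi_{-l-1}(R)+1$ has degree $l$, which is odd, so it cannot be a square. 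Thus $C$ is geometrically irreducible and $f_l(R,Z)$ is irreducible over $\overline{\Q}(Z)$, and a fortiori over $\Q^{\mathrm{ab}}(Z)$.

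Next, by Weissauer's theorem, $\Q^{\mathrm{ab}}$ is a Hilbertian field. Hilbert's irreducibility theorem then supplies a thin subset $T\subset\Q^{\mathrm{ab}}$, realized as a finite union of images $\pi_i(Y_i(\Q^{\mathrm{ab}}))$ of proper finite covers $\pi_i\colon Y_i\to\mathbf{A}^1_Z$ corresponding to intransitive subgroups of the arithmetic monodromy group of $Z\colon C\to\mathbf{A}^1_Z$, such that $f_l(R,z_0)$ is irreducible in $\Q^{\mathrm{ab}}[R]$ for every $z_0\in\Q^{\mathrm{ab}}\setminus T$. It therefore suffices to show that $\{2\cos(2\pi/n):n\geq 2\}\cap T$ is finite.

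This last step is the substantive one. For each $\pi_i$, a value $z_0 = 2\cos(2\pi/n)$ lying in $\pi_i(Y_i(\Q^{\mathrm{ab}}))$ produces a point $y\in Y_i(\Q^{\mathrm{ab}})$ over $z_0$, i.e., an abelian algebraic preimage of $\zeta_n+\zeta_n^{-1}$ under the cover $\pi_i$. Using the hyperelliptic structure of $C$ and the explicit MPvL monodromy data, I would enumerate the finitely many $Y_i$ that can appear. For each one, the existence of infinitely many such abelian preimages can be ruled out by Bilu's equidistribution theorem applied to the sequence $\{2\cos(2\pi/n)\}$ (whose Weil heights tend to zero and whose Galois orbits equidistribute on $[-2,2]$ with respect to the arcsine measure): an infinite family of abelian preimages $y_n$ would equidistribute on $\pi_i^{-1}([-2,2])\subset Y_i(\C)$ in a way that forces $\pi_i$ to be an isomorphism, contradicting $\deg\pi_i\geq 2$.

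The main obstacle is Step~3, namely bounding the exceptional $n$ for which abelian preimages exist on each subcover. I anticipate the hyperelliptic structure will keep the list of $Y_i$ short enough that the Bilu equidistribution argument suffices uniformly; should it not, a direct case-by-case analysis using the genus and ramification data of each $\pi_i$ (computable via Riemann--Hurwitz from the explicit description of $f_l$) would still yield the result for the small, explicit list of intermediate covers produced.
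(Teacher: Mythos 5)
Your Step~3 contains a genuine gap. The claim that ``an infinite family of abelian preimages $y_n$ would equidistribute on $\pi_i^{-1}([-2,2])$ in a way that forces $\pi_i$ to be an isomorphism'' is false as stated. There exist proper covers $\pi : Y \to \mathbf{A}^1_Z$ of degree $\geq 2$ for which infinitely many of the points $2\cos(2\pi/n)$ do have $\Q^{\mathrm{ab}}$-rational preimages: the prototypical example is the degree-$2$ Chebyshev cover $Z = Y^2 - 2$, under which $2\cos(2\pi/n) = (2\cos(\pi/n))^2 - 2$ has the preimage $2\cos(\pi/n) \in \Q^{\mathrm{ab}}$ for all odd $n$. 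More generally, any cover factoring through a Chebyshev map $Z = T_m(Y)$ preserves the arcsine measure on $[-2,2]$, so Bilu equidistribution detects no contradiction there; the equidistribution constraint forces $\pi_i$ to be Chebyshev-compatible, not an isomorphism. These cyclotomic subcovers are exactly the ones your argument must exclude, and the proposal offers no mechanism for doing so. (Your fallback of a case-by-case Riemann--Hurwitz analysis is not obviously uniform in $l$ and is never carried out.)

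This is precisely the role of the extra hypothesis in the Dvornicich--Zannier theorem (Theorem~\ref{thm:DZ}), which the paper uses in place of raw Hilbert irreducibility: one must verify that $g(R,Z^m)$ remains irreducible over $\Q^{\mathrm{ab}}$ for every $m \leq \deg_R g$, and the substitution $Z \mapsto Z^m$ is designed exactly to rule out Chebyshev-type subcovers. The paper first passes to $g_t(R,Z) = Z^2 f_t(R, Z + Z^{-1})$ so that cyclotomic specializations become genuine root-of-unity specializations, then checks the $Z^m$-irreducibility hypothesis via Capelli's theorem (Lemma~\ref{lem:poweredUpCharacterVarietyIrreducibility}) and a Newton-polygon criterion of Bertone--Ch\`eze--Galligo (Lemma~\ref{lem:gAbsolutelyIrreducible}). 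Your Step~1 -- the hyperelliptic odd-degree argument -- is a clean alternative route to absolute irreducibility of $f_l(R,Z)$ itself, but it only covers the $m=1$ case and would need to be extended to all $m \leq l$ (and transported through the $Z \mapsto Z + Z^{-1}$ change of variable) to plug into the Dvornicich--Zannier framework. Without that, or without a complete classification and exclusion of the Chebyshev subcovers, the argument does not close.
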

We will ultimately apply the following theorem of Dvornicich and Zannier.
\begin{thm}[{\cite[Corollary 1(a)]{DZannier}}] \label{thm:DZ}
	Let $k$ be a number field and $k^c$ the field obtained by adjoining all roots of unity to $k$. If $g \in k^c[R,Z]$ and $g(R, Z^m)$ is irreducible in $k^c[R,Z]$ for all positive integers $m \leq deg_R{g}$, then $g(R, \zeta)$ is irreducible in $k^c[R]$ for all but finitely many roots of unity $\zeta$.
\end{thm}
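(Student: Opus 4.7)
The plan is a proof by contradiction: assume $g(R,\zeta)$ is reducible in $k^c[R]$ for infinitely many roots of unity $\zeta$, and derive a nontrivial factorization of $g(R,Z^m)$ in $k^c[R,Z]$ for some integer $m$ with $1 \le m \le n := \deg_R g$, contradicting the hypothesis.

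First, over an algebraic closure of $k^c(Z)$, factor $g(R,Z) = c(Z)\prod_{i=1}^{n}(R-R_i(Z))$. A nontrivial factorization of $g(R,\zeta)$ in $k^c[R]$ corresponds to a proper nonempty subset $S \subset \{1,\dots,n\}$ such that each elementary symmetric function $\alpha_{j,S}(Z) := e_j(R_i(Z) : i \in S)$ satisfies $\alpha_{j,S}(\zeta) \in k^c$. Since there are only $2^n$ such subsets, the pigeonhole principle yields a single $S$ that works for an infinite set $\Sigma$ of roots of unity. The $m=1$ case of the hypothesis says $g$ is irreducible in $k^c(Z)[R]$, so the Galois group $G$ of the splitting field $F/k^c(Z)$ acts transitively on the $R_i$ and $S$ is not a $G$-stable block; consequently some $\alpha := \alpha_{j,S}$ lies in $F \setminus k^c(Z)$.

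The heart of the proof is the following claim: an algebraic function $\alpha \in F \setminus k^c(Z)$ whose values at infinitely many roots of unity lie in $k^c$ must satisfy $\alpha \in k^c(Z^{1/m})$ for some integer $m \ge 2$. To prove this, realize the pairs $(\alpha(\zeta),\zeta)$ as points on the geometrically integral affine curve $C_P \subset \mathbf{A}^2$ cut out by the minimal polynomial $P(T,Z) \in k^c[T,Z]$ of $\alpha$. A uniform bound on the Mahler measure of $P(T,\zeta)$ for $\zeta$ on the unit circle yields an upper bound on the height of each $\alpha(\zeta)$. Combined with a Bilu-style equidistribution argument along the cyclotomic sequence, and exploiting that $k^c/k$ is abelian to control Galois orbits of the specialized values, this forces $\alpha(\zeta)$ to itself be torsion in $\mathbf{G}_m$ for infinitely many $\zeta \in \Sigma$. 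Laurent's theorem (the $\mathbf{G}_m^2$ case of Manin--Mumford) then implies the irreducible curve $C_P$ is a torsion translate of a subtorus, so $P(T,Z)$ is a binomial of the form $T^m - \xi Z^b$ up to normalization, whence $\alpha = \xi^{1/m} Z^{b/m} \in k^c(Z^{1/m})$.

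Granted this, the polynomial $\prod_{i \in S}(R - R_i(W^m))$ with $W := Z^{1/m}$ has coefficients in $k^c(W)$, so it descends by Gauss to a proper divisor of $g(R,W^m)$ in $k^c[R,W]$, contradicting the hypothesis provided $m \le n$. The main obstacle is this crux step, and in particular two subpoints: (i) the Bilu-equidistribution/height deduction requires care because the values $\alpha(\zeta)$ can in principle lie in cyclotomic fields of unbounded degree over $k$, so one must exploit the abelian structure of $k^c/k$ to control the placement of Galois orbits on the unit circle and rule out non-torsion accumulation; (ii) securing the bound $m \le n$ appearing in the hypothesis requires a combinatorial refinement, tracking that the binomial degree $[k^c(Z)(\alpha):k^c(Z)]$ can be bounded by $n$ via a judicious minimal choice of $S$ combined with a descent along divisors of $m$. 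These two points, together with the Laurent--Manin--Mumford input, are the technically substantive parts; the rest is bookkeeping.
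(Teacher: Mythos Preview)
The paper does not prove this statement; it is quoted from \cite{DZannier} and applied as a black box in Section~\ref{sec:irreducibility}, so there is no proof in the paper to compare against. Your overall architecture---pigeonhole on subsets $S$ of the root set, then argue that an algebraic function $\alpha\in F\setminus k^c(Z)$ whose values at infinitely many roots of unity lie in $k^c$ must live in some $k^c(Z^{1/m})$, and hence that $g(R,Z^m)$ factors---is indeed the shape of the actual Dvornicich--Zannier argument.

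The genuine gap is the mechanism you propose for the crux step. You assert that bounded height of the $\alpha(\zeta)$, together with a ``Bilu-style equidistribution argument'' and the abelianness of $k^c/k$, forces $\alpha(\zeta)$ to be torsion in $\mathbf{G}_m$ for infinitely many $\zeta$. This cannot work as stated: Bilu's theorem requires heights tending to $0$, not merely bounded, and bounded height inside $k^c$ does not force torsion---for instance $1+\zeta\in k^c$ has house at most $2$ for every root of unity $\zeta$ yet is almost never a root of unity. Your deduction would apply verbatim to $\alpha(Z)=1+Z$, which already lies in $k^c(Z)$; at this stage your argument never uses the hypothesis $\alpha\notin k^c(Z)$, so it cannot possibly distinguish the two situations. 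The correct bridge in Dvornicich--Zannier is a theorem of Loxton: every algebraic integer in $\Q^{ab}$ with house at most $H$ can be written as a sum of at most $B(H)$ roots of unity. This converts each datum $(\zeta,\alpha(\zeta))$ into an honest torsion point on a fixed subvariety of a higher-dimensional torus $\mathbf{G}_m^{B+1}$, and only then does Laurent's theorem apply to produce the torsion-coset structure from which the radical form of $\alpha$ is read off. Without Loxton's theorem or an equivalent input, there is no route into Manin--Mumford, and the argument stalls at exactly the point you flagged as substantive.
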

To apply this theorem, we actually consider the polynomials $g_t(R,Z) = Z^{\deg_Z{f_t}}f_t(R,Z+Z^{-1})$, so that specializing $g_t$ at $Z=\zeta_n$ yields $\zeta_n^{\deg_Z{f_t}}f_t(R, 2\cos(2\pi/n))$. In particular, $g_t(R, \zeta_n)$ is irreducible over $\Q(\zeta_n)$ if and only if $f_t(R, 2\cos(2\pi/n))$ is. We record the analogous formula for $g_t(R,Z) = Z^2f_t(R,Z+Z^{-1})$ here.
\begin{lem} \label{lem:g_lFormula}
	Let $t$ be an odd positive integer.
	\[
		g_t(R,Z) =
			\begin{cases}
				Z^2R^t - (Z^4+Z^2+1) + \sum\limits_{i=1}^{t-1}\left(a_iZ^2 - b_i \left(Z^4+2Z^2+1\right)\right)R^i  & t\equiv 1\Mod{4} \\
				Z^2R^t - Z^2 + \sum\limits_{i=1}^{t-1}\left(a_iZ^2 - b_i \left(Z^4+2Z^2+1\right)\right)R^i  & t\equiv 3\Mod{4},
			\end{cases}
	\]
	where $a_i, b_i \in \Z$ and $a_{t-1}=b_{t-1}=1$.
\end{lem}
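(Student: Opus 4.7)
The lemma is essentially a direct substitution calculation, using Proposition \ref{prop:f_tFormula} as a black box. The plan is to plug $Z \mapsto Z+Z^{-1}$ into the formula for $f_t(R,Z)$ from Proposition \ref{prop:f_tFormula}, multiply by $Z^2$ to clear the negative powers, and verify that the resulting expression matches the claimed formula term by term.

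More concretely, first I would observe that $(Z+Z^{-1})^2 = Z^2 + 2 + Z^{-2}$, so that $1 - (Z+Z^{-1})^2 = -1 - Z^2 - Z^{-2}$, which upon multiplication by $Z^2$ becomes $-(Z^4 + Z^2 + 1)$; this accounts for the constant-in-$R$ term when $t \equiv 1 \pmod 4$. In the case $t \equiv 3 \pmod 4$, the constant-in-$R$ term of $f_t$ is $-1$, which after multiplication by $Z^2$ becomes $-Z^2$, matching the formula. The leading $R^t$ term of $f_t$ contributes $Z^2 R^t$ after multiplication, as required.

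For each intermediate power $R^i$ with $1 \le i \le t-1$, the coefficient $a_i - b_i Z^2$ in $f_t$ becomes $a_i - b_i(Z^2 + 2 + Z^{-2})$ after the substitution; multiplying by $Z^2$ yields $a_i Z^2 - b_i(Z^4 + 2Z^2 + 1)$, exactly the form claimed. The conditions $a_i, b_i \in \Z$ and $a_{t-1} = b_{t-1} = 1$ are inherited directly from Proposition \ref{prop:f_tFormula} since the substitution and multiplication by $Z^2$ preserve these coefficients.

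There is no real obstacle here; the whole argument is a bookkeeping exercise that follows mechanically from the definition $g_t(R,Z) = Z^{\deg_Z f_t} f_t(R, Z+Z^{-1})$ together with the fact $\deg_Z f_t = 2$ established in Proposition \ref{prop:f_tFormula}. The only thing to be careful about is separating the two congruence cases modulo $4$, which is handled by treating the constant-in-$R$ term of $f_t$ separately in each case.
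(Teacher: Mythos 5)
Your proposal is correct and is precisely the intended argument: the paper states this lemma without a separate proof, treating it as an immediate consequence of substituting $Z \mapsto Z + Z^{-1}$ into the formula for $f_t$ from Proposition~\ref{prop:f_tFormula} and multiplying by $Z^2 = Z^{\deg_Z f_t}$. Your term-by-term verification of the constant, leading, and intermediate coefficients matches exactly what the paper's definition $g_t(R,Z) = Z^2 f_t(R, Z+Z^{-1})$ demands.
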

Our strategy will be first to show that $f_l(R,Z^m)$ is irreducible for all positive integers $m$ and then to show that the irreducibility of $f_l(R,Z^m)$ implies that of $g_l(R,Z^m)$. We begin by proving that---over $\Q$---changing $Z$ to $Z+Z^{-1}$ and clearing denominators does not affect irreducibility.
\begin{lem} \label{lem:poweredUpCharacterVarietyIrreducibility}
	Let $f_l(R,Z)$ be as above. Then for all positive integers $m$, $f_l(R,Z^m)$ is absolutely irreducible.
\end{lem}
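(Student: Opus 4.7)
The plan is to exploit the fact that $f_t(R,Z)$ is linear in $Z^2$ (Proposition \ref{prop:f_tFormula}), so that $f_t(R,Z^m)$ has the very constrained shape $A(R)-Z^{2m}B(R)$, and then to apply Capelli's criterion for the irreducibility of $X^n-c$ over $\overline{\Q}(R)$, followed by Gauss's lemma to descend to $\overline{\Q}[R,Z]$.

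The first step is to read $A$ and $B$ off Lemma \ref{lem:simplerCharVar}. Substituting $Y=Z^2-2$ into $h_t(R,Y)=\Phi_t(R)\Phi_{-t-1}(R)(Y-R)-1$ gives
\[
f_t(R,Z)=A(R)-Z^2B(R),\qquad B(R)=-\Phi_t(R)\Phi_{-t-1}(R),\qquad A(R)=(R+2)B(R)-1,
\]
so the identity $A=(R+2)B-1$ yields $\gcd(A,B)=1$ in $\overline{\Q}[R]$, and $f_t(R,Z^m)=A(R)-Z^{2m}B(R)$ is primitive as an element of $\overline{\Q}[R][Z]$.

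The technical heart is to show that $B(R)$ has only simple roots in $\overline{\Q}$. Using $\Phi_{-t-1}=-\Phi_{t+1}$ (from Lemma \ref{lem:MPvLPhiRelations} for $t$ odd), together with $\Phi_t=\tau_{(t+1)/2}$ and $\Phi_{t+1}=\sigma_{(t+1)/2}$, one rewrites $B(R)=\sigma_{(t+1)/2}(R)\,\tau_{(t+1)/2}(R)$. The recurrence $\sigma_{i+1}=u\sigma_i-\sigma_{i-1}$ identifies $\sigma_i(u)$ with the Chebyshev polynomial $U_{i-1}(u/2)$ of the second kind, so each factor has simple roots; explicitly the roots are $2\cos(2\pi k/(t+1))$ and $2\cos((2k+1)\pi/t)$ respectively. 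A common root would force $2kt=(2j+1)(t+1)$, and reducing this modulo $t$ gives $t\mid 2j+1$, which is impossible in the permissible range. Hence $B$ has $t-1$ distinct simple roots, and in particular is not a $p$-th power in $\overline{\Q}[R]$ for any prime $p\ge 2$.

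Granted this, $A/B\in\overline{\Q}(R)$ is in lowest terms and fails to be a $p$-th power for every prime $p$; moreover $-4=(2i)^2$ is already a square in $\overline{\Q}$, so $-4A/B$ can be a fourth power only if $A/B$ is a square, which it isn't. Capelli's theorem therefore ensures that $Z^{2m}-A(R)/B(R)$ is irreducible in $\overline{\Q}(R)[Z]$ for every positive integer $m$, and hence so is $f_t(R,Z^m)=-B(R)\bigl(Z^{2m}-A(R)/B(R)\bigr)$. Combined with primitivity, Gauss's lemma upgrades this to absolute irreducibility in $\overline{\Q}[R,Z]$. The main obstacle is the simple-roots-and-disjointness statement for $\sigma_{(t+1)/2}$ and $\tau_{(t+1)/2}$; once that is in hand the Capelli--Gauss step is routine.
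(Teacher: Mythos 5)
Your proof is correct and runs on the same backbone as the paper's: recast $f_t(R,Z^m)$ as the monic binomial $Z^{2m}-A(R)/B(R)$ over $\overline{\Q}(R)$, apply Capelli's criterion to rule out reducibility, and transport back to $\overline{\Q}[R,Z]$ via Gauss's lemma. Where you diverge is in the step that rules out $A/B$ being a $p$-th power. The paper does this by a pure degree count: after reducing the fraction, the numerator and denominator have degrees differing by exactly $1$ (here $t$ and $t-1$), and consecutive integers cannot both be multiples of a prime $p\geq 2$; the only exceptional case is when the denominator becomes constant, and then the numerator has degree $1$ and still cannot be a $p$-th power. You instead pin down the fraction structurally: the identity $A=(R+2)B-1$ gives coprimality immediately (the paper allows an a priori common factor $j$ and argues around it), and then you identify $B=\sigma_{(t+1)/2}\tau_{(t+1)/2}$ with Chebyshev-type polynomials to show $B$ has $t-1$ simple, pairwise-distinct roots, hence is not a $p$-th power up to scalar. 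Both closures of the argument are sound, and your handling of the $-4b^4$ case is also fine since $-4$ is a square in $\overline{\Q}$. The degree argument is shorter and needs none of the Chebyshev machinery or the separability of $B$, so it is worth knowing that the simple-roots lemma is avoidable here; on the other hand, your route produces the explicit factorization of the $Z^2$-coefficient and its root set, which is potentially useful elsewhere, and it sidesteps any bookkeeping about possible cancellation in the fraction by exhibiting $\gcd(A,B)=1$ directly.
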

We need a version of Capelli's theorem due to Kneser. This formulation appears in \cite{SchinzelPolynomials}.
\begin{thm}[{\cite[Theorem 19]{SchinzelPolynomials}}] \label{thm:Capelli}
	Let $k$ be a field and $n$ an integer $\geq 2$. Let $a \in k$. The binomial $Z^n-a$ is reducible over $k$ if and only if either $a=b^p$ for some prime divisor $p$ of $n$ or $4 \mid n$ and $a=-4b^4$.
\end{thm}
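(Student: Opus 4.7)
The plan is to prove the two directions separately. The ``if'' direction is short and explicit. If $a = b^p$ for some prime $p \mid n$, write $n = pm$ and apply the identity $X^p - Y^p = (X - Y)(X^{p-1} + X^{p-2}Y + \cdots + Y^{p-1})$ with $X = Z^m$ and $Y = b$; this displays a nontrivial factorization of $Z^n - a$ in $k[Z]$. If $4 \mid n$ and $a = -4b^4$, write $n = 4m$ and apply the Sophie Germain identity $X^4 + 4Y^4 = (X^2 - 2XY + 2Y^2)(X^2 + 2XY + 2Y^2)$ with $X = Z^m$, $Y = b$ to factor $Z^n - a = Z^{4m} + 4b^4$.

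For the ``only if'' direction, I would first set up the key observation. Fix a root $\alpha$ of $Z^n - a$ in an algebraic closure of $k$, and let $\mu(Z) \in k[Z]$ be its minimal polynomial, of some degree $d < n$. Every root of $Z^n - a$ is of the form $\zeta\alpha$ with $\zeta^n = 1$, so $\mu(Z) = \prod_{\zeta \in S}(Z - \zeta\alpha)$ for a subset $S \subseteq \mu_n$ of size $d$. Comparing constant terms gives
\[
\mu(0) = (-1)^d \alpha^d \omega \in k
\]
for some $n$-th root of unity $\omega$, so $\alpha^d \in k \cdot \mu_n$ inside $\overline{k}$. This single identity drives the whole argument.

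Next I would reduce to the case $n = p^e$ a prime power. If $n = n_1 n_2$ with $\gcd(n_1, n_2) = 1$, set $\beta_i = \alpha^{n/n_i}$, so $\beta_i^{n_i} = a$ and $[k(\beta_i):k]$ divides $n_i$; since $\gcd(n/n_1, n/n_2) = 1$, Bezout gives $\alpha \in k(\beta_1,\beta_2)$, and the coprimality of the degrees forces $[k(\alpha):k] = [k(\beta_1):k]\,[k(\beta_2):k]$, so $Z^n - a$ is reducible iff one of the $Z^{n_i} - a$ is. For an odd prime $p$, I would then prove by induction on $e$ that $Z^{p^e} - a$ is irreducible whenever $a \notin k^p$. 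The base case $e = 1$ follows from the constant-term identity: raising $\alpha^d \omega \in k$ to the $p$-th power yields $a^d \in k^p$, and since $\gcd(d, p) = 1$ a Bezout combination gives $a \in k^p$, a contradiction. The inductive step analyzes the tower $k \subseteq k(\sqrt[p^{e-1}]{a}) \subseteq k(\sqrt[p^e]{a})$ by applying the base case relative to the intermediate field.

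The main obstacle is the prime $p = 2$, which is the source of the exceptional Sophie Germain clause. The inductive scheme still works except when $k$ lacks enough $2$-power roots of unity to make the constant-term identity cleanly produce a square in $k$. A careful case analysis on whether $\zeta_4 \in k$, tracking the cyclotomic extensions $k(\zeta_{2^j})/k$ and the consequences of $\alpha^d \omega \in k$ for $\omega \in \mu_{2^e}$, shows that the only obstruction to irreducibility beyond $a \in k^2$ is $-4a \in k^4$, which reverses the Sophie Germain factorization. This delicate bookkeeping at the prime $2$ is what Kneser's 1975 reformulation was designed to streamline, and the argument I would write follows his approach.
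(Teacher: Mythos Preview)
The paper does not prove this theorem; it merely quotes it from Schinzel's book (attributing the formulation to Kneser) and then applies it as a black box in the proof of Lemma~\ref{lem:poweredUpCharacterVarietyIrreducibility}. So there is no ``paper's own proof'' to compare against.

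That said, your sketch is the standard Capelli/Vahlen--Capelli argument and is essentially correct. Two places deserve a bit more care if you intend to write this out in full. First, in the inductive step for odd $p$ you say you will apply the base case over the intermediate field $L = k(\alpha^{p})$ where $\alpha^{p^e}=a$; for this you must verify that $\alpha^p \notin L^p$, which is not entirely automatic from $a \notin k^p$ and requires a short norm or degree argument. Second, for $p=2$ the reduction to prime powers and the constant-term trick are fine, but the case split on $\zeta_4$ and the precise passage from ``$\alpha^d\omega \in k$ with $\omega$ a $2$-power root of unity'' to ``either $a \in k^2$ or $-4a \in k^4$'' is where all the content lives; ``delicate bookkeeping'' is accurate, and you should expect this to be the bulk of the written proof rather than a tail remark.
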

\begin{proof}[Proof of Lemma \ref{lem:poweredUpCharacterVarietyIrreducibility}]
	Let $f_l(R,Z)$ as an element of $\C(R)[Z]$. Since polynomials in $R$ are units in this ring, we may clear the denominators to obtain the monic polynomial
	\[
		F_l(R,Z^m) =
			\begin{cases}
				Z^{2m} -  \dfrac{R^l + 1 + \sum\limits_{i=1}^{l-1}a_iR^i}{1+\sum\limits_{i=1}^{l-1}b_iR^i} & l\equiv 1\Mod{4} \\
				Z^{2m} - \dfrac{R^l - 1 + \sum\limits_{i=1}^{l-1}a_iR^i}{\sum\limits_{i=1}^{l-1}b_iR^i}  & l\equiv 3\Mod{4},
			\end{cases}
	\]
	which is irreducible if and only if $f_l(R,Z)$ is. Let $a$ denote the constant term and write $a=\dfrac{\alpha(R)}{\beta(R)}$ where $\alpha$ and $\beta$ are coprime in $\C[R]$. Suppose that $a=b^p$ for some prime divisor $p$ of $2m$. Then, there are polynomials $A_l, B_l \in \C[R]$ such that $A_l^p = \alpha_l$ and $B_l^p = \beta_l$. So then for some nonnegative integers $l_1$, $l_2$, we have $\deg(\alpha_l) = pl_1$ and $\deg(\beta_l) = pl_2$. However, $\alpha_l$ is of degree $l-j$ and $\beta_l$ is of degree $l-j-1$ where $j$ is the degree of the original common factors between the numerator and denominator of $a$. In particular, $\deg(\alpha_l)$ and $\deg(\beta_l)$ are comprime integers unless $j=l-1$. In this case, however, $\deg(\alpha_l) = 1$, so it can't be a $p$th power. Finally to apply Theorem \ref{thm:Capelli}, we must check that $a \neq -4b^4$ when $4 \mid 2m$. However, the analogous degree considerations show that $-a/4$ cannot be a fourth power. Then we apply Gauss's lemma to conclude that $f_l(R,Z^m)$ is irreducible as an element of $\C[R,Z]$.
\end{proof}
\begin{lem} \label{lem:changeOfVarIrreducibility}
	Let $g_l(R,Z)$ be as above. Then for all positive integers $m$, $g(R,Z^m)$ is irreducible in $\Q[R,Z]$.
\end{lem}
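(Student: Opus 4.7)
The plan is to show that for a root $\theta$ of $g_l(R,Z^m)$ one has $[K(\theta):K] = 4m$, where $K = \Q(R)$, so that $g_l(R,Z^m)$ is irreducible in $K[Z]$; Gauss's lemma then transfers the conclusion to $\Q[R,Z]$. Setting $\phi = \theta^m$, which is a root of $g_l(R,Z)$, the multiplicativity
\[
	[K(\theta):K] = [K(\theta):K(\phi)] \cdot [K(\phi):K] \leq m \cdot 4
\]
reduces the problem to two claims: (A) $g_l(R,Z)$ is irreducible over $K$, so $[K(\phi):K] = 4$; and (B) $Z^m - \phi$ is irreducible over $K(\phi)$, so $[K(\theta):K(\phi)] = m$.

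For (A), I would first factor $g_l$ over $L = K(\alpha)$ with $\alpha^2 = A/B$ (writing $f_l(R,Z) = A - B Z^2$ as in Proposition \ref{prop:f_tFormula}, with $A$ monic of degree $l$ and $B$ monic of degree $l-1$) to obtain
\[
	g_l(R,Z) = -B(Z^2 - \alpha Z + 1)(Z^2 + \alpha Z + 1);
\]
that $\alpha\notin K$ is the $m=1$ case of Lemma \ref{lem:poweredUpCharacterVarietyIrreducibility}. Each quadratic factor has discriminant $(A-4B)/B$; expanding $(p+q\alpha)^2$ and matching terms shows this can be a square in $L$ only if one of $B(A-4B)$ or $A(A-4B)$ is a square in $\Q[R]$, and neither is: $B(A-4B)$ has odd degree $2l-1$, while $A(A-4B)$ evaluated at $R=2$ equals $-(2l+1)$ (using $f_l(2,2) = -\Delta_{K_l}(1) = -1$ from Lemma \ref{lem:constantTerm}), a negative integer. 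Thus $\delta := \sqrt{(A-4B)/B}\notin L$, so $M := L(\delta)$ has $[M:K] = 4$; picking a root $\beta_+$ of $Z^2 - \alpha Z + 1$, one verifies $\alpha = \beta_+ + \beta_+^{-1}$ and $\delta = \beta_+ - \beta_+^{-1}$, giving $K(\beta_+) = M$ and forcing $g_l(R,Z)$ to be the minimal polynomial of $\beta_+$ up to a unit, hence irreducible.

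For (B), take $\phi = \beta_+$, so $K(\phi) = M$, and apply Capelli's theorem (Theorem \ref{thm:Capelli}): it suffices to show that $\beta_+$ is not a $p$-th power in $M$ for any prime $p \mid m$, and not of the form $-4c^4$ in $M$ when $4 \mid m$. The plan is a valuation computation at $R = \infty$: because $v_\infty^K(A/B) = -1$, the place $\infty$ ramifies in $L/K$ with index $2$; Hensel's lemma applied to $(A-4B)/B = R - 4 + O(1/R)$ (whose unit part has square constant term $1$) places $\delta$ in the completion $L_{v_\infty^L}$, so $\infty$ splits in $M/L$. At the place of $M$ where the leading terms of $\alpha$ and $\delta$ have matching sign, $\beta_+ = (\alpha + \delta)/2$ acquires valuation $-1$. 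Since $-1$ is not divisible by any prime $p \geq 2$ nor by $4$, Capelli's conditions are satisfied and $Z^m - \beta_+$ is irreducible over $M$ for every $m$.

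The main obstacle, I expect, is step (B): one must track the ramification behaviour of $\infty$ through the tower $K \subset L \subset M$ carefully enough to isolate a place where $\beta_+$ has odd valuation. Once that single valuation computation is in hand it handles every $m$ uniformly, and combined with (A) and the degree multiplicativity above it yields $[K(\theta):K] = 4m$, so $g_l(R,Z^m)$ is irreducible in $K[Z]$; Gauss's lemma then upgrades this to irreducibility in $\Q[R,Z]$, applicable because $\gcd(A,B)$ is a unit in $\Q[R]$---an immediate consequence of $f_l$ being irreducible in $\Q[R,Z]$.
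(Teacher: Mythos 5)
Your proof is correct, and it takes a genuinely different route from the paper's. The paper's argument is short and combinatorial: it first rules out a factor of $R$-degree $0$ by comparing the $R$-leading coefficient $Z^{2m}$ with the $R$-constant term (for $l\equiv 1\Mod 4$ they are coprime; for $l\equiv 3\Mod 4$ one checks $Z\nmid g_l$ directly from Lemma~\ref{lem:g_lFormula}), and then, since the $R$-leading coefficient is a power of $Z$, any factorization into positive-$R$-degree pieces survives the specialization $Z=1$, giving a nontrivial factorization of $g_l(R,1)=f_l(R,2)$---which contradicts the irreducibility result of \cite{HosteShanahanTwistKnots}. Your proof instead computes the degree $[K(\theta):K]=4m$ directly over $K=\Q(R)$ via the tower $K\subset L=K(\alpha)\subset M=L(\delta)=K(\beta_+)$, using the discriminant and degree/sign obstructions to get $[M:K]=4$, and then Capelli's theorem with a valuation computation at $\infty$ to handle $Z^m-\beta_+$. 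The trade-off is clear: the paper outsources the heart of the matter to Hoste--Shanahan and finishes in a few lines, whereas your argument is longer but entirely self-contained given the structural facts from Proposition~\ref{prop:f_tFormula}, Lemma~\ref{lem:constantTerm}, and Lemma~\ref{lem:poweredUpCharacterVarietyIrreducibility}. One small imprecision: the display ``$(A-4B)/B=R-4+O(1/R)$'' is not quite right, since the constant term is $-3-b_{l-2}$ rather than $-4$; but the argument you actually need is $\alpha^2-4=\alpha^2(1-4\alpha^{-2})$, whose unit part is $\equiv 1$ modulo the maximal ideal of $L_{v_\infty^L}$, and that step is sound as you use it.
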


\begin{proof}[Proof of Lemma \ref{lem:changeOfVarIrreducibility}]
	Fix a positive integer $m$ and suppose that $g_l(R,Z^m) = G_1(R,Z)G_2(R,Z)$ where $G_i(R,Z) \in \Q[R,Z]$ of positive $R$-degree. Note that there cannot be a factorization involving a term of $R$-degree $0$ since---in the case that $l \equiv 1 \Mod{4}$---the $R$-leading coefficient, $Z^{2m}$ is coprime with the $R$-degree zero term, $-Z^{4m}-Z^{2m}-1$. When $l\equiv 3\Mod{4}$, the gcd of the leading and constant terms is $Z^{2m}$, but reducing modulo $Z$ produces $-\sum_{i=1}^{l-1}b_iR^i$, which is nonzero by Lemma \ref{lem:g_lFormula}. Since the $R$-leading coefficient of $g_l(R,Z^m)$ is $Z^{2m}$, specializing $Z=1$ produces a nontrivial factorization in $\Q[R]$. However, $g_l(R,1^m) = f_l(R,1^m+1^{-m}) = f_l(R,2)$, which is irreducible by \cite{HosteShanahanTwistKnots}.
\end{proof}
Now we prove
\begin{lem} \label{lem:gAbsolutelyIrreducible}
	Let $g_l$ be as above. Then $g_l(R,Z^m)$ is absolutely irreducible for all positive integers $m$.
\end{lem}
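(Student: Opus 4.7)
The plan is to factor $g_l(R, Z^m)$ over an appropriate quadratic extension and show the factorization cannot be refined. Writing $f_l(R, Y) = \alpha(R) - Y^2\beta(R)$ with $\alpha, \beta \in \Z[R]$ as in Proposition \ref{prop:f_tFormula} (both monic, of degrees $l$ and $l-1$ respectively), a direct computation gives $g_l(R, Z^m) = \tilde{g}(R, Z^{2m})$, where
\[
\tilde{g}(R, U) = -\beta(R)\, U^2 + \bigl(\alpha(R) - 2\beta(R)\bigr) U - \beta(R).
\]
The $U$-discriminant of $\tilde g$ is $\alpha(\alpha - 4\beta)$. Lemma \ref{lem:poweredUpCharacterVarietyIrreducibility} with $m=1$ forces $\gcd(\alpha, \beta) = 1$, hence also $\gcd(\alpha, \alpha - 4\beta) = 1$; since $\deg\alpha = l \geq 3$ is odd, this product is not a square in $\overline{\Q}[R]$. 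Thus $\tilde g$ is absolutely irreducible, and over the quadratic extension $L \coloneqq \overline{\Q}(R)\bigl(\sqrt{\alpha(\alpha - 4\beta)}\bigr)$ we obtain the splitting $g_l(R, Z^m) = -\beta\,(Z^{2m} - T_+)(Z^{2m} - T_-)$ with $T_+ T_- = 1$.

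The core step is to apply Capelli's theorem (Theorem \ref{thm:Capelli}) to each factor $Z^{2m} - T_\pm$ in $L[Z]$: one must show that $T_+$ is not a $p$-th power in $L$ for any prime $p \mid 2m$, and is not of the form $-4b^4$ if $4 \mid 2m$ (and likewise for $T_-$). I would verify both conditions by a divisor computation on the smooth projective model of $L$. The corresponding double cover is unramified above $R = \infty$ since $\deg \alpha(\alpha - 4\beta) = 2l$ is even, giving two points $\infty_\pm$. Using the expansions $\alpha = R^l + R^{l-1} + O(R^{l-2})$ and $\beta = R^{l-1} + O(R^{l-2})$ from Proposition \ref{prop:f_tFormula}, one computes $\sqrt{\alpha(\alpha - 4\beta)} = \pm\bigl(R^l - R^{l-1} + O(R^{l-2})\bigr)$. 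On the ``$+$''-sheet the numerator $\alpha - 2\beta + \sqrt{\alpha(\alpha - 4\beta)}$ equals $2R^l + O(R^{l-1})$ while the denominator $2\beta$ is $2R^{l-1} + O(R^{l-2})$, so $T_+$ has a simple pole at $\infty_+$; on the ``$-$''-sheet the leading two terms cancel and the numerator becomes $2R^{l-2} + O(R^{l-3})$, giving $T_+$ a simple zero at $\infty_-$. The divisor $(T_+)$ therefore has a component of multiplicity $1$, which rules out both the $p$-th-power and $-4b^4$ obstructions in Capelli's theorem.

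Finally, the irreducible factors $Z^{2m} - T_\pm \in L[Z]$ are interchanged by the nontrivial element of $\Gal(L/\overline{\Q}(R))$, so their product lies in $\overline{\Q}(R)[Z]$ and is irreducible, by the standard fact that an irreducible polynomial over a quadratic extension, distinct from its conjugate, combines with its conjugate into an irreducible of double the degree over the base field. Since the $Z$-content of $g_l(R, Z^m)$ in $\overline{\Q}[R]$ equals $\gcd(\beta, \alpha - 2\beta) = \gcd(\alpha, \beta) = 1$, Gauss's lemma promotes irreducibility in $\overline{\Q}(R)[Z]$ to absolute irreducibility in $\overline{\Q}[R, Z]$. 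The main obstacle is the divisor computation on the ``$-$''-sheet at $\infty$, where the cancellation of the top two terms forces one to expand to order $R^{l-2}$ before the leading nonvanishing term appears.
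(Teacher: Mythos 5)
Your argument is correct, but it takes a genuinely different route from the paper's. The paper's proof first establishes rational irreducibility of $g_l(R,Z^m)$ (Lemma \ref{lem:changeOfVarIrreducibility}, via specializing $Z=1$), and then upgrades to absolute irreducibility using the Bertone--Ch\`eze--Galligo Newton polygon criterion (Theorem \ref{thm:BCG}): the vertices $(0,t-1)$, $(2m,t)$, $(4m,t-1)$ of the polygon have coordinate gcd $1$, so any rationally irreducible polynomial with that polygon is absolutely irreducible. You bypass the rational irreducibility lemma entirely: writing $g_l$ as a quadratic in $U=Z^{2m}$ over $\overline{\Q}(R)$, you split it over the quadratic extension $L = \overline{\Q}(R)\bigl(\sqrt{\alpha(\alpha-4\beta)}\bigr)$, apply Capelli's theorem (Theorem \ref{thm:Capelli}) to each binomial factor $Z^{2m}-T_\pm$, and then descend via Galois conjugation and Gauss's lemma. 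The one delicate step --- ruling out the $p$-th-power and $-4b^4$ obstructions in Capelli --- is handled correctly by your order computation at $\infty_\pm$: the cover is unramified over $\infty$ since $\deg\alpha(\alpha-4\beta)=2l$ is even (so the squarefree part also has even degree), and the cancellation of the top two terms on the minus sheet leaves $2R^{l-2}$ in the numerator, giving $\mathrm{ord}_{\infty_+}(T_+)=-1$, which is incompatible with $T_+$ being a $p$-th power for any $p\geq 2$ or of the form $-4b^4$. Your approach trades the BCG criterion for some function-field bookkeeping and makes Lemma \ref{lem:changeOfVarIrreducibility} unnecessary for this conclusion; the paper's proof is shorter once BCG is admitted, and both ultimately rest on Capelli, which the paper applies for $f_l$ in Lemma \ref{lem:poweredUpCharacterVarietyIrreducibility} and you apply a second time over $L$.
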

We apply the following result of Bertone, Ch\`eze, and Galligo.
\begin{thm}[{\cite[Proposition 3]{BCGfactorization}}] \label{thm:BCG}
	Let $k$ be a field and $g(R,Z) \in k[R,Z]$ be an irreducible polynomial. Let $\{(i_1, j_1), \dots, (i_l, j_l)\} \subset \Z^2$ be the vertex set of its Newton 
	polygon. If $\mathrm{gcd}(i_1, j_1, \dots,i_l, j_l) = 1$, then $f(R,Z)$ is irreducible over $\overline{k}$.
\end{thm}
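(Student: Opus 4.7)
The plan is to apply Theorem~\ref{thm:BCG} with $k=\Q$, taking advantage of Lemma~\ref{lem:changeOfVarIrreducibility}, which already gives that $g_l(R,Z^m)$ is irreducible in $\Q[R,Z]$. What remains is purely combinatorial: identify enough vertices of the Newton polygon of $g_l(R,Z^m)$ to pin down the gcd condition required by the theorem.

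First I would read off the support of $g_l(R,Z^m)$ directly from Lemma~\ref{lem:g_lFormula}. Substituting $Z\mapsto Z^m$ in the formula, the terms that are certain to appear (because their coefficients are $\pm 1$, not a generic $a_i,b_i$ that could vanish) are: the leading $R$-term $Z^{2m}R^l$, giving the support point $(l,2m)$; the $R^{l-1}$-block $(a_{l-1}Z^{2m}-b_{l-1}(Z^{4m}+2Z^{2m}+1))R^{l-1}$ with $a_{l-1}=b_{l-1}=1$, giving support points $(l-1,0)$, $(l-1,2m)$, $(l-1,4m)$; and, from the $R^0$-term, the points $(0,0)$, $(0,2m)$, $(0,4m)$ when $l\equiv 1\pmod 4$, or just $(0,2m)$ when $l\equiv 3\pmod 4$.

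Next I would argue that $(l,2m)$ and $(l-1,0)$ are both vertices of the Newton polygon $N$ of $g_l(R,Z^m)$. The point $(l,2m)$ is the unique point in the support with first coordinate $l$, so it is clearly extremal. For $(l-1,0)$: no other support point has second coordinate $0$ with first coordinate $\geq l-1$ (the only such candidate would be $(l,0)$, which is not in the support), and the points $(i,0)$ for $i<l-1$ together with $(l,2m)$ all lie strictly on one side of the supporting line through $(l-1,0)$ of slope $2m$. Hence $(l-1,0)$ is a vertex of $N$. Since both $(l,2m)$ and $(l-1,0)$ are among the vertices, the gcd of all vertex coordinates divides $\gcd(l,l-1)=1$, so the hypothesis of Theorem~\ref{thm:BCG} is met.

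Finally, combining Lemma~\ref{lem:changeOfVarIrreducibility} (irreducibility of $g_l(R,Z^m)$ over $\Q$) with Theorem~\ref{thm:BCG} yields irreducibility over $\overline{\Q}$, i.e.\ absolute irreducibility, which is what the lemma asserts. The only delicate point is the Newton-polygon analysis: I have to be sure that $(l-1,0)$ really is a vertex and not an interior point of $N$, which is why I pick that particular point (its extremality follows just from the supports of the $R^{l-1}$ and $R^l$ blocks, independently of which of the $a_i,b_i$ for $i<l-1$ happen to vanish). This is the only step that requires more than a one-line check.
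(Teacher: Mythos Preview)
Your proposal is not a proof of the stated result. Theorem~\ref{thm:BCG} is a cited result from \cite{BCGfactorization}; the paper does not prove it, and neither do you. What you have written is instead a proof of Lemma~\ref{lem:gAbsolutelyIrreducible}, which \emph{applies} Theorem~\ref{thm:BCG}. Assuming that is what you intended, your argument is correct and follows essentially the same route as the paper.

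The paper, like you, combines the rational irreducibility from Lemma~\ref{lem:changeOfVarIrreducibility} with the Newton-polygon criterion of Theorem~\ref{thm:BCG}. The paper identifies the three ``top'' vertices $(t-1,0)$, $(t,2m)$, $(t-1,4m)$ (in your $(R,Z)$-degree convention) and uses $\gcd(t,t-1)=1$; you identify two of these same vertices, $(l,2m)$ and $(l-1,0)$, and use the same gcd computation. Two vertices already suffice, so this is a cosmetic difference only. Your justification that $(l-1,0)$ is genuinely a vertex is slightly informal (the line of slope $2m$ you mention also passes through $(l,2m)$, so it exhibits an edge rather than isolating the vertex), but the claim is correct: $(l-1,0)$ uniquely maximizes, say, the linear functional $(x,y)\mapsto x-ly$ over the support, since every support point other than $(l,2m)$ has $x\le l-1$ and $y\ge 0$, while $(l,2m)$ gives $l-2ml<l-1$. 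So the only adjustment needed is either to tighten that sentence or to match your write-up to the actual target statement.
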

\begin{proof}[Proof of Lemma \ref{lem:gAbsolutelyIrreducible}]
	\begin{figure}[h]
		\includegraphics[height=0.25\textheight]{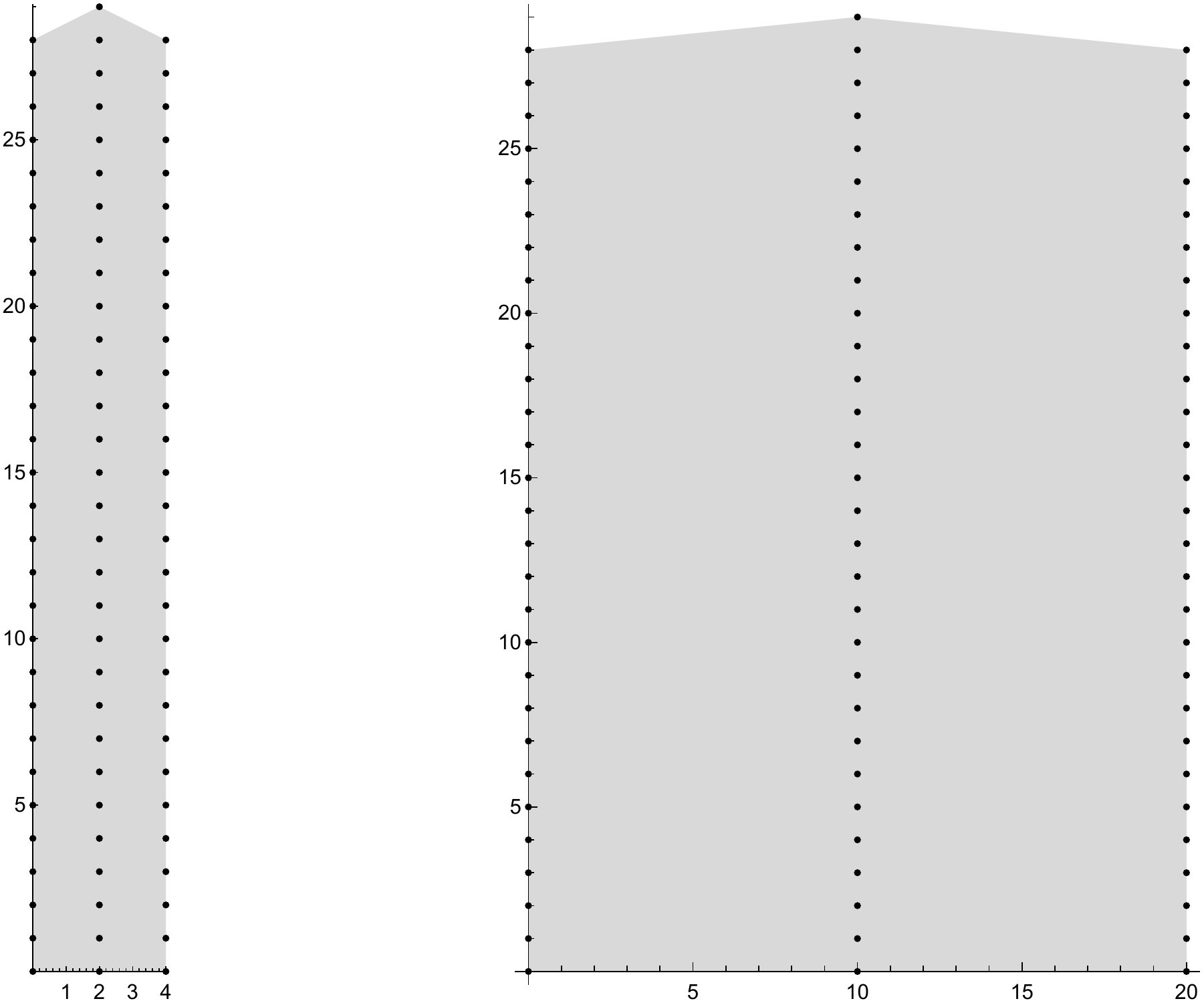}
		\caption{The Newton polygons for $g_{29}(R,Z)$ and $g_{29}(R^5,Z)$.}
		\label{fig:NewtonPolygon}
	\end{figure}
	Note that the coefficients of $R^{t-1}$, $Z^2R^t$, and $Z^4R^{t-1}$ in $g_t(R,Z)$ are all nonzero. Moreover, the coefficient of $R^t$ and $Z^4R^t$ is zero, and there are no terms with $Z$ to an odd power or a power greater than $4$. All this implies that the top of the Newton polygon (with $Z$ and $R$ the horizontal and vertical directions respectively) looks like $(0, t-1)$, $(2, t)$, $(4, t-1)$. See Figure \ref{fig:NewtonPolygon} for the Newton polygon of $g_{13}(R,Z)$. These lattice points alone are sufficient to apply Theorem \ref{thm:BCG} together with the rational irreducibility furnished by Lemma \ref{lem:changeOfVarIrreducibility} to conclude that $g_t(R,Z)$ is absolutely irreducible. To apply theorem \ref{thm:DZ}, we must further show that $g_t(R,Z^m)$ is irreducible for all $m \leq \deg_R(g_t) = t$. Indeed, $g_t(R,Z^m)$ is irreducible over $\Q$ for all positive integers $m$. Increasing the power on $Z$ has the effect of horizontally stretching the Newton polygon so that its top consists of the lattice points $(0, t-1)$, $(2m, t)$, $(4m, t-1)$. The coordinates of these points still have $t$ and $t-1$, which are coprime, so Theorem \ref{thm:BCG} still applies, so we find that $g_t(R, Z^m)$ is absolutely irreducible for all positive integers $m$. Then Theorem \ref{thm:DZ} applies so that $g_t(R, \zeta_n)$ is irreducible in $\Q^{ab}[R]$, and hence $f_t(R, 2\cos(2\pi/n))$ is as well.
\end{proof}
\begin{proof}[Proof of Lemma \ref{lem:relativeNorm}]
	We may find a monic polynomial that $r_d$ satisfies by specializing the character variety defining polynomial $f_t(R,Z)$ to $f_t(2-r,2\cos(2\pi/d))$. The resulting polynomial in $\Q(\zeta_d^+)[r]$ is irreducible by Proposition \ref{prop:irreducibility}, so $f_t(2-r,2\cos(2\pi/d))$ is the minimal polynomial for $r_d$. Moreover, its constant term coincides with that of $f_t(2,2\cos(2\pi/d))$, which is $-\Delta_{K_t}(\zeta_d)$ by Lemma \ref{lem:constantTerm}. The norm of $r_d$ is then $\Delta_{K_t}(\zeta_d)$ as the norm is the negative of the constant term of the minimal polynomial when the degree of the minimal polynomial is odd.
\end{proof}
\section{Example: \texorpdfstring{$T_{29}$}{T29}}
In this section we wish to show what ramification can be predicted for surgeries on the knot $T_{29}$ by unpacking the proof of Theorem \ref{thm:main}. We remark that for all but the smallest surgeries, it seems totally intractable to use any sort of software to na\"ively compute the ramification. One can use the character variety to get exact minimal polynomials for the trace fields and entries for the quaternion algebra in terms of this minimal polynomial, but computing reduction modulo primes (i.e. trying to apply Theorem \ref{thm:MRlocalSymbol}) involves computing a maximal order in the field, which usually involves factoring a large discriminant. For example, the discriminant of a minimal polynomial for the $(11,0)$ surgery on $T_{29}$ has $359$ decimal digits, which exceeds the record for the largest factored integer not of a special form by over $100$ decimal digits. We will not dwell on the exact nature of the computational complexities involved, but instead use some of the theoretical results in the paper to prove ramification. \par
Let us first mention that the canonical component is of the following form.
\[
\begin{aligned}
f_{29}(R,Z) &=-R^{28} Z^{2} + R^{29} + R^{27} Z^{2} + R^{28} + 26 R^{26} Z^{2} - 28 R^{27} - 25 R^{25} Z^{2} - 27 R^{26} - 301 R^{24} Z^{2} \\ 
&\qquad+ 351 R^{25} + 277 R^{23} Z^{2} + 325 R^{24} + 2046 R^{22} Z^{2} - 2600 R^{23} - 1792 R^{21} Z^{2} - 2300 R^{22} \\ 
&\qquad- 9066 R^{20} Z^{2} + 12650 R^{21} + 7506 R^{19} Z^{2} + 10626 R^{20} + 27492 R^{18} Z^{2} - 42504 R^{19} \\ 
&\qquad- 21335 R^{17} Z^{2} - 33649 R^{18} - 58277 R^{16} Z^{2} + 100947 R^{17} + 41941 R^{15} Z^{2} + 74613 R^{16} \\ 
&\qquad+ 86662 R^{14} Z^{2} - 170544 R^{15} - 57044 R^{13} Z^{2} - 116280 R^{14} - 89402 R^{12} Z^{2} + 203490 R^{13} \\ 
&\qquad+ 52834 R^{11} Z^{2} + 125970 R^{12} + 62292 R^{10} Z^{2} - 167960 R^{11} - 32206 R^{9} Z^{2} - 92378 R^{10} \\ 
&\qquad- 27966 R^{8} Z^{2} + 92378 R^{9} + 12174 R^{7} Z^{2} + 43758 R^{8} + 7476 R^{6} Z^{2} - 31824 R^{7} - 2576 R^{5} Z^{2} \\ 
&\qquad- 12376 R^{6} - 1036 R^{4} Z^{2} + 6188 R^{5} + 252 R^{3} Z^{2} + 1820 R^{4} + 56 R^{2} Z^{2} - 560 R^{3} - 7 R Z^{2} - 105 R^{2} \\ 
&\qquad- Z^{2} + 15 R + 1.
\end{aligned}
\]
Let us first consider the $(11,0)$ surgery on $T_{29}$, so our quaternion algebra is of the form
\[
\HilbertSymbol{2\cos(2\pi/11)-2}{-r_{11}}{k_{11}}.
\]
Note that this is not actually the quaternion algebra associated to the $(11,0)$ surgery, but instead the specialization of the Azumaya negative part (see Proposition \ref{prop:HilbertSymbolForKnot} for details). However, all ramification shown in this example will actually be associated to the quaternion algebra for the Kleinian group. This basically amounts to checking that the Azumaya positive part doesn't have any of the same ramification. We also remark that $k_{11}$ is a degree $145$ extension of the $\Q$. Without knowing something about the ring of integers (e.g. the conductor of $\Z[r_d]$ in it), it is difficult to explicitly establish the splitting of primes in this extension. However, applying Lemma \ref{lem:HilbertSymbolTotallyReal}, one may deduce the existence of a ramified prime. In particular, we first need to prove that some prime $\mathfrak{L}$ of $k_{11}$ divides $-r_{11}$ an odd number of times. This is a problem amenable to software. Indeed, $N_{k_{11}/\Q}(-r_{11}) = 43\cdot131\cdot1033$. Since these all appear to the first power, there is no worry of a prime dividing $-r_{11}$ multiple times. Now, we check the other criterion for these rational primes, namely that any prime of $\Q(\zeta_{11})^{+}$ above them does not split in $\Q(\zeta_{11})$. Since the tower of extensions $\Q(\zeta_d)/\Q(\zeta_d)^{+}/\Q$ is Galois, each prime above a particular rational prime has the same splitting behavior. We may check that each of $43$, $131$, and $1033$ satisfy the conditions of Lemma \ref{lem:HilbertSymbolTotallyReal}, so there is a prime in $k_{11}$ above each of these rational primes such that the quaternion algebra for $T_{29}(11,0)$ is ramified at that prime. This procedure can be implemented on a computer; after doing so, we find the residue characteristics listed in Table \ref{tab:residueChars} of ramified primes for the $(d,0)$ surgery.
\begin{table} 
\centering
	\caption{Ramified residue characteristics for $(d,0)$ surgery on $T_{29}$.}
	\label{tab:residueChars}
	\begin{tabular}{c|c c@{\hspace{0.25in}}c|c}
		$d$ & primes & & $d$ & primes  \\
		\cline{1-2} \cline{4-5} 
		5 &  $\varnothing$               & & 53 &  42611, 60101 \\
		7 & 13                           & & 55 &  $\varnothing$ \\     
		9 & 431                          & & 57 &  12539, 56706539232099509                \\       
		11 & 43,131,1033                 & & 59 &  1228786844647                \\         
		13 & 1117, 1481                  & & 61 &  35711295669608681, 41553136798440921281 \\
		15 & 149, 179                    & & 63 &  $\varnothing$          \\      
		17 & 67, 101, 509, 4657          & & 65 &  4679, 656305837760821827656999          \\
		19 & 37                          & & 67 &  401       \\             
		21 & $\varnothing$               & & 69 & $\varnothing$                   \\        
		23 & 10938592571969              & & 71 & 283               \\          
		25 & 90636599549                 & & 73 & 5503792674161   \\
		27 & $\varnothing$               & & 75 & 2699, 15299                   \\        
		29 & 292319                      & & 77 & 5196259971209 \\
		31 & $\varnothing$               & & 79 & 157, 80263                         \\
		33 & 659, 24800291               & & 81 & 314974336585075469        \\ 
		35 & 25409                       & & 83 & 74201, 33552749, 27639164173, 19501822788835693 \\
		37 & 73, 294149, 531516948137827 & & 85 & 123419, 4093091532209, 16729850810909 \\
		39 & 35883041                    & & 87 & 7386583213044449, 65955561202472999   \\
		41 & 4271162617                  & & 89 & 1601                          \\
		43 & 3697, 107069                & & 91 & 42223, 122828797084811                        \\
		45 & 89                          & & 93 & 929, 46197763017488779460706369300779   \\
		47 & $\varnothing$               & & 95 & 569, 145349, 153862768739      \\
		49 & 97                          & & 97 & 79151, 149328007, 3899539084760806682641718399966621 \\
		51 & 1055801, 823976217011       & & 99 & 2970791, 3683326481, 9934540457447231.
	\end{tabular}
\end{table} \par
The above discussion is meant to illustrate that results in the paper allow relatively easy computation of the residue characteristics of ramified primes for $(d,0)$ surgeries. Of course, Theorem \ref{thm:main} indicates that there are in fact infinitely many distinct such residue characteristics. We now show how one can unpack the methods of the proof to find a subsequence of surgery coefficients that provides the infinitely many distinct residue characteristics. We can summarize previous work in the paper by saying that we ultimately want to find $d$ such that
\[
\left| N_{\Q(\zeta_d)^+/\Q}(\Delta_{T_{29}}(\zeta_d^2)) \right| \not\equiv 1 \Mod{15}.
\]
The detected surgery coefficients for this example are of the form $3^u5^v$. To simplify the explanation for this example, let us define a function
\[
\omega(n) = \prod_{d \mid n} N_{\Q(\zeta_d)^+/\Q}\left(\Delta_{T_{29}}(\zeta_d^2)\right).
\]
Lemma \ref{lem:residueClassSplitResultant} implies that $\omega(n) \equiv 1 \Mod{15}$. The idea is then that if $n_0 \mid n_1$ and $\sigma(n_0)$ and $\sigma(n_1)$ have different signs, then $\abs{\sigma(n_1)/\sigma(n_0)} \not\equiv 1 \Mod{pq}$, so there is a divisor $d_1$ of $n_1$ that does not divide $n_0$ such that 
\[
\left| N_{\Q(\zeta_{d_1})^+/\Q}(\Delta_{T_{29}}(\zeta_{d_1^2})) \right| \not\equiv 1 \Mod{15}.
\]
Then the problem is just reduced to showing that for any $n_0$, we can find a $n_1$ as above. For the knot $T_{29}$, this amounts to asking whether---given powers $u$ and $v$---one can find powers $u' > u$ and $v' > v$ such that $\omega(3^{u'}5^{v'})$ has a different sign from that of $\omega(3^u5^v)$. We prove that one can always do this by using Furstenberg's theorem on nonlacunary semigroups (see Theorem \ref{thm:nonlacunary}). However, for the present example, we just compute a few cases to show how one might go about constructing infinitely many distinct residue characteristics. \par
For technical reasons, we actually need even powers of $3$. So our first $n_0$ to try is $3^25 = 45$. $\omega(45)$ is negative, so $\abs{\omega(45)}$ has a divisor that works. In fact, consulting the table above, there are three divisors $d$ of $45$ so that the $(d,0)$ surgery has finite ramification, namely $9$, $15$, and $45$. Call one of them $d_0$. The smallest $n_1$ such that $\omega(n_1) > 0$ is $n_1 = 3^25^5 = 28125$. So there must be some divisor $d_1$ of $28125$ that doesn't divide $45$ (and hence $d_0$) such that the $(d_1,0)$ surgery has finite ramification, for example $75$. Next, if we set $n_2 = 3^25^6$, then $\omega(n_2) < 0$, so we may find $d_2$ in the same way. The numbers involved quickly become intractably large even for software, but results in the paper guarantee that proceeding in this manner produces an infinite sequence of $(d_i,0)$ surgeries with finite ramification. Moreover, because of Lemma \ref{lem:multiplicativeOrder}, we can't find the same residue characteristic infinitely often, so this process will in fact produce infinitely many distinct rational primes such that for each one, there is an integer $d$ and a prime of the trace field of the $(d,0)$ surgery above that prime that ramifies the quaternion algebra associated to the $(d,0)$ surgery.

\printbibliography
\end{document}